\newcommand{\quash}[1]{}
\numberwithin{equation}{section}
\newtheorem{defin}{Definition}[section]
\newtheorem{prop}{Proposition}[section]
\newtheorem{nt}{Remark}[section]
\newtheorem{Th}{Theorem}[section]
\newtheorem{cons}{Corollary}[section]
\newtheorem{defin-prop}{Definition-proposition}[section]
\newfont{\ssdbl}{msbm8}
\newfont{\sdbl}{msbm9}
\newfont{\dbl}{msbm10 at 12pt}
\newcommand{\oo}{{\cal O}}
\newcommand{\ff}{{\cal F}}
\newcommand{\g}{{\cal G}}
\newcommand{\res}{\mathop {\rm res}}
\newcommand{\Hom}{\mathop {\rm Hom}}
\newcommand{\Aut}{\mathop {\rm Aut}}
\newcommand{\tr}{\mathop {\rm tr}}
\newcommand{\Spec}{\mathop {\rm Spec}}
\newcommand{\dz}{\mathbb{Z}}
\newcommand{\dc}{\mathbb{C}}
\newcommand{\dr}{\mathbb{R}}
\newcommand{\Z}{\dz}
\newcommand{\ph}{\varphi}
\newcommand{\Ker}{{\rm Ker}\:}
\newcommand{\Image}{{\rm Im}\:}
\newcommand{\lrto}{\longrightarrow}
\newcommand{\M}{{\cal M}}
\def\Z{{\mathbb Z}}
\def\Q{{\mathbb Q}}
\newcommand{\LL}{{\mathcal L}}
\newcommand{\RR}{{\mathcal R}}
\newcommand{\AutL}{{   {\mathcal Aut}^{\rm c, alg} ({\mathcal L} )}}
\newcommand{\AutLm}{{   {\mathcal Aut}_{-}^{\rm c, alg} ({\mathcal L} )}}
\newcommand{\AutLmo}{{   {\mathcal Aut}_{-,1}^{\rm c, alg} ({\mathcal L} )}}
\newcommand{\AutLp}{{   {\mathcal Aut}_{+}^{\rm c, alg} ({\mathcal L} )}}
\newcommand{\AutLpo}{{   {\mathcal Aut}_{+,1}^{\rm c, alg} ({\mathcal L} )}}
\newcommand{\Lie}{\mathop {\rm Lie}}
\newcommand{\GG} {{\mathbb G}}
\newcommand{\CC}{\mathop{\rm CC}}
\newcommand{\GL}{{\mathop{\rm GL}}}
\newcommand{\ve}{\varepsilon}
\begin{document}

\author{
D. V. Osipov
}

\title{Formal Bott-Thurston cocycle and part of a formal Riemann-Roch theorem  \thanks{The author was supported by the Basic Research Program of the National Research University Higher
School of Economics.}}
\date{}

\maketitle

{\begin{center} {\em  Dedicated to the memory of  A.~N.~Parshin}
\end{center}}

\begin{abstract}
The Bott-Thurston cocycle is a  $2$-cocycle on the group of orientation-preserving diffeomorphisms of the circle.
We introduce and study a formal analog of Bott-Thurston cocycle. The formal Bott-Thurston cocycle is a $2$-cocycle on the group of continuous $A$-automorphisms of the algebra $A((t))$ of Laurent series over a commutative ring $A$ with values in the group $A^*$ of invertible elements of $A$.
We prove that the  central extension given by the formal Bott-Thurston cocycle  is equivalent to the $12$-fold Baer sum of the determinantal central extension when $A$ is a $\Q$-algebra.
As a consequence of this result we prove a part of new formal Riemann-Roch theorem. This Riemann-Roch theorem is applied to  a ringed space on a separated scheme $S$ over $\Q$, where the structure sheaf of the ringed space is locally on $S$ isomorphic to the sheaf $\oo_S((t))$ and the transition automorphisms are continuous. Locally on $S$ this ringed space corresponds to the punctured formal neighbourhood of a section of a  smooth morphism to $U$ of relative dimension $1$, where an open subset $U \subset S$.
\end{abstract}

\section{Introduction}

The goal of this paper is to relate the Bott-Thurston cocycle and a part of the Riemann-Roch theorem at the formal level.

\subsection{Bott-Thurston cocycle}  \label{BT}

We first describe, what is the Bott-Thurston cocycle. We recall the following construction from Bott's paper~\cite{B1}.
Let $M$ be a smooth compact oriented $n$-dimensional manifold, and $\Gamma$ be a group which acts on the right on $M$ by orientation-preserving diffeomorphisms. We consider the Abelian group $\dr_+$ of positive real numbers with respect to multiplication.

We note that because we supposed that $\Gamma$ acts on $M$ on the right, $\Gamma$ acts on the Abelian group $C^{\infty}(M, \dr_+)$ of smooth functions from $M$ to $\dr_+$  on the left by the rule
$$
h(f)(x )=f (x h)  \, \mbox{,}
$$
where $h \in \Gamma$, $f \in C^{\infty}(M, \dr_+)$, $x \in M$.

Let $\omega$ be a volume form on $M$. For any $h \in \Gamma$ we define  $\mu(h)  \in C^{\infty}(M, \dr_+)$  in the following way
\begin{equation}  \label{mu}
\mu(h) = \frac{h^*(\omega)}{ \omega}    \mbox{.}
\end{equation}

Now, an $n+1$-cocycle $B$ such  that its class is  an element from the group $H^{n+1}(\Gamma, \dr)$ is given as
\begin{equation}  \label{Bo}
B(h_1, \ldots, h_{n+1}) = \int_M \log  \mu(\widehat{h_1})  \  d \log \mu(\widehat{h_2})  \wedge  \ldots  \wedge d \log \mu(\widehat{h_{n+1}}) \, \mbox{,}
\end{equation}
where $h_1, \ldots, h_{n+1}  \in \Gamma$, and $\widehat{h_i}= h_1 \ h_2 \ldots h_i$ for $1 \le i \le n$.

From \eqref{mu} it is easy to obtain that  for any $h_1, h_2 \in \Gamma$
\begin{equation}  \label{chain}
\mu (h_1 h_2)= \mu(h_1) \cdot h_1(\mu(h_2))
\end{equation}
which can be seen as the chain rule for differentation. We note that for~\eqref{chain} it is important that $\Gamma$ acts on $M$ on the right.
Besides, formula~\eqref{chain} is the formula for an $1$-cocycle in the group cohomology, i.e. $\mu$ is an $1$-cocycle and its class defines an element from $H^1(\Gamma, C^{\infty}(M, \dr_+))$.

In this paper we will be interested in the formal analog of the cocycle $B$ when $n=1$ and $M =S^1$ is the circle, but see Remark~\ref{higher} below. Then in this situation the cocylce $B$ is called
the Bott-Thurston cocycle (when $\omega = d \theta$, where $\theta$ is a coordinate on the circle $S^1 = \dr/ 2\pi \dz$), and the central extension of the group $\Gamma= {\rm Diff}(S^1)$ of orientation-preserving diffeomorphisms of $S^1$  by the group $\dr$ given by this cocycle is called the Bott-Virasoro group, see, e.g.,~\cite{GR,KW}.

We change slightly the Bott-Thurston cocycle $B$ to the cocycle $\widetilde{B}$ with values in the subgroup $\exp (i \dr) \subset \dc^*$:
$$
\widetilde{B}= \exp \frac{B}{2\pi i} \, \mbox{.}
$$
Then, using formula~\eqref{chain}, we have
$$
\widetilde{B} (h_1, h_2) = \exp \frac{1}{2\pi i} \int_{S^1} \log  \mu(\widehat{h_1})  \  d \log \mu(\widehat{h_2})) =
\exp \frac{1}{2\pi i} \int_{S^1} \log  \mu({h_1})  \ d \log h_1 (\mu(h_2))  \, \mbox{.}
$$
where $h_1, h_2 \in {\rm Diff}(S^1)$. (We recall that we supposed that ${\rm Diff}(S^1)$ acts on $S^1$ on the right.)

\subsection{Interpretation of cocycle via product in group cohomology}  \label{prod-gr-coh}

From the point of view of group cohomology and its product the  cocycle $\widetilde{B}$ can be described in the following way.

We suppose that a group $\Gamma$ acts as usual, i.e. on the left, on an Abelian group $K$, and we will use multiplicative notation for the group laws. We have the group
$H^1(\Gamma, K)$ that is the quotient group of the group of $1$-cocycles by the subgroup of $1$-coboundaries, where $1$-cocycles
$\lambda : \Gamma \to K$ are described by the property:
$$
\lambda(h_1 h_2)= \lambda(h_1) \ h_1(\lambda(h_2))  \, \mbox{,}
$$
where $h_i \in \Gamma$ for $i =1$ and $i =2$.

Then there is a well-defined homomorphism, which is called  $\cup$-product, see~{\cite[ch.~V, \S~3]{Bro}}:
$$
\cup \; :  \; H^1(\Gamma, K)  \otimes_{\Z} H^1(\Gamma, K) \lrto H^2(\Gamma, K \otimes K)
$$
given by means of the tensor product of the standard resolutions (or, in other words, bar-resolutions) and  the diagonal approximation given by the  Alexander-Whitney map, and where $\Gamma$ acts diagonally on $K \otimes_{\Z} K$. Explicitly,  the $\cup$-product gives the  $2$-cocycle from   two $1$-cocycles as follows:
$$
\lambda_1 \otimes \lambda_2   \longmapsto \left\{ (g,h) \mapsto \lambda_1(g)  \otimes g(\lambda_2(h)  \right\}  \, \mbox{,}
$$
where $\lambda_i$ is a $1$-cocycle from $\Gamma$ to $K$  ($i=1$ and $i=2$) and $g,h \in \Gamma$.

Now, suppose that we have  a  homomorphism of Abelian groups
$$
C \; : \; K \otimes_{\Z} K \lrto E
$$
that commutes with the action of $\Gamma$ on $K \otimes_{\Z} K$ and with the trivial action of $\Gamma$ on $E$. Then we obtain a well-defined map
given by the following composition of maps
\begin{equation}  \label{cup-prod}
H^1 (\Gamma, K)  \otimes_{\Z} H^1(\Gamma, K)  \lrto H^2(\Gamma, K \otimes_{\Z} K )  \lrto H^2(\Gamma, E)  \, \mbox{,}
\end{equation}
where the first map is the $\cup$-product and the second map is induced by $C$. Besides, map~\eqref{cup-prod} is written
by means of map on corresponding cocycles.

In the case of the cocycle $\widetilde{B}$ we have $\lambda_1 = \lambda_2 = \mu $ and a map $C$ is a map
\begin{equation}  \label{map-C}
 f_1 \otimes f_2  \longmapsto  \exp \frac{1}{2\pi i} \int_{S^1} \log f_1  \ d \log f_2 \, \mbox{,}
\end{equation}
where $f_1, f_2  \in C^{\infty}(M, \dr_+)$. Clearly, this map is ${\rm Diff}(S^1)$-equivariant.

\subsection{Similarity with the Deligne-Riemann-Roch theorem}

Now we describe a part of the  Deligne-Riemann-Roch theorem, whose formal analog we will consider.

Let $\pi : X \to S$ be a family of smooth proper curves over a scheme $S$, i.e., $\pi$ is a smooth proper morphism of relative dimension~$1$. Consider an invertible sheaf $\omega = \Omega^1_{X/S}$  on $X$.
Then there is a canonical isomorphism of invertible sheaves on $S$ (see~\cite{D1})
\begin{equation}  \label{DRR}
(\det R \pi_* \oo_X)^{\otimes 12}  \cong  \langle \omega, \omega \rangle  \, \mbox{,}
\end{equation}
where $\langle \cdot, \cdot \rangle$  is the Deligne bracket (see \cite{D1}), which assigns an invertible sheaf on
$S$ to two invertible sheaves on $X$. We note (assuming  $S$ is a smooth algebraic variety over a field) that inside the group ${\rm Pic}(S)$ the Deligne bracket for invertible sheaves $L$ and $M$ on $X$ is
\begin{equation}  \label{DBr}
c_1(\langle L, M \rangle) = \pi_* (c_1(L)  \cdot c_1(M))  \, \mbox{,}
\end{equation}
where $\pi_*$  is the direct image, which is the homomorphism
\begin{equation}  \label{dir-im}
{\rm CH}^2(X) = H^2(X, K_2(\oo_X))  \lrto {\rm Pic}(S)= H^1(S, \oo_S^*)  \, \mbox{.}
\end{equation}

Now we can compare formulas~\eqref{cup-prod} and~\eqref{DBr}.
Besides, the map $C$ given by formula~\eqref{map-C}  will look on the formal level as
\begin{equation}  \label{formCC}
\exp \res  \left( \log( \cdot) \, d \log(\cdot) \right) \, \mbox{,}
\end{equation}
where we changed $\frac{1}{2\pi i} \int_{S^1}$ to the residue  $\res$. Formula~\eqref{formCC} looks as the partial formula for the  Contou-Carr\`{e}re symbol  $\CC$, see section~\ref{Contou} below. But it is known that the Contou-Carr\`{e}re symbol $\CC$ gives the direct image, see~\cite[Th.~2, Prop.16]{Osip1},   \cite{KV}, \cite[Prop.~8.28, Rem.~8.30]{GO1}.
This all gives  the similarity of the cocycle $\widetilde{B}$ with the right hand side of formula~\eqref{DRR}.

We also note  that in~\S~1.5 of Introduction to \cite{BKTV}  there was a mention (but without any further development and over the field $\dr$) that group cocycles introduced by R.~Bott
in~\cite{B1}, see formula~\eqref{Bo},   should be   related to a side of a group theoretical Real Riemann-Roch theorem.

\subsection{Part of new formal Riemann-Roch theorem}
Now we describe the formal analog of equality~\eqref{DRR}.

Let $A$ be any commutative ring, and let $A((t))$ be the Laurent series ring over $A$. The ring $A((t))$ has the natural topology. Let
$\AutL(A)$ be the group of continuous automorphisms of the $A$-algebra $A((t))$. Then any element $\ph \in \AutL(A)$   is uniquely defined
by the element $\widetilde{\ph} = \ph(t) \in A((t))^*$  (see more in Section~\ref{Autgr}).

Now the formal Bott-Thurston cocycle $\hat{B}$, which is a formal analog of the cocycle $\widetilde{B}$,  is a $2$-cocycle on the group $\AutL(A)$
with coefficients in $A^*$. It looks as follows
$$
\hat{B} (\ph_1, \ph_2) = \CC (\widetilde{\varphi_1}', \,  \widetilde{\varphi_2}'\circ \widetilde{\varphi_1})  \, \mbox{,}
$$
where ${}'$ is the partial derivative with respect to $t$, and   $\widetilde{\varphi_2}'\circ \widetilde{\varphi_1}$ is the Laurent series from $A((t))$ that is the result of the substitution
of the series $\widetilde{\varphi_1}$ into the series $\widetilde{\varphi_2}'$ instead of variable $t$.

On the other hand, there  is the determinantal central extension $\widetilde{\AutL}(A)$ of the group $\AutL(A)$
by the group $A^*$. The group $\widetilde{\AutL}(A)$ consists of pairs $(g,s)$, where $g \in \AutL(A)$ and $s$
is an element of free $A$-module  $\det( g (A[[t]])  \mid  A[[t]])$ of rank $1$
such that for any prime ideal  $P$ of the ring  $A$ we have   $s \notin P \det(g (A[[t]])  \mid  A[[t]])$.
Here $\det(g (A[[t]])  \mid  A[[t]])$  is the relative determinant of $A$-modules $g (A[[t]])$ and $ A[[t]]$, which is  canonically isomorphic to the $A$-module
$$
\Hom\nolimits_A \left( \bigwedge^{\rm max} (g(A[[t]])/ t^l A[[t]]), \,  \bigwedge^{\rm max} (A[[t]]/ t^l A[[t]])                   \right)      \, \mbox{,}
$$
where an integer $l$ satisfies the condition  $t^l A[[t]]   \subset (A[[t]] \cap g(A[[t]]))$.

We note that the determinantal central extension originates (when $A$ is a field) from the paper~\cite{KP} of V.~G.~ Kac and D.~H.~Peterson.

There is a natural non-group section from $\AutL(A)$ to $\widetilde{\AutL}(A)$. This section gives the $2$-cocycle $D$ on the group $\AutL(A)$
with coefficients in  the group~$A^*$:
$$
D(f,g ) =  \det (d_f \cdot d_g  \cdot d_{fg}^{-1}  )  \, \mbox{,}
$$
where $f,g \in \AutL(A)$, and for any $h \in \AutL(A)$ the map
$$ d_h = \mathop{\rm pr}  \cdot (h |_{A[[t]]}) \; : \; A[[t]] \lrto  A[[t]]  \, \mbox{,} $$
 where $\mathop{\rm pr} $ is the projection from the $A$-module
$A((t))= t^{-1} A[t^{-1}] \oplus A[[t]]$ onto the {$A$-submodule} $A[[t]]$.  Here the determinant of the map $s=d_f \cdot d_g  \cdot d_{fg}^{-1}$
from the
$A$-module $A[[t]]$ to the $A$-module $A[[t]]$
  is well-defined, since there is an integer $n > 0$ such that $s |_{t^nA[[t]]} = {\rm id}$.

We can change the commutative ring $A$, and we will obtain the covariant group functor $A \mapsto \AutL(A)$
from the category of commutative rings to the category of groups.
We denote this functor by $\AutL$. Then $2$-cocycles $D$ and $\hat{B}$  are $2$-cocycles on the group functor $\AutL$ with the coefficients in the commutative group functor $\GG_m$, where $\GG_m(A) = A^*$ (see more on cocycles for group functors in Section~\ref{Van Est}).

We will prove in Theorem~\ref{theor-3} that the $2$-cocycles $D^{12}$ and $\hat{B}$ define the equivalent central extensions of the group functor  ${\AutL}_{\Q}$
by the commutative group functor ${\GG_m}_{\Q}$. (Here we restricted the group functors to $\Q$-algebras.)

In particular, as a consequence,  we obtain the corresponding statement for the central extensions of the group $\AutL(A)$ by the group $A^*$ when we fix a $\Q$-algebra $A$.

To prove Theorem~\ref{theor-3} we use Theorem~\ref{Th-2-coc}, where we prove that two central extensions of the group functor ${\AutL}_{\Q}$
by the group functor ${\GG_m}_{\Q}$ are equivalent when the corresponding Lie algebra extensions are equivalent and  the restrictions of the central extensions to the group functor of continuous $A$-algebra automorphisms of $A[[t]]$ are equivalent. We note that Lie algebras and corresponding central extensions of Lie algebras are well-defined, since the group functors are represented by ind-affine ind-schemes, see more on the construction of Lie algebra valued functors (i.e. covariant functors from the category of commutative rings to the category of Lie algebras) in Appendix~A.

In Theorem~\ref{theor-1} we compare the corresponding $2$-cocycles $\Lie D$ and $\Lie \hat{B}$ on Lie algebra valued functors. We prove that
$$12 \Lie D = \Lie \hat{B} \, \mbox{.} $$
To prove this statement we obtain the result that is similar to the statement from the theory of infinite-dimensional Lie groups
that the Lie algebra $2$-cocycle that corresponds to the group Bott-Thurston cocyle is the doubled Gelfand-Fuks cocycle on the Lie algebra of smooth vector fields on the circle. Then we prove that the formal analog of the Gelfand-Fuks cocycle is $6$-fold sum  of the cocycle $\Lie D$.

We  also note that the following interesting question. Do $D^{12}$ and $\hat{B}$ equal as $2$-cocycles? We proved only that their cohomology classes are equal.

In Theorem~\ref{theor-4}  we give a consequence of our results for a part of new formal Riemann-Roch theorem~\footnote{The further developments of these results and their application to the full new formal Riemann-Roch theorem will be given in our subsequent papers.}.
  We consider a separated scheme $S$ over $\Q$.
We consider a ringed space $\Theta$ on $S$, where the structure sheaf of $\Theta$  is locally on $S$ isomorphic to the sheaf of rings $\oo_S((t))$
and the transition automorphisms are continuous.
(Locally on $S$  this ringed space corresponds to the punctured formal neighbourhood of a section of a  smooth morphism of relative dimension~$1$.)
 Then we prove that in the \v{C}ech cohomology
group $\check{H}^2(S, \oo_S^*)$ there is an equality
\begin{equation}  \label{form-intr}
[{\mathcal Det}(\Theta)]^{12}  = \partial \, (c_1(\widetilde{\Omega}^1_{\Theta}) \cup c_1(\widetilde{\Omega}^1_{\Theta}) )  \, \mbox{.}
\end{equation}
Here $[{\mathcal Det}(\Theta)]$ is the class of the determinantal $\oo_S^*$-gerbe on $S$ constructed by $\Theta$ by means of the relative determinants $\det (\cdot \mid \cdot)$.  The sheaf $\widetilde{\Omega}^1_{\Theta}$ is a locally free sheaf of rank $1$ on the ringed space $\Theta$,
and this sheaf is an analog of an invertible sheaf $\omega$ from formula~\eqref{DRR}. We denote the class of this sheaf in ${\rm Pic} \, \Theta$ by $c_1(\widetilde{\Omega}^1_{\Theta})$.
Now we take the $\cup$-product and apply the map $\partial$, which is analog of the direct image $\pi_*$ from formula~\eqref{DBr}. The map $\partial$
is constructed by means of the Contou-Carr\`{e}re symbol $\CC$.

We note that in some sense the constructions which we use for a part of the formal Riemann-Roch theorem on a ringed space
are close to the constructions used by M.~Kapranov and \'{E}.~Vasserot in~\cite{KV}.

We also note the following interesting remark. Suppose that we fixed an affine open cover $S= \bigcup_{i \in I} U_i$ and isomorphisms $\phi_i$ such that the ringed space $\Theta$ restricted to every $U_i$
is isomorphic to $\oo_{U_i}((t))$ via $\phi_i$. Then  we can write the both parts of formula~\eqref{form-intr} as explicit \v{C}ech $2$-cocycles with respect to the cover $\{U_i\}$. Now if $2$-cocycles $D^{12}$ and $\hat{B}$ would equal, then formula~\eqref{form-intr}
would be true also for the corresponding \v{C}ech   $2$-cocycles for the sheaf $\oo_S^*$. This would give the analogy with Deligne-Riemann-Roch theorem, because we would obtain the result not in the cohomology group, but in more refined terms that are the \v{C}ech cocycles in our case.

\subsection{Organization of the paper and acknowledgements}
In Section~\ref{sec-BF} we construct the formal Bott-Thurston cocycle.
In Section~\ref{Autgr} we recall the statements on the group $\AutL(A)$ of continuous automorphisms of the $A$-algebra $A((t))$ and consider the corresponding functor~$\AutL$.
In Section~\ref{sec-loop} we recall the statements about the loop group $L \GG_m$  of the multiplicative group scheme $\GG_m$. By definition,
$L \GG_m(A)= A((t))^*$.
In Section~\ref{Symbol} we recall the definition and basic properties of the Contou-Carr\`{e}re symbol $\CC$.
In Section~\ref{Van Est} we introduce and give some properties of the cohomology groups of a group functor with coefficients in a commutative group functor. These cohomology groups are analogs of Van Est (or continuous) cohomology groups in the theory of topological groups.
In Section~\ref{FB} we introduce the formal Bott-Thurston cocycle $\hat{B}$.

In Section~\ref{Sec-det-ext} we describe the determinantal central extension and the corresponding $2$-cocycle.
In Section~\ref{more} we prove some decompositions for the group functor~$\AutL$.
In Section~\ref{sec-RD} we introduce relative determinants, which are the projective $A$-modules $\det(L \mid M)$ of rank $1$ for some $A$-submodules $L$ and $M$ of $A((t))$.
In Section~\ref{DCE} we introduce the determinantal central extension  $\widetilde{\AutL}$ of $\AutL$ by $\GG_m$, construct a natural (non-group)
section of this central extension and describe explicitly the corresponding $2$-cocycle $D$.

In Section~\ref{Sec-coc-Lie} we consider the corresponding $2$-cocycles $\Lie \hat{B}$ and $\Lie D$ on Lie algebra valued functors.
In Section~\ref{Sec-der} we identify the Lie algebra $\Lie \AutL(A)$ over $A$ with the Lie algebra of continuous $A$-derivations of the $A$-algebra
$A((t))$.
In Section~\ref{sec-theor1} we prove Theorem~\ref{theor-1}, which states  that  $12 \Lie D = \Lie \hat{B}$.

In Section~\ref{Sec-comp} we compare two central extensions given by $2$-cocycles $D$ and $\hat{B}$.
In Section~\ref{Sec-form} we recall the statements on infinitesimal formal groups.
In Section~\ref{Sec-centr} we prove Theorem~\ref{Th-2-coc} on the relation between central extensions of  ${\AutL}_{\Q}$ by ${\GG_m}_{\Q}$
and the corresponding central extensions of Lie algebras.
  In Section~\ref{Sec-main}  we prove Theorem~\ref{theor-3}, which states that $D^{12}= B$ in the group $H^2(\AutL_{\Q}, {\GG_m}_{\Q})$.

In Section~\ref{sec-ringed}  we study the part of formal Riemann-Roch theorem. In Section~\ref{sec-spaces}  we introduce $\oo((t))$-spaces that are the ringed spaces.
In Section~\ref{sec-coc} we prove Theorem~\ref{theor-4} on the part of Riemann-Roch theorem for a $\oo((t))$-space.

In Appendix A we recall some statements on Lie algebra valued functors constructed from groups functors with some conditions (in particularly, represented by  ind-affine ind-schemes) and on the corresponding Lie algebra $2$-cocycles.

 \medskip

I am grateful to A.~N.~Parshin for some comments and providing some references.

\section{Formal Bott-Thurston cocycle}  \label{sec-BF}

In this section we will introduce the formal version of the Bott-Thurston cocycle (or, more exactly, the formal version of the cocycle $\widetilde{B}$ from the Introduction). This formal cocycle will be the formal analog of the right hand side of formula~\eqref{DRR}.

\subsection{Automorphism group of  ring of Laurent series}   \label{Autgr}

Let $A$ be any commutative ring. Let $A((t))= A[[t]][t^{-1}]$ be the ring of Laurent series over $A$.

On the ring $A((t))$ there is the natural topology which makes the group $A((t))$ into a topological group, where the base of neighbourhoods of zero consists of $A$-submodules
$U_n = t^n A[[t]]$, $n \in \dz$.

We consider the group $\Aut_A^{\rm c, alg} (A((t)))$ that consists  of all $A$-auto\-mor\-phisms of the $A$-algebra $A((t))$ that are homeomorphisms of $A((t))$.

Then it is known (see, e.g., an exposition in~\cite[\S~1]{M} and see also \cite{GO2}, where the general    case of algebra of iterated Laurent series is analyzed) that $\Aut_A^{\rm c, alg} (A((t)))$ consists of all  $A$-auto\-mor\-phisms of the $A$-algebra $A((t))$ that are continuous, and
an element $\varphi \in \Aut_A^{\rm c, alg} (A((t)))$ is uniquely determined by the series $\widetilde{\varphi} =\varphi(t)$ in the following way
$$
\varphi(\sum_i a_i t^i) = \sum_i a_i    {\widetilde{\varphi} \, }^i   \, \mbox{,}  \quad a_i \in A   \, \mbox{,}
$$
where the series from the right hand side of the formula  converges  in the topology of $A((t))$.
Moreover, series $\widetilde{\varphi} =\varphi(t) \in A((t))$ are exactly the series of the following kind
\begin{equation}  \label{auto}
a_{-n}t^{-n}  + a_{-n+1} t^{-n +1} + \ldots + a_{-1} t^{-1} +a_0 +a_1t +a_2 t^2 + a_3 t^3 +\ldots  \, \mbox{,}
\end{equation}
where $n \in \dz$, $n \ge 0$, elements $a_{-n}, a_{-n +1}, \ldots, a_{-1}, a_0$  are nilpotent elements from $A$, the element $a_1$ is an invertible element from $A$,
and elements $a_2, a_3, \ldots$  are any elements from~$A$.  (The most difficult here is that an endomorphism $\sum_i a_i t^i \mapsto \sum_i a_i    {\widetilde{\varphi} \, }^i$ is an invertible automorphism of the $A$-algebra $A((t))$ for any $ \widetilde{\varphi}$ of type~\eqref{auto}, see also comment on the proof  of this fact later in~Remark~\ref{way}.)

For any $f =\sum_i b_i t^i  \in A((t))$, where $b_i \in A$, and any $g \in A((t))$ such that $g$ is of type~\eqref{auto} by $f \circ g$ we denote a series from $A((t))$ obtained after the
substitution of the series $g$ into the series $f$ instead of variable $t$, i.e.  $f \circ g= \sum_i b_i g^i$.

We note that for any elements $\varphi_1$ and $\varphi_2$  from  $\Aut_A^{\rm c, alg} (A((t)))$ we have
\begin{equation}  \label{tld}
\widetilde{\varphi_1 \varphi_2}  = \widetilde{\varphi_2}  \circ \widetilde{\varphi_1}  \, \mbox{.}
\end{equation}

We will use also the following notation for $A((t))$:
$$
{\LL} (A) = A((t))  \, \mbox{.}
$$

For brevity, we will call a covariant functor from the category of commutative rings to the category of groups (or Abelian groups) as {\em a group functor} (or {\em a commutative group functor}).

We introduce the  group functor
$$
\AutL \; : \; A \longmapsto \Aut\nolimits_A^{\rm c, alg} (\LL (A))  \,
$$
that on
 homomorphisms of commutative rings $A_1 \to A_2$ is defined  by means of associating  $\widetilde{\varphi}$ with $\varphi$ and  the corresponding map on $\widetilde{\varphi}$  (see also formula~\eqref{auto}).

\subsection{Loop group of $\GG_m$ and Contou-Carr\`{e}re symbol}  \label{Contou}

\subsubsection{Loop group of $\GG_m$}   \label{sec-loop}

Let $\GG_m$ be a group functor that is represented by the multiplicative group scheme $\GG_m$, i.e., $\GG_m(A) =  A^*$ for any commutative ring $A$.

By $L \GG_m$ we denote a group functor defined  as
$$
L \GG_m(A) = \GG_m(A((t))) = A((t))^*  \, \mbox{,}
$$
where $A$ is any commutative ring. The group functor $L \GG_m$ is called the loop group of~$\GG_m$.

We recall now the  more explicit description of $L \GG_m$  (see~\cite[Lemme~(0.7)]{CC2}).

Let $\underline{\dz}$ be a group functor such that $\underline{\dz}(A)$ is the group of locally constant integer-valued functions on $\Spec A$ for any commutative ring $A$.
We have an embedding of $\underline{\dz}(A)$ to $ L \GG_m (A)$ via
$$
\underline{\dz}(A)  \ni \underline{n}   \longmapsto t^{\underline n}  \in L \GG_m (A)  \, \mbox{,}
$$
where $t^{\underline{n}}$  is defined as follows. The element $\underline{n}$ defines a decomposition $A = A_1 \times \ldots \times A_k$
into the finite direct product of rings such that $\underline{n}$ restricted to every $\Spec A_l$ equals a constant function with value $n_l \in \dz$. Then
$t^{\underline{n}} = t^{n_1} \times \ldots \times t^{n_k} \in A((t))^*$.

Let $(L \GG_m)^0$ be a group functor such that for any commutative ring $A$
$$
(L \GG_m)^0(A) =
 \left\{    \sum_{i \in \dz}  b_i t^i  \in A((t)) \, \mid  \,  b_0 \in A^* \; \: \mbox{and} \; \:   b_i \in {\rm Nil}(A) \; \: \mbox{for any} \; \: i < 0    \right\}   \, \mbox{,}
$$
where ${\rm Nil}(A)$ is the nilradical of $A$, i.e. the set of all nilpotent elements of $A$.

Now the natural embedding of the set $ (L \GG_m)^0  (A) $ into the set  $ A((t))$
gives  an embedding of the group $(L \GG_m)^0(A)$ into the group $ L \GG_m (A)$.

Embeddings of $\underline{\dz}$ and $(L \GG_m)^0$ into $L \GG_m$ give the decomposition of the group functor into the direct product of group functors:
$$
L \GG_m = \underline{\dz}  \times (L \GG_m)^0  \, \mbox{.}
$$
This decomposition defines the morphism of group functors:
$$  \nu   \; : \; L \GG_m   \lrto \underline{\dz} \, \mbox{.}
$$

Let $(L \GG_m)^{\sharp}$ be a group functor such that for any commutative ring $A$
$$
(L \GG_m)^{\sharp}(A) =
 \left\{    \sum_{i \in \dz}  b_i t^i  \in A((t)) \, \mid  \,  b_0 -1  \in {\rm Nil}(A) \; \: \mbox{and} \; \:   b_i \in {\rm Nil}(A) \; \: \mbox{for any} \; \: i < 0    \right\}   \, \mbox{.}
$$

It is clear that $(L \GG_m)^{\sharp}(A)   \subset (L \GG_m)^0(A)$. Besides, we have $\GG_m(A)  \subset (L \GG_m)^0(A)$, where we consider an invertible element from $A$ as the series consisting of only a constant term. Thus we obtain
$$
(L \GG_m)^0 = (L \GG_m)^{\sharp}  \cdot \GG_m  \, \mbox{.}
$$
We note that this multiplicative decomposition is not the direct product, because inside $(L \GG_m)^0(A)$ we have
$$(L \GG_m)^{\sharp}(A) \cap \GG_m(A) = 1+ {\rm Nil}(A)  \mbox{.} $$

\subsubsection{Contou-Carr\`{e}re symbol} \label{Symbol}

Now we recall the definition of the Contou-Carr\`{e}re symbol (see~\cite{CC1}, \cite[\S~2.9]{D2}, \cite[\S~2]{OZ}).

Let $A$ be a commutative ring. For any $a \in A^*$ and $\underline{n}  \in \underline{\dz}(A) $  we define an element $a^{\underline{n}}$
in $A^*$ in the following way.  The element $\underline{n}$ defines a decomposition $A = A_1 \times \ldots \times A_k$
into the finite direct product of rings such that $\underline{n}$ restricted to every $\Spec A_l$ equals a constant function with value $n_l \in \dz$. Then
$a^{\underline{n}} = a^{n_1} \times \ldots \times a^{n_k} $.

We consider the following free $A((t))$-module of rang $1$:
\begin{equation}  \label{form-ker}
\widetilde{\Omega}^1_{A((t))}  = \Omega^1_{A((t))}  / N  \, \mbox{,}
\end{equation}
where $\Omega^1_{A((t))}$ is the $A((t))$-module of absolute K\"ahler differentials, the  $A((t))$-submodule $N$ is generated by all elements
 $df -  f' dt$, where $f \in A((t))$ and $f'= \frac{\partial{f}}{\partial{t}}$. We note that $N$ contains elements $da$, where $a \in A$. It is clear that $dt$ is a basis of the $A((t))$-module  $\widetilde{\Omega}^1_{A((t))}$.

Now we define residue
$$
\res \; : \;  \Omega^1_{A((t))}  \lrto A
$$
as the composition of the natural map  $\Omega^1_{A((t))} \to \widetilde{\Omega}^1_{A((t))} $ and the map $\sum_{i \in \dz} a_it^i dt   \mapsto a_{-1}$.

{\em The Contou-Carr\`{e}re symbol} is the  bimultiplicative antisymmetric morphism
$$
{\CC} \; : \; L \GG_m  \times L \GG_m \lrto \GG_m
$$
uniquely defined by the following additional  three properties.
\begin{enumerate}
\item  If $\Q \subset A$ and $f,g  \in L \GG_m(A) = A((t))^*$, then
\begin{equation}  \label{CC-exp-log}
\CC(f,g) = \exp \res \left(\log f \cdot \frac{dg}{g} \right) \quad  \mbox{when}  \quad f \in (L \GG_m)^{\sharp}(A)  \, \mbox{,}
\end{equation}
where  $\exp (x)$ and $\log(1+y)$ are the usual formal series, the series $\log$ in above formula  converges in the topology of $A(((t))$,
and application of series $\exp$ in above formula makes sense, because  $\res \left(\log f \cdot \frac{dg}{g} \right)  \in {\rm Nil }(A)$.
\item If $a \in A^*$, then ${\CC}(a, g ) = a^{\nu(g)}$.
\item  $\CC(t,t)=-1$.
\end{enumerate}
The unique extension of the Contou-Carr\`{e}re symbol $\CC$ to an arbitrary ring $A$ is given as follows. We note that direct calculation with
formula~\eqref{CC-exp-log} gives for elements $1 - a_i t^i$ and $1 - b_i t^i$ from $A((t))^*$ when $i > 0$, $j < 0$:
\begin{equation}  \label{CC-atom}
\CC (1 - a_i t^i, 1 - b_j t^j) = \left(1 - a_i^{-j /(i,j)} b_j^{i/(i,j)} \right)^{(i,j)} \, \mbox{,}
\end{equation}
where $(i, j) > 0$ denotes the greatest common divisor of $i$ and $j$.
Besides,
$${\CC (1 - a_i t^i, 1 - b_j t^j) =1}$$
when $i$ and $j$ have the same sign.
We see that  formula~\eqref{CC-atom} does not use that $\Q \subset A$.
For any $f, g \in A((t))^*$ there are  unique decompositions:
$$
f = \prod_{i < 0} (1 - a_it^i) \cdot a_0 \cdot t^{\nu(f)}  \cdot \prod_{i > 0} (1 - a_it^i) \,
\mbox{,}  \qquad
g = \prod_{j < 0} (1 - b_jt^j) \cdot b_0 \cdot t^{\nu(g)}  \cdot \prod_{j > 0} (1 - b_jt^j)   \, \mbox{,}
$$
where $a_0, b_0 \in A^*$, $a_i, b_j \in {\rm Nil}(A) $ when $i, j <0$, and the products over negative $i$ and over negative $j$ are finite products.
Now formula~\eqref{CC-atom} leads to the following formula:
\begin{equation}  \label{CC-form}
\CC(f,g)= (-1)^{\nu(f)  \nu(g)}  \frac{a_0^{\nu(g)} \prod_{i > 0} \prod_{j>0 }
 \left(1 - a_i^{j /(i,j)} b_{-j}^{i/(i,j)} \right)^{(i,j)}  }{b_0^{\nu(f)}
 \prod_{i > 0} \prod_{j>0 }
 \left(1 - a_{-i}^{j /(i,j)} b_{j}^{i/(i,j)} \right)^{(i,j)}
  }  \, \mbox{,}
\end{equation}
where the products in  the numerator and denominator actually consist of a finite number of factors, therefore the formula makes sense.

For any commutative ring $A$, its Milnor $K_2$-group
$$
K_2^M(A)= A^* \otimes_{\dz} A^* / St  \, \mbox{,}
$$
where the subgroup $St \subset A^* \otimes_{\dz} A^*$ is generated by all elements $a \otimes (1-a)$ with $a, 1-a \in A^*$. (These elements are called the Steinberg relations.)

We introduce the group functor $LK_2^M$ as follows
$$
L K_2^{M}(A) = K_2^M(A((t)))  \, \mbox{,}
$$
where $A$ is any commutative ring.

Then the Contou-Carr\`{e}re symbol factors through the natural morphism \linebreak ${L \GG_m  \times L \GG_m  \to LK_2^M}$:
$$
\CC \; : \; L \GG_m  \times L \GG_m  \lrto LK_2^M  \lrto  \GG_m  \, \mbox{.}
$$

\subsection{Cohomology of group functors and formal version of cocycle}

\subsubsection{Analog of Van Est cohomology of group functors}  \label{Van Est}

We will say that a group functor $G_1$ acts on a commutative group functor $G_2$ if the group $G_1(A)$ acts on the Abelian group $G_2(A)$ for any commutative ring $A$ and these actions are compatible for any homomorphisms of commutative rings $A_1 \to A_2$.

For commutative group functors $F_1$ and $F_2$ by $\Hom^{\rm gr}(F_1, F_2)$ we denote the Abelian group of all morphisms of group functors from $F_1$ to $F_2$,
i.e. morphisms that preserve the group structure.

For a  (covariant) functor $G$ from the category of commutative rings to the category of sets and a commutative group functor $F$ we denote by $\Hom(G,F)$ the Abelian group of all morphisms from functor $G$ to functor $F$. We note that even if $G$ is a group functor, in the group $\Hom(G,F)$
we consider $G$ as a functor from the category of commutative rings to the category of sets.

For a  functor $G$ from the category of commutative rings to the category of sets  and an integer $n \ge 1$  we denote by $G^{\times n}$ the functor that is the $n$-fold direct product of the functor $G$.

Let $G$ be a group functor and $F$ be a commutative group functor such that $G$ acts on~$F$. We consider the cochain complex of Abelian groups
\begin{equation}  \label{cochain-functor}
C^0 (G,F)  \stackrel{\delta_0}{\lrto} C^{1}(G, F) \stackrel{\delta_1}{\lrto} \ldots \lrto C^{k}(G,F) \stackrel{\delta_k}{\lrto} \ldots  \, \mbox{,}
\end{equation}
where $C^0(G, F)= F(\dz)$, $C^k(G, F)= \Hom (G^{\times k}, F)$ when $k \ge 1$, and differentials $\delta_{q}$ for integers $q \ge 0$  are given in the following way:
\begin{multline*}
\mathop{\delta_{q}c} \, (g_1, \ldots, g_{q+1}) = g_1  \mathop{c}  (g_2, \ldots, g_{q+1}) \, \cdot \,  \prod_{i=1}^q  \mathop{c}  (g_1, \ldots, g_i g_{i+1}, \ldots, g_{q+1})^{(-1)^i } \,  \cdot\\
\cdot \,   \mathop{c}  (g_1, \ldots, g_q)^{(-1)^{q+1} }  \, \mbox{,}
\end{multline*}
where $c \in C^q(G,F)$, $g_j \in G(A)$ with $1 \le j \le q+1$ for any commutative ring $A$.

Complex~\eqref{cochain-functor} is well-defined, i.e. $\delta_{q+1} \delta_q =0 $ when $q \ge 0$, since the last equality is true for any commutative ring $A$, where this equality is the part of bar or standard resolution to calculate the group cohomology.

\begin{nt} \em
We used the multiplicative notation for the group laws in  Abelian groups $F(A)$ and $C^{q}(G,F)$, since  in this paper we  have examples of these groups only
with the multiplicative notation.
\end{nt}

\begin{defin}
Suppose that a group functor  $G$ acts on a commutative group functor $F$. We consider complex~\eqref{cochain-functor}.

By a $q$-cocycle on $G$ with coefficients in $F$ we call an element of the subgroup  \linebreak $\Ker \delta_q \subset C^q(G,F)$, where $q \ge 0$.

By a $q$-coboundary on $G$ with coefficients in $F$ we call an element of the subgroup $\Image \delta_{q-1} \subset C^q(G,F)$, where $q \ge 1$.

We introduce the Abelian group $H^q(G,F)= \Ker \delta_q / \Image \delta_{q-1} $ for any $q \ge 0$.
\end{defin}

\begin{nt} {\em
The definition of cohomology groups $H^q(G,F)$ is analogous to the definition of Van Est (or continuous) cohomology groups for topological groups and topological modules, see, e.g.,~\cite[ch.~1, \S~1.2.B]{FF},~\cite{Fu}.
If the functors $G$ and $F$ are represented by a group scheme or a group ind-scheme, then this analogy becomes especially clear.
}
\end{nt}

\begin{nt} \label{over_R}  {\em
We fix a commutative ring $R$.
If under conditions of the definition,  $G$ and $F$ are functors over $R$, i.e. functors from the category of commutative $R$-algebras, then in complex~\eqref{cochain-functor} one has to replace $C^0(G,F)= F(\dz)$ to $C^0(G,F)= F(R)$.
}
\end{nt}

We immediately obtain the following proposition.

\begin{prop}  \label{coh-gr-fun}
Suppose that a group functor  $G$ acts on a commutative group functor~$F$.
\begin{enumerate}
\item
 A collection $\{ c_A \}$ of $q$-cocycles on  $G(A)$ with coefficients in  $F(A)$, where $A$ runs over all commutative rings, together with obvious compatibility condition between $q$-cocycles $c_{A_1}$ and $c_{A_2}$ for any homomorphism of commutative rings $A_1 \to A_2$ defines the $q$-cocycle on $G$ with coefficients in $F$.
\item
For any commutative ring $A$ there is a natural morphism  from complex~\eqref{cochain-functor}
to the complex obtained from the  bar resolution and that calculates the  cohomology of the group $G(A)$ with coefficients in the Abelian group $F(A)$.
Hence we obtain a map from $q$-cocycles on $G$ with coefficients in $F$ (or $q$-coboundaries on $G$ with coefficients in~$F$)
to $q$-cocycles on $G(A)$ with coefficients in  $F(A)$ (or $q$-coboundaries on  $G(A)$ with coefficients in  $F(A)$), and a natural map
  $$
H^q(G, F) \lrto H^q(G(A), F(A))  \, \mbox{.}
$$
 \end{enumerate}
\end{prop}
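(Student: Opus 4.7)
The plan is to unfold the definitions: both assertions will follow tautologically from the identification $C^q(G,F) = \Hom(G^{\times q}, F)$ for $q \ge 1$, once one observes that the differential $\delta_q$ in complex~\eqref{cochain-functor} is defined by exactly the same formula as the differential in the ordinary bar complex computing group cohomology.

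For part~1, I would first note that giving a morphism of functors $c : G^{\times q} \to F$ is by definition the same as giving a collection of maps $c_A : G(A)^{\times q} \to F(A)$, one for each commutative ring $A$, compatible with every ring homomorphism $\phi : A_1 \to A_2$ in the sense that $F(\phi) \circ c_{A_1} = c_{A_2} \circ G(\phi)^{\times q}$. Thus elements of $C^q(G, F)$ are precisely such compatible families $\{c_A\}$. Next, since the formula for $\delta_q$ uses only the product and inversion of $G$ and the action of $G$ on $F$, and since these structure morphisms are natural in $A$, one has $(\delta_q c)_A = \delta_q(c_A)$, where the right-hand side is the bar differential of $c_A$ in the group cohomology of $G(A)$ with coefficients in $F(A)$. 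Hence $c$ is a $q$-cocycle on $G$ with coefficients in $F$ if and only if each $c_A$ is a $q$-cocycle on $G(A)$ with coefficients in $F(A)$, which is exactly the content of part~1.

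For part~2, the evaluation map $\mathrm{ev}_A : C^q(G,F) \to C^q_{\mathrm{bar}}(G(A), F(A))$, $c \mapsto c_A$, is a morphism of cochain complexes by the identity $(\delta_q c)_A = \delta_q(c_A)$ just recorded. It therefore sends cocycles to cocycles and coboundaries to coboundaries, and induces the claimed map $H^q(G,F) \to H^q(G(A), F(A))$ on cohomology. For $q = 0$ one uses the natural map $F(\Z) \to F(A)$, or $F(R) \to F(A)$ in the relative setting of Remark~\ref{over_R}, coming from the structural ring homomorphism.

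Neither part presents any serious obstacle; this is why the proposition is stated as immediate. The only small points requiring attention are the $q = 0$ case, where the source of the evaluation map is $F(\Z)$ (respectively $F(R)$) rather than the group of arbitrary $F(A)$-valued functions on a point, and the observation that the compatibility condition between $c_{A_1}$ and $c_{A_2}$ alluded to in the statement is nothing other than the naturality of the morphism $G^{\times q} \to F$, so that no information beyond the family $\{c_A\}$ itself is needed.
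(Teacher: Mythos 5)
Your proof is correct and is essentially the argument the paper has in mind: the paper offers no written proof at all (it says the proposition is obtained "immediately"), and your unfolding of $C^q(G,F)=\Hom(G^{\times q},F)$ as compatible families $\{c_A\}$ together with the identity $(\delta_q c)_A=\delta_q(c_A)$ is exactly the intended tautology. Your attention to the $q=0$ case ($F(\Z)$, resp.\ $F(R)$) is a sensible extra precision.
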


As usual, there is the relation with extensions of group functors that is given in the following proposition.
\begin{prop}  \label{Ext-funct}
Suppose that a group functor  $G$ acts on a commutative group functor $F$. Then elements of the group $H^2(G, F)$
are in one-to-one correspondence with equivalence classes of  extensions of the group functor $G$ by the  group functor $F$
\begin{equation}  \label{ext}
1 \lrto F \lrto \widetilde{G} \stackrel{\pi}{\lrto} G \lrto 1
\end{equation}
such that the morphism $\pi$ has a section $G \to \widetilde{G}$ that is a morphism of functors (in general,   not group functors, i.e. $G(A) \to \widetilde{G}(A)$ is a map only of the sets for any commutative ring $A$), and
the action of $G$ on $F$ comes from inner automorphisms in the group functor $\widetilde{G}$.
\end{prop}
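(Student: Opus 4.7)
The plan is to adapt to the functorial setting the classical correspondence between $H^2$ and equivalence classes of extensions with a set-theoretic section, performing every construction uniformly in the commutative ring $A$ so that it descends automatically to a morphism of functors.

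From a $2$-cocycle $c \in C^2(G,F)$ I would build an extension as follows: for each commutative ring $A$, set $\widetilde G(A) = F(A) \times G(A)$ as a set, with multiplication
\begin{equation*}
(f_1, g_1) \cdot (f_2, g_2) = \bigl(f_1 \cdot g_1(f_2) \cdot c_A(g_1,g_2), \; g_1 g_2\bigr),
\end{equation*}
where $c_A$ denotes the $2$-cocycle induced on $G(A)$. The identity $\delta_2 c = 1$ is exactly what makes this multiplication associative, and after replacing $c$ by a cohomologous normalized cocycle the pair $(1,1)$ is a two-sided identity. Functoriality in $A$ is automatic because all structural data---multiplication in $F$, action of $G$ on $F$, evaluation of $c$---are morphisms of functors by hypothesis. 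The obvious inclusion $F \hookrightarrow \widetilde G$, projection $\pi : \widetilde G \to G$, and section $g \mapsto (1, g)$ yield an extension of the form \eqref{ext}, and inner conjugation inside $\widetilde G$ recovers the originally given action of $G$ on $F$ by direct verification.

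Conversely, an extension \eqref{ext} equipped with a functorial set-section $s : G \to \widetilde G$ (supplied by hypothesis) produces
\begin{equation*}
c(g_1, g_2) = s(g_1) \cdot s(g_2) \cdot s(g_1 g_2)^{-1},
\end{equation*}
whose values lie in $F$ because the product is sent to $1$ in $G$, and which is a morphism $G^{\times 2} \to F$ since $s$ is. Associativity in $\widetilde G$ together with the hypothesis that inner conjugation realizes the specified action of $G$ on $F$ translates exactly into $\delta_2 c = 1$. Altering the section by $\lambda \in C^1(G,F)$ changes $c$ by the coboundary $\delta_1 \lambda$, and an equivalence of extensions transports sections, so the class $[c] \in H^2(G,F)$ depends only on the equivalence class of the extension.

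It remains to verify that the two constructions are mutually inverse. Reading the cocycle off from the canonical section $g \mapsto (1,g)$ of the crossed product built from $c$ returns $c$; conversely, a given extension with section $s$ is identified with the crossed product of its cocycle by the functorial isomorphism $\widetilde g \mapsto \bigl(\widetilde g \cdot s(\pi(\widetilde g))^{-1}, \pi(\widetilde g)\bigr)$. I do not expect any serious obstacle beyond the classical case: the only genuinely new point is that each of the above maps, sections, actions, and cocycles must be checked to commute with base change $A_1 \to A_2$, and this is immediate from the definition of morphism of functors. The only minor bookkeeping is normalization---an arbitrary class in $H^2(G,F)$ may be represented by a cocycle with $c(1,1) \neq 1$, but modifying it by $\delta_1(c(1,1)^{-1})$ yields a cohomologous normalized representative, which is what the crossed product construction requires for $(1,1)$ to serve as the identity.
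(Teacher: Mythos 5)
Your proposal is correct and follows essentially the same route as the paper, which simply declares the argument standard, records the formula $\sigma(g_1)\sigma(g_2)=\Lambda(g_1,g_2)\sigma(g_1g_2)$ for extracting the $2$-cocycle from a section, and notes that changing the section alters $\Lambda$ by a coboundary. Your write-up merely fills in the standard details (crossed-product construction, normalization, mutual inverseness, functoriality under base change) that the paper leaves implicit.
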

\begin{proof}
This is standard. Choose a section $\sigma : G \to \widetilde{G}$. Then we define the $2$-cocycle $\Lambda$ on $G$ with coefficient in $F$ as follows:
$$
\sigma(g_1) \sigma(g_2) = \Lambda(g_1, g_2) \sigma(g_1 g_2)  \, \mbox{,}
$$
where $g_1, g_2 \in G(A)$ for any commutative ring $A$. When we change the section $\sigma$, then the $2$-cocycle $\Lambda$ will change  by a $2$-coboundary.
\end{proof}

\begin{nt} \em
It is important that the morphism $\pi$ in a group functor extension~\eqref{ext} has a section  $G \to \widetilde{G}$ from $\Hom (G, \widetilde{G})$.
\end{nt}

\begin{nt}   \label{central} \em
A group functor extension~\eqref{ext} from Proposition~\ref{Ext-funct} is central, i.e. the subgroup $F(A)$ is in the center of the group $\widetilde{G}(A)$ for any commutative ring $A$, if  and only if $G$ acts trivially on $F$.
\end{nt}

For any two commutative group functors $F_1$ and $F_2$ we denote by $F_1 \otimes F_2$ the commutative group functor defined as
$(F_1 \otimes F_2) (A) = F_1 (A) \otimes_{\dz} F_2(A)$ for any commutative ring $A$.

\begin{nt}   \label{cup-funct}
\em  Clearly, by Proposition~\ref{coh-gr-fun}, the $\cup$-products between cohomology groups of a group functor $G $
with coefficients in a commutative group functor $F$ are written in the same way as $\cup$-products in group cohomology by means of the same explicit formulas on cocycles, see Section~\ref{prod-gr-coh}.

\end{nt}

\begin{nt}  \label{hom-cocycle}
\em Let $F_1$ and $F_2$ be two commutative group functors with the action of the group functor $G$ on them. Clearly, any morphism of group functors
$F_1 \to F_2$ (i.e. an element from $\Hom^{\rm gr}(F_1, F_2)$) that commutes with the action of $G$ induces the morphism of the corresponding
complexes~\eqref{cochain-functor}, and hence the homomorphism of corresponding cocycles, coboundaries and cohomology groups.
\end{nt}

\subsubsection{Formal version of cocycle} \label{FB}

For any commutative ring $A$  the action of the group $\AutL(A)  = \Aut\nolimits_A^{\rm c, alg} (\LL (A)) $ on the $A$-algebra $\LL(A)=A((t))$
 induces the action of the group $\AutL(A)$ on the Abelian group $L \GG_m(A) = A((t))^*$.

This gives the action
of the group functor $\AutL$  on the commutative group functor $L\GG_m $.

Let $A$ be any commutative ring. Then $\LL(A)$-module  $\widetilde{\Omega}^1_{\LL(A)}$ contains an element
 $d t $, which is the basis of this module. For any   element  $\varphi \in \AutL(A)$
we define (see also notation in Section~\ref{Autgr})
\begin{equation}  \label{form-tau}
\tau(\varphi)  = \frac{d \varphi(t)}{   dt} =  = {\widetilde{\varphi}}'  \in \LL(A)^*=A((t))^*  \, \mbox{.}
\end{equation}
For any elements $\varphi_1$ and $\varphi_2$ from  $\AutL(A)$  we have
$$
\tau({\varphi_1 \varphi_2})= {\widetilde{ \varphi_1 \varphi_2}}'= (\widetilde{\varphi_2} \circ \widetilde{\varphi_1})' =
 ({\widetilde{\varphi_2}}' \circ \widetilde{\varphi_1}) \cdot {\widetilde{\varphi_1}}' = \tau(\varphi_1) \cdot \varphi_1(\tau(\varphi_2))  \, \mbox{.}
$$

Hence we obtain that $\tau$ is a $1$-cocycle on  the group functor  $\AutL$ with coefficients in the commutative group functor $L \GG_m$.

The image of $\tau \times \tau$  under the $\cup$-product  (see Section~\ref{prod-gr-coh} and Remark~\ref{cup-funct}) is the $2$-cocycle
$\tau \cup \tau$
 on  $\AutL$
with coefficients in $L \GG_m  \otimes L \GG_m$.

We note that
the Contou-Carr\`{e}re symbol
$$\CC \;  : \;  L \GG_m  \otimes L \GG_m  \lrto \GG_m$$  commutes with the diagonal action of the group functor
$\AutL$ on the commutative group functor $L \GG_m  \otimes L \GG_m$, see, e.g.,~\cite{GO3}, where this property is proved in more general situation for the $n$-dimensional Contou-Carr\`{e}re symbol that generalizes the (one-dimensional) Contou-Carr\`{e}re symbol $\CC$.

Therefore after the apllication of the Contou-Carr\`{e}re symbol $\CC$ to the $2$-cocycle $\tau \cup \tau$ we obtain the well-defined
$2$-cocycle on $\AutL$
with coefficients in $ \GG_m$, see Remark~\ref{hom-cocycle}.

Thus, we obtained the following proposition.

\begin{prop}[Formal Bott-Thurston cocycle]  \label{FormCoc}
An explicit formula
\begin{equation}  \label{FBT}
\hat{B}(\varphi_1,  \varphi_2)=
\CC(\tau(\varphi_1), \, \varphi_1(\tau(\varphi_2)))=
\CC (\widetilde{\varphi_1}', \,  \widetilde{\varphi_2}'\circ \widetilde{\varphi_1}) \, \mbox{,}
\end{equation}
where elements $\varphi_1 $ and $\varphi_2$ are from the group $\AutL(A)$, and $A$ is any commutative ring,
gives a well-defined $2$-cocycle $\hat{B}$, which we call the formal Bott-Thurston cocycle,  on the group functor $\AutL$
with coefficients in the group functor
$ \GG_m$.
\end{prop}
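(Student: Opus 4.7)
The proposition essentially assembles several facts already established earlier in the section, so my proof plan is to verify each piece in order and then combine them.

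First I would check that $\tau$ is a $1$-cocycle on the group functor $\AutL$ with coefficients in $L\GG_m$. This is a direct chain-rule calculation using formula~\eqref{tld}: for $\varphi_1, \varphi_2 \in \AutL(A)$,
\begin{equation*}
\tau(\varphi_1 \varphi_2) = \widetilde{\varphi_1\varphi_2}\,' = (\widetilde{\varphi_2}\circ\widetilde{\varphi_1})' = (\widetilde{\varphi_2}\,'\circ\widetilde{\varphi_1})\cdot\widetilde{\varphi_1}\,' = \tau(\varphi_1)\cdot\varphi_1(\tau(\varphi_2)),
\end{equation*}
which is the $1$-cocycle condition in multiplicative notation. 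Naturality in $A$ is clear because $\widetilde{\varphi}$ depends naturally on $\varphi$. This gives an element of $H^1(\AutL, L\GG_m)$, represented at the level of cocycles by $\tau$.

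Second, I would apply the $\cup$-product for group functor cohomology, as described in Section~\ref{prod-gr-coh} and justified in Remark~\ref{cup-funct}: since cohomology of group functors computes via complex~\eqref{cochain-functor} exactly as the bar resolution does pointwise, the explicit formula
\begin{equation*}
(\tau \cup \tau)(\varphi_1,\varphi_2) = \tau(\varphi_1) \otimes \varphi_1(\tau(\varphi_2))
\end{equation*}
defines a $2$-cocycle on $\AutL$ with coefficients in the commutative group functor $L\GG_m \otimes L\GG_m$ (with diagonal $\AutL$-action).

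Third, I would invoke the $\AutL$-equivariance of the Contou-Carrère symbol $\CC : L\GG_m \otimes L\GG_m \to \GG_m$ (with trivial action on the target $\GG_m$); this is the property cited from~\cite{GO3}, and will be the only non-formal input. By Remark~\ref{hom-cocycle}, composing an equivariant homomorphism of coefficient group functors with a cocycle yields a cocycle. Applying this to $\CC$ and to $\tau \cup \tau$ produces a well-defined $2$-cocycle on $\AutL$ with coefficients in $\GG_m$ whose explicit formula is
\begin{equation*}
(\varphi_1,\varphi_2) \longmapsto \CC\bigl(\tau(\varphi_1),\,\varphi_1(\tau(\varphi_2))\bigr) = \CC\bigl(\widetilde{\varphi_1}\,',\,\widetilde{\varphi_2}\,'\circ\widetilde{\varphi_1}\bigr),
\end{equation*}
which is exactly $\hat{B}$ from formula~\eqref{FBT}.

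None of these steps is really an obstacle: the chain rule verification is formal, the cup product is general machinery from Section~\ref{Van Est} applied via Proposition~\ref{coh-gr-fun}, and equivariance of $\CC$ under $\AutL$ is an external input cited from~\cite{GO3}. The only point requiring care is that naturality in $A$ holds throughout, which follows because $\widetilde{\varphi}$, the derivative operation $f \mapsto f'$, the substitution $f\circ \widetilde{\varphi_1}$, and the Contou-Carrère symbol are all natural with respect to ring homomorphisms $A_1 \to A_2$. Hence $\hat{B}$ lies in $C^2(\AutL, \GG_m)$ and satisfies $\delta_2 \hat{B} = 0$, completing the proof.
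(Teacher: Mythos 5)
Your proposal is correct and follows essentially the same route as the paper: verify the $1$-cocycle property of $\tau$ via the chain rule and formula~\eqref{tld}, form the $\cup$-product $\tau \cup \tau$, and push forward along the $\AutL$-equivariant Contou-Carr\`ere symbol using Remark~\ref{hom-cocycle}. No gaps.
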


\begin{nt}  \em
It is possible to rewrite formula~\eqref{FBT} in the spirit of formula~\eqref{Bo}:
$$
\hat{B}(\varphi_1,  \varphi_2)= \CC (\widetilde{\varphi_1}', \,  (\widetilde{\varphi_2} \circ \widetilde{\varphi_1})') =
\CC(\tau(\varphi_1), \tau(\varphi_1 \varphi_2))  \, \mbox{,}
$$
where the first equality follows from the bimultiplicative property of $\CC$ and the fact  $\CC(\widetilde{\varphi_1}', \, \widetilde{\varphi_1}') =1$
that follows, for example, from formula~\eqref{CC-form}.
\end{nt}

\begin{nt}  \em
For the construction of the $2$-cocycle $\hat{B}$
 it was important that the Contou-Carr\`ere symbol  $\CC$ is invariant under the diagonal action of the group functor $\AutL$
 on the commutative group functor $L \GG_m  \otimes L \GG_m$.

 The Contou-Carr\`ere symbol $\CC$ factors through the commutative group functor $L K_2^M$
 (see the end of Section~\ref{Symbol}). Besides, the diagonal action of  $\AutL$
 on  $L \GG_m  \otimes L \GG_m$ induces the action of  $\AutL$ on $L K_2^M$.

 Now from~\cite[Theorem~8.10]{GO1}, \cite{GO4}, \cite[\S~7]{GO5} it follows that {\em any} morphism of group functors from   $L K_2^M$  to $\GG_m$, i.e. an element
  from $\Hom^{\rm gr}(L K_2^M ,\GG_m)$,
  is induced by $(\CC)^i$, where $i \in \dz$ and $(\CC)^i$ means the composition of the Contou-Carr\`ere symbol $\CC$ and the morphism
 $$\GG_m \lrto \GG_m  \;  :  \;  g \longmapsto g^i  \, \mbox{,} \quad g \in \GG_m(A)=A^* \, \mbox{,} \quad  A \quad \mbox{is any commutative ring}   \, \mbox{.}$$

 Hence we obtain that {\em any} morphism of group functors from $L K_2^M$  to $\GG_m$ commutes with the action of $\AutL$ on $L K_2^M$,
 and therefore this morphism can be also used to construct a $2$-cocycle  on $\AutL$ with coefficients in $\GG_m$. In this case we obtain the $i$-th multiple of the
 $2$-cocycle $\hat{B}$ for some $i \in \dz$.
\end{nt}

\begin{nt}  \em
The maps which we considered above for the construction of the $2$-cocycle~$\hat{B}$ induce the following maps on cohomology groups
\begin{multline*}
H^1(\AutL, \GG_m) \otimes_{\dz} H^1(\AutL, \GG_m)  \lrto H^2(\AutL, L \GG_m \otimes L \GG_m )  \lrto  \\
\lrto
H^2(\AutL, L K_2^M )  \lrto H^2(\AutL, \GG_m)  \, \mbox{,}
\end{multline*}
where the first map is the $\cup$-product, and the composition of the last two maps is induced by the Contou-Carr\`ere symbol $\CC$.
\end{nt}

\begin{nt}  \label{Nilp}  \em
Suppose that ${\rm Nil}(A) = 0$ for a commutative ring $A$.
The from formula~\eqref{FBT} and formula~\eqref{CC-form}  for the Contou-Carr\`ere symbol $\CC$  it follows that $\hat{B}(\varphi_1,  \varphi_2) =1 $
for any $\varphi_1 $ and $\varphi_2$  from  $\AutL(A)$.
\end{nt}

\begin{nt} \label{higher} \em
There is the $n$-dimensional Contou-Carr\`ere symbol $\CC_n$ that is
a multilinear antisymmetric functorial map of $n+1$ variables:
$$
A((t_1))\ldots ((t_n))^*  \, \times \ldots \times  \, A((t_1))  \ldots ((t_n))^*  \lrto A^*  \, \mbox{,}
$$
where  $A$ is a  commutative ring, and
$\CC_1 = \CC$.
When $A$ is a $\Q$-algebra, then the main ingredient for $\CC_n$ is the formula
$$
\CC\nolimits_n (f_1, \ldots , f_{n+1}) = \exp \res \left( \log f_1 \cdot \frac{d f_2}{f_2}  \wedge \ldots \wedge \frac{d f_{n+1}}{f_{n+1}} \right) \, \mbox{,}
$$
where $\res$ is the $n$-dimensional residue that is equal to the coefficient by \linebreak
${t_1^{-1} \ldots t_n^{-1} dt_1 \wedge \ldots dt_n}$, elements $f_1, \ldots, f_{n+1}  \in A((t_1)) \ldots ((t_{n}))^*$, and there is some condition on $f_1$
so that the series $\log f_1$ converges in the natural topology of $A((t_1))  \ldots ((t_{n}))$, see~\cite{OZ}, where it was done  for $n=2$, and~\cite{GO1, GO5} for any $n$.

Then it is possible to write  the  formal version of the slightly changed $(n+1)$-cocycle $B$ from formula~\eqref{Bo} for the $n$-dimensional torus using the $n$-dimensional Contou-Carrere symbol $\CC_n$, the formal analog of formula~\eqref{Bo} and the differential form $\omega = dt_1 \wedge \ldots \wedge dt_n $.
 This $(n+1)$-cocycle is on the group of continuous $A$-automorphisms of the $A$-algebra $A((t_1))  \ldots ((t_n))$ with values  in the group $A^*$, where $A$ is any commutative ring. When $n=1$ this cocycle coincides with  the cocycle $\hat{B}$ from Proposition~\ref{FormCoc}.

The study of this $(n+1)$-cocycle  coming from $A((t_1))  \ldots ((t_n))$ will be the subject of our futher investigation.
\end{nt}

\section{Cocycle from the determinantal central extension}   \label{Sec-det-ext}
In this section we describe the determinantal central extension of $\AutL$  by $\GG_m$, an explicit section of this central extension as functors but not as group functors, and a $2$-cocycle on $\AutL$  with values in $\GG_m$ obtained from the central extension and the section.

\subsection{More on the automorphism group of ring of Laurent series}  \label{more}

We define now  the following  subfunctors of the  functor  $\AutL$:
\begin{equation}  \label{Subfunct}
\AutLp \, \mbox{,}  \quad  \AutLm  \, \mbox{,} \quad \AutLpo \, \mbox{,} \quad  \AutLmo \mbox{.}
\end{equation}

Let $A$ be any commutative ring.

We define the subset $\AutLpo (A)   $  of the group $\AutL (A)$
that consists of elements $\varphi$
such that the corresponding elements $\tilde{\varphi} = \varphi(t) \in A((t))^*$
are exactly the elements of the kind
\begin{equation}  \label{f1}
a_1 t + a_2 t^2 + \ldots \, \mbox{,}
\end{equation}
where $a_1 \in A^*$, and elements $a_i$ with $i \ge 2$ are any from $A$.
It is easy to see that $\AutLpo (A)   $ is a subgroup.

We define the subset $\AutLp (A)   $
of the group $\AutL (A)$
that consists of elements $\varphi$
such that the corresponding elements $\tilde{\varphi} = \varphi(t) $
are exactly the elements of the kind
\begin{equation}  \label{f2}
a_0 + a_1t + a_2 t^2 + \ldots   \, \mbox{,}
\end{equation}
where $a_0$ is from the nil-radical $ {\rm Nil} (A)$, $a_1 \in A^*$, and elements $a_i$ with $i \ge 2$ are any from $A$.

We define the subset $\AutLmo(A)$ of the group $\AutL(A)$ that consists of elements $\varphi$ such that the corresponding
elements  $\tilde{\varphi} = \varphi(t) $ are exactly the elements of the kind
\begin{equation}   \label{f3}
a_{-n} t^{-n} + a_{-n +1} t^{-n +1} + \ldots + a_{-1}t^{-1} + a_0 + t   \, \mbox{,}
\end{equation}
where $n \in \dz$, $n \ge 0$, all elements $a_i$ are from ${\rm Nil}(A)$.

We define the subset $\AutLm(A)$ of the group $\AutL(A)$ that consists of elements $\varphi$ such that the corresponding
elements  $\tilde{\varphi} = \varphi(t) $ are exactly the elements of the kind
\begin{equation}  \label{f4}
a_{-n} t^{-n} + a_{-n +1} t^{-n +1} + \ldots + a_{-1}t^{-1} + t   \, \mbox{,}
\end{equation}
where $n \in \dz$, $n \ge 1$, all elements $a_i$ are from ${\rm Nil}(A)$.

Thus we have the following embeddings of sets:
\begin{gather*}
\AutLpo(A) \subset \AutLp(A)  \subset \AutL(A) \\ \AutL(A) \supset  \AutLmo(A) \supset \AutLm(A)  \, \mbox{.}
\end{gather*}

\begin{prop}  \label{Decomp}
Let $A$ be any commutative ring. There are the following unique decompositions
\begin{multline}  \label{dec-sub}
\AutL(A) = \AutLpo(A) \cdot \AutLmo(A)  =  \AutLp(A)  \cdot  \AutLm(A) = \\
= \AutLmo(A) \cdot \AutLpo(A) = \AutLm(A) \cdot \AutLp(A)
\, \mbox{.}
\end{multline}
All subsets used in these decompositions are subgroups, i.e. subfunctors~\eqref{Subfunct} are group subfunctors.  Besides,
these decompositions are functorial with respect to $A$, and therefore give the decompositions of the group functor $\AutL$.
\end{prop}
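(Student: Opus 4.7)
The plan has three parts: check that the four subsets in~\eqref{Subfunct} are subgroups, establish triviality of the relevant pairwise intersections (yielding uniqueness), and construct the four decompositions explicitly by solving a triangular recursion on the coefficients of $\widetilde{\varphi}$. Functoriality in $A$ will be automatic from the polynomial nature of the formulas.

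For the subgroup property I would verify each case directly from the composition law $\widetilde{\varphi_1 \varphi_2} = \widetilde{\varphi_2} \circ \widetilde{\varphi_1}$: if $\widetilde{\varphi_1} = t + v_1$ and $\widetilde{\varphi_2} = t + v_2$ belong to $\AutLmo(A)$, then $\widetilde{\varphi_1 \varphi_2} = t + v_1 + v_2(t + v_1)$, and the finite expansion $(t + v_1)^{-j} = t^{-j}(1 + v_1/t)^{-j}$ (finite because $v_1/t$ has nilpotent coefficients that generate a nilpotent ideal in the finitely generated subring they produce) shows that only nonpositive powers of $t$ with nilpotent coefficients appear in $v_2(t + v_1)$, so $\AutLmo$ is closed under composition. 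The arguments for $\AutLpo$, $\AutLp$, $\AutLm$ are analogous, and closure under inversion is obtained by a Newton-style correction modulo successive powers of the nilradical. Uniqueness of all four decompositions then reduces to inspection of the forms~\eqref{f1}--\eqref{f4}, giving $\AutLpo(A) \cap \AutLmo(A) = \AutLp(A) \cap \AutLm(A) = \{1\}$: in each intersection the only series fitting both sets of constraints is $\widetilde{\varphi}(t) = t$.

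The heart of the proof is the existence of $\AutL(A) = \AutLpo(A) \cdot \AutLmo(A)$. Given $\widetilde{\varphi}(t) = \sum_{i=-n}^{\infty} a_i t^i$ as in~\eqref{auto}, I would look for $\widetilde{\varphi_+}(t) = \alpha_1 t + \alpha_2 t^2 + \cdots$ and $\widetilde{\varphi_-}(s) = s + \gamma_0 + \sum_{j=1}^{n} \gamma_{-j} s^{-j}$, with $\alpha_1 \in A^*$ and $\gamma_j \in {\rm Nil}(A)$, solving
\[
\widetilde{\varphi} \;=\; \widetilde{\varphi_+} \;+\; \gamma_0 \;+\; \sum_{j=1}^{n} \gamma_{-j}\,\widetilde{\varphi_+}^{\,-j}.
\]
The system is solved by a triangular induction: the coefficient of $t^{-n}$ gives $\gamma_{-n}$, downward induction on $j$ then determines each $\gamma_{-j}$, the constant-term equation gives $\gamma_0$, and the positive-power equations fix the nilpotent corrections $\alpha_i - a_i$. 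To handle the coupling between the equations for the $\alpha_i$ and the $\gamma_{-j}$, I would first solve them in the universal ring $R = \Z[x_{-n}, \ldots, x_0, x_1^{\pm 1}, x_2, x_3, \ldots]/(x_{-n}, \ldots, x_0)^N$, in which the relevant ideal is honestly nilpotent; there the recursion terminates after finitely many steps and produces each $\gamma_j$ and $\alpha_i$ as an explicit polynomial in the $x_i$, and these formulas then transfer to any commutative ring $A$ by functoriality.

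The remaining three decompositions are deduced from the first. Introducing the shift subfunctor $\Sigma(A) = \{\sigma_a : a \in {\rm Nil}(A)\}$ with $\widetilde{\sigma_a}(t) = t + a$, a short direct check yields $\AutLp = \AutLpo \cdot \Sigma$ and $\AutLmo = \Sigma \cdot \AutLm$, whence $\AutLp \cdot \AutLm = \AutLpo \cdot \Sigma \cdot \AutLm = \AutLpo \cdot \AutLmo = \AutL$. The reversed-order decompositions $\AutL = \AutLmo \cdot \AutLpo$ and $\AutL = \AutLm \cdot \AutLp$ are proved by the symmetric triangular recursion for the equation $\widetilde{\varphi} = \widetilde{\varphi_+} \circ \widetilde{\varphi_-}$, with the roles of the positive and negative expansions interchanged. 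The principal obstacle throughout is the coupling between the equations for the $\alpha_i$ and $\gamma_{-j}$ in the main decomposition; this is dealt with by the reduction to the universal Noetherian ring in which the nilradical is a nilpotent ideal, so that the successive approximation terminates.
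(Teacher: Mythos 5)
Your proposal is correct and follows essentially the same strategy as the paper's: both arguments rest on the observation that the finitely many coefficients in non-positive degrees generate a nilpotent ideal $I$, so that a successive approximation modulo powers of $I$ terminates and yields the factorization $\AutL(A) = \AutLpo(A) \cdot \AutLmo(A)$, after which the other decompositions follow by splitting off the constant term and using that the subsets are subgroups. The only packaging difference is that you run the approximation on coefficients inside a universal ring $\Z[x_{-n},\dots,x_0,x_1^{\pm 1},x_2,\dots]/(x_{-n},\dots,x_0)^N$ and transfer by functoriality, whereas the paper iteratively right-multiplies by elements of $\AutLpo(A)$ directly over $A$; these are interchangeable.
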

\begin{nt} \em
Decompositions~\eqref{dec-sub}   are not the direct products and not the semidirect products of groups for some commutative rings $A$.
\end{nt}
\begin{proof}
We note that all subsets used in decompositions~\eqref{dec-sub}  are closed with respect to the multiplication induced from the group structure
of $\AutL(A)$.

We prove the existence of decompositions. First, we prove the existence of the first decomposition.

For the proof we will use the following statement. Let $J$ be an ideal in the ring $A$. Suppose that an element $p =\sum_{i \ge 1} d_i t^i$,
where $d_i \in A$, $d_1 \in A^*$, belongs to $t + J((t))$. Then for the element $q= \sum_{i\ge 1} e_i t^i$, where $e_1 \in A^*$, such that $p \circ q = t$ we have
that $q $ belongs to $t + J((t))$. The proof of the statement follows at once if we consider the images of $p$ and $q$ to the ring
$(A/J)((t))$.

Let $\varphi \in \AutL(A)$ such that
$$
\widetilde{\varphi} = \varphi(t) = \sum_i a_i t^i   \, \mbox{,} \quad \mbox{where} \quad a_1 \in A^* \quad \mbox{and} \quad a_i \in {\rm Nil}(A) \quad \mbox{if} \quad i < 1 \,
\mbox{.}
$$

We denote $g_1 = \widetilde{\varphi}$. Since  $a_1 \in A^*$, there is the element
${h_1 = \sum_{i \ge 1} c_i t^i}$,
where ${c_1 \in A^*}$,
such that $(\sum_{i \ge 1} a_i t^i) \circ h_1 = t$. Hence we obtain that the element \linebreak
${g_2 = g_1 \circ h_1 = \sum_i b_i t^i }$
belongs to $t + I((t)) \subset A((t))$, where $I$ is an ideal in the ring $A$
 generated by all elements $a_i$ with $i \le 0$. The ideal $I$ is generated by the finite number of nilpotent elements. Therefore there is
 an integer $k > 0$ such that $I^k=0$.

We consider  the element  $h_2 = \sum_{i \ge 1} f_i t^i $, where $f_1 \in A^*$,    such that
$
(\sum_{i \ge 1} b_i t^i ) \circ h_2 =t
$. Then by the statement above, $h_2 \in t + I((t))$. Therefore we obtain the element \linebreak
$g_3 = g_2 \circ h_2= \sum_i c_i t^i  \in t + I((t)) $, where  $c_1 \in 1 + I^2$ and $c_i \in I^2$ when $i \ge 2$.

We consider the element $h_3 = \sum_{i \ge 1 } v_i t^i$, where $v_1 \in A^*$, such that
$(\sum_{i \ge 1}  c_i t^i) \circ h_3 = t$.

By iterating this process, we obtain
$g_{k+1} = g_1 \circ h_1 \circ h_2 \ldots \circ h_{k} = \sum_i w_i t^i  \in t + I((t))$, where $w_i  \in I^k =0$ when $i \ge 2$, and
$w_1  \in 1 + I^k$, i.e. $w_1 =1$.

Hence we obtained that $g_1 \circ ( h_1 \circ h_2 \ldots \circ h_{k} )= \sum_{i \le 0} z_i t^i + t$ with $z_i \in {\rm Nil}(A)$. Since we know that
the element  $h_1 \circ h_2 \ldots \circ h_{k}$ corresponds to the element from
  $\AutLpo (A)   $, and  $\AutLpo (A)   $ is a group, we obtain now the first decomposition from~\eqref{dec-sub}.

The second decomposition from~\eqref{dec-sub} follows from the first decomposition, since we always have that an element
$
(\sum_{i \le 0 } c_i t^i + t) \circ (t - c_0)
$
corresponds to the element from $\AutLm(A)$ (see formula~\eqref{f4}).

Now we prove  that   $\AutLmo(A)$ is a group. It is enough to prove that for any element $g = a_{-l}t^{-l} + \ldots + a_{-1}t^{-1} + a_0 + t  $,
where $l \ge 0$ and  $a_{i} \in {\rm Nil}(A)$ when $i \le 0$, there is an element $h =b_{-m} t^{-m} + \ldots + b_{-1} t^{-1} + b_0 + t$, where
$m \ge 0$ and $b_{i} \in {\rm Nil}(A)$ when $i \le 0$, such that $g \circ h = t$. Let $I$ be a nilpotent ideal in the ring $A$
generated by elements $a_{-l}, \ldots, a_{-1}, a_0$.  We note that
$
g \circ (t - a_0 -a_1 t^{-1} - \ldots - a_{-l}t^{-l})  \in t + I^{2}[t^{-1}]
$.
Iterating this process and using the nilpotence of $I$ we obtain the element $h$.

In the same way we can prove that   $\AutLm(A)$ is a group.

To see that $\AutLp(A)$ is a group we note that for any element $\sum_{i \ge 0} d_i t^i$, where $d_0 \in {\rm Nil}(A)$, $d_1 \in A^*$,
we have  $(\sum_{i \ge 0} d_i t^i) \circ g = d_0 + t$, where $(\sum_{i  \ge 1} d_i t^i) \circ g = t$, and, besides, $(d_0 +t) \circ (-d_0 + t) =t$.

Now the third and  fourth decompositions in~\eqref{dec-sub} follow from the first and second decompositions, because subsets in these decompositions are subgroups and we can apply the map $g \mapsto g^{-1}$.

The uniqueness of the decompositions follow from the facts that the subsets in the decompositions are subgroups and inside $\AutL(A)$ we have (see formulas~\eqref{f1}-\eqref{f4})
$$
\AutLpo(A)  \cap \AutLmo(A) = \AutLp(A) \cap \AutLm(A)  = e  \, \mbox{,}
$$
where $e$ is the identity element  of the group $\AutL(A)$.
\end{proof}

\begin{nt} \em  \label{way}
The proof of Proposition~\ref{Decomp} gives the way how to construct the element $h$ given by formula~\eqref{auto}  for an element $g$  of the same type such that
$g \circ h = h \circ g = t$. (We don't use in the proof the fact that it is possible to construct such an element.)
Besides, from the third decomposition in~\eqref{dec-sub} it is easy to see that for a series $g$ of form~\eqref{auto} the map
$A((t)) \to A((t)) : f \mapsto f \circ g$
is a continuous map.
\end{nt}

\begin{nt} \em
 The group $\AutLp (A)   $ is exactly  the group of all continuous $A$-automorphisms of the {$A$-algebra} $A[[t]]$.
\end{nt}

From the explicit form of elements $\widetilde{\varphi} = \varphi(t)$ for $\varphi \in \AutL (A)$, see formula~\eqref{auto},
it is easy to see that the  functor  $\AutLp$ is represented by the following ind-scheme
\begin{equation}  \label{ind-sch}
\Spec \dz[a_1, a_1^{-1}] \times \Spec \dz[a_2, a_3, a_4, \ldots]  \times  \mbox{``$\varinjlim\limits_{\{\epsilon_i\}}$''} \Spec \dz [a_0, a_{-1}, a_{-2}, \ldots]/ I_{\{\epsilon_i\}}  \, \mbox{,}
\end{equation}
where  the limit is taken over all the sequences $\{\epsilon_i\}$ with integers $i \le 0$ and $\epsilon_i$ are nonnegative integers such that all but finitely many $\epsilon_i$ equal zero, the ideal $I_{\{\epsilon_i\}}$ is generated by elements $a_i^{\epsilon_i +1}$ for all integers $i \le 0$.

We note that decomposition in formula~\eqref{ind-sch} is not the decomposition of group ind-schemes (when we consider elements of type~\eqref{auto}).
But it is easy to see that functors~\eqref{Subfunct} are also represented by ind-schemes. Therefore decompositions in Proposition~\ref{Decomp} give decompositions of group ind-schemes (but these decompositions are not direct products and not semidirect products.)

\subsection{Relative determinants}  \label{sec-RD}

The following proposition follows from the general theory of Tate $A$-modules, see~\cite[\S~2.13]{BBE}. We give here a simple proof for the case of $A((t))$.
\begin{prop} \label{cont}
Let $A$ be a commutative ring.
\begin{enumerate}
\item For any $A$-module homeomorphism (i.e. a homeomorphism that preserves the $A$-module structures)  $g: A((t))  \to A((t))$   and any $n \in \dz$ there are $l,m \in \dz$ such that
$$
t^m A[[t]]  \subset g (t^n A[[t]])  \subset t^l A[[t]]  \, \mbox{.}
$$
\item For any  $A$-module homeomorphisms  $g$ and  $h$ from $A((t))$ to $A((t))$  and any $m,n \in \dz$ such that $g (t^n A[[t]]) \supset h (t^m A[[t]])$ we have that
$g (t^n A[[t]]) / h (t^m A[[t]])$ is a finitely generated projective $A$-module.
\end{enumerate}
\end{prop}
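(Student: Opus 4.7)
My plan is to establish (1) by a direct continuity argument, and (2) by reducing to the case where $h$ is the identity and then exhibiting $P/Q$ as a direct summand of the finite-rank free $A$-module $t^l A[[t]]/t^m A[[t]]$ via a splitting of $A((t))$ produced by $g$.

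For (1), the lower inclusion $t^m A[[t]] \subset g(t^n A[[t]])$ is immediate: since $g$ is a homeomorphism fixing $0$, the image $g(t^n A[[t]])$ is an open neighborhood of $0$, hence contains some basic open $t^m A[[t]]$. For the upper inclusion, I would use continuity of $g$ at $0$ to find $N \geq n$ with $g(t^N A[[t]]) \subset A[[t]]$; each of the finitely many remaining generators $g(t^n), \ldots, g(t^{N-1})$ lies in some $t^{l_i} A[[t]]$, and setting $l := \min(l_n, \ldots, l_{N-1}, 0)$, every element $g(f) = \sum_{i \geq n} a_i g(t^i)$ of $g(t^n A[[t]])$ converges in the closed subgroup $t^l A[[t]]$ (because $a_i g(t^i) \to 0$ by continuity of $g$).

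For (2), I would first replace $g$ by $h^{-1} g$: this is still an $A$-module homeomorphism, and the $A$-module isomorphism $h^{-1}$ identifies $g(t^n A[[t]])/h(t^m A[[t]])$ with $(h^{-1} g)(t^n A[[t]])/t^m A[[t]]$. It therefore suffices to treat the case where $h$ is the identity and $Q = t^m A[[t]] \subset P := g(t^n A[[t]])$. By (1), $P \subset t^l A[[t]]$ for some $l \leq m$. Applying $g$ to the evident $A$-module decomposition $A((t)) = t^{n-1} A[t^{-1}] \oplus t^n A[[t]]$ gives $A((t)) = P^{\#} \oplus P$, where $P^{\#} := g(t^{n-1} A[t^{-1}])$. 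Intersecting with $t^l A[[t]] \supset P$ yields $t^l A[[t]] = P \oplus (P^{\#} \cap t^l A[[t]])$, and quotienting by $Q \subset P$ (the induced sum remains direct by the modular law, using $P \cap P^{\#} = 0$) produces $t^l A[[t]]/Q = P/Q \oplus ((P^{\#} \cap t^l A[[t]]) + Q)/Q$. Since $t^l A[[t]]/t^m A[[t]]$ is the free $A$-module of rank $m - l$, the direct summand $P/Q$ is finitely generated projective.

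The main subtle point will be the projectivity assertion in (2), not merely the finite generation. The easier-to-find route (use continuity of $g$ to choose $k$ with $g(t^k A[[t]]) \subset t^m A[[t]]$, then note $g(t^n A[[t]])/g(t^k A[[t]]) \cong A^{k-n}$) only presents $P/Q$ as a quotient of a finite-rank free $A$-module, which over an arbitrary commutative ring $A$ need not be projective. The additional input exploited above is that $g$ transports the polar-holomorphic splitting of $A((t))$ to a direct-sum splitting compatible with $P \subset t^l A[[t]]$, which lets the modular law cut $P/Q$ out as a direct summand of a finite-rank free module.
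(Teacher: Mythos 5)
Your proof is correct and takes essentially the same approach as the paper's: part (1) is the same continuity argument, and part (2) reduces to $h=\mathrm{id}$ and realizes $g(t^nA[[t]])/t^mA[[t]]$ as a direct summand of the finite-rank free module $t^lA[[t]]/t^mA[[t]]$. The only difference is cosmetic: you produce the splitting by transporting $A((t))=t^{n-1}A[t^{-1}]\oplus t^nA[[t]]$ through $g$ and applying the modular law, whereas the paper splits the exact sequence $0\to g(t^nA[[t]])/t^mA[[t]]\to A((t))/t^mA[[t]]\to A((t))/g(t^nA[[t]])\to 0$ using that the cokernel, being isomorphic to $A((t))/t^nA[[t]]$, is free --- which is the same underlying observation that $g$ carries the standard complement of $t^nA[[t]]$ to a complement of $g(t^nA[[t]])$.
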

\begin{proof}
1. This follows from continuous property of $g$ and $g^{-1}$, and that an $A$-module $t^rA[[t]]/ t^s A[[t]]$ is finitely generated.

2. By applying  the map $h^{-1}$, we obtain that it is enough to prove this item for the identity map $h$. Now we have that $A((t))/ t^m A[[t]]$
is a free $A$-module. The module
$$A((t))/ g (t^n A[[t]]) = g A((t))/g (t^n A[[t]]) \simeq A((t))/ t^n A[[t]] $$ is also a free $A$-module. Therefore from the exact sequence of $A$-modules
$$
0 \lrto g (t^n A[[t]]) / t^m A[[t]]  \lrto A((t))/ t^m A[[t]]  \lrto A((t))/ g (t^n A[[t]])  \lrto 0
$$
we obtain that $A((t))/ t^m A[[t]]  \simeq g (t^n A[[t]]) / t^m A[[t]]  \oplus A((t))/ g (t^n A[[t]])$. This implies that $g (t^n A[[t]]) / t^m A[[t]]$ is a projective $A$-module. Now we will prove  that this module is finitely generated.  By item 1, there is $l \in \dz$ such that
$g (t^n A[[t]]) \subset t^l A[[t]]$, and, as we have just proved, $t^l A[[t]]  / g (t^n A[[t]])$ is a projective $A$-module. Therefore we have that
$$t^l A[[t]] /t^m A[[t]]  \, \simeq  \,  g (t^n A[[t]]) / t^m A[[t]]  \, \oplus \, t^l A[[t]]  / g (t^n A[[t]]) \mbox{.}$$ Now we use that the $A$-module
$t^l A[[t]] /t^m A[[t]]$ is finitely generated.
\end{proof}

\bigskip

Let $A$ be a commutative ring.
Let $L$ and $M$ be  $A$-submodules of $A((t))$ that are equal to $g (t^n A[[t]])$ and $h (t^m A[[t]])$ for some
 $A$-module homeomorphisms  $g$ and  $h$ from $A((t))$ to $A((t))$. Let $l$ be an integer
such that $L$ and $M$ contain $t^l A[[t]]$. Then the projective $A$-module of rank $1$
$$
\Hom\nolimits_A \left(\bigwedge^{\rm max}(L/ t^l A[[t]]),  \, \bigwedge^{\rm max}(M/ t^lA[[t]]) \right)
$$
does not depend on the choice of $l$ up to a unique isomorphism. We identify over all such $l$ all these projective $A$-modules via the following
definition of the projective $A$-module of rank $1$
$$
\det(L \mid M) = \varinjlim_l \Hom\nolimits_A \left(\bigwedge^{\rm max}(L/ t^l A[[t]]), \, \bigwedge^{\rm max}(M/ t^lA[[t]]) \right)  \, \mbox{.}
$$

For any $A$-submodule $N$ of the same form as $L$ and $M$ above we have a canonical isomorphism of $A$-modules
$$
\det(L \mid M)  \otimes_A \det(M \mid N) \lrto \det(L \mid N)
$$
that satisfies the associativity diagram for any four $A$-submodules of $A((t))$ of the same form as $L$ and $M$ above. Besides,
any $A$-module homeomorphism  $f: A((t))  \to A((t))$  defines an isomorphism of $A$-modules:
$\det(L \mid M) \to \det(f(L) \mid f(M))$.

\bigskip

For any continuous  $A$-module homomorphism   $g$  from $A((t))$ to $A((t))$ we write $g$ as the block matrix with respect to the decomposition
$A((t)) = t^{-1} A[t^{-1}] \oplus A[[t]]$:
\begin{equation}  \label{matr}
\begin{pmatrix}
a_g & b_g  \\
c_g & d_g
\end{pmatrix}
\end{equation}
where $d_g : A[[t]]  \to A[[t]]$, $d_g= \mathop{\rm pr} \cdot (g |_{A[[t]]})$ and $\mathop{\rm pr}: A((t))  \to A[[t]]$
is the projection. (Here  the matrix acts from the left on an element-column from $A((t))$.) We note that $d_g$ is continuous as the composition of continuous maps.

\begin{prop}  \label{d}
For any  $A$-module homeomorphism  $g$  from $A((t))$ to $A((t))$ such that $d_g$ is bijective,
$\det( A[[t]] \mid g (A[[t]]) )$ is a free $A$-module of rank $1$.
\end{prop}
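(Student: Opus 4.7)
My plan is to reduce the claim to an $A$-module isomorphism $A[[t]]/t^l A[[t]] \cong g(A[[t]])/t^l A[[t]]$ for a suitable large integer $l$, using the hypothesis on $d_g$ to produce a second direct-sum decomposition of $A((t))$ that is parallel to the standard one. The first step, and the only substantive one, is to establish the internal direct sum
\begin{equation*}
A((t)) = t^{-1} A[t^{-1}] \oplus g(A[[t]]).
\end{equation*}
Injectivity of $d_g = \mathop{\rm pr}\circ (g|_{A[[t]]})$ gives $g(A[[t]]) \cap t^{-1} A[t^{-1}] = 0$, since any $g(x)$ in the intersection satisfies $d_g(x) = 0$ and hence $x = 0$. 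Surjectivity of $d_g$ gives $A((t)) = t^{-1} A[t^{-1}] + g(A[[t]])$: for any $z \in A((t))$, choose $x \in A[[t]]$ with $d_g(x) = \mathop{\rm pr}(z)$; then $z - g(x) \in t^{-1} A[t^{-1}]$ and $z = (z - g(x)) + g(x)$.

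Next, by Proposition~\ref{cont} I can pick $l \ge 0$ so that $t^l A[[t]] \subset A[[t]] \cap g(A[[t]])$, and reduce both the standard decomposition $A((t)) = t^{-1} A[t^{-1}] \oplus A[[t]]$ and the decomposition above modulo $t^l A[[t]]$. Inside $A((t))/t^l A[[t]]$ the submodules $A[[t]]/t^l A[[t]]$ and $g(A[[t]])/t^l A[[t]]$ are then both complementary to the image of $t^{-1} A[t^{-1}]$. Projecting onto the common quotient yields a canonical $A$-module isomorphism
\begin{equation*}
g(A[[t]])/t^l A[[t]] \xrightarrow{\sim} A[[t]]/t^l A[[t]].
\end{equation*}

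Finally, the target $A[[t]]/t^l A[[t]]$ is the free $A$-module with basis $1, t, \ldots, t^{l-1}$, so $g(A[[t]])/t^l A[[t]]$ is also free of rank $l$ by the isomorphism above. Their top exterior powers are therefore free $A$-modules of rank~$1$, and consequently $\Hom_A$ between them, which by definition equals $\det(A[[t]] \mid g(A[[t]]))$, is free of rank~$1$, as required. The main obstacle is really just the first direct-sum decomposition; the rest is formal from the definition of the relative determinant.
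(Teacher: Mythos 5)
Your proof is correct, and it takes a genuinely different route from the paper's. The paper does not form the decomposition $A((t)) = t^{-1}A[t^{-1}] \oplus g(A[[t]])$ at this point; instead it chooses $L = t^n A[[t]]$ with $g(L) \subset A[[t]]$, observes that $d_g|_L = g|_L$ and hence $d_g(L) = g(L)$, and composes the two isomorphisms
$A[[t]]/g(L) = A[[t]]/d_g(L) \xrightarrow{d_g^{-1}} A[[t]]/L \xrightarrow{g} g(A[[t]])/g(L)$;
it then descends to a common lattice $N = t^k A[[t]] \subset g(L)$ using that all the quotients involved are projective (so the relevant short exact sequences split), obtaining a non-canonical isomorphism $A[[t]]/N \cong g(A[[t]])/N$. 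You instead extract from the bijectivity of $d_g$ the internal direct sum $A((t)) = t^{-1}A[t^{-1}] \oplus g(A[[t]])$ --- which is exactly the converse of the implication established right afterwards in Section~\ref{DCE} (the unlabelled proposition there together with Corollary~\ref{Con}) --- and then identify $A[[t]]/t^l A[[t]]$ and $g(A[[t]])/t^l A[[t]]$ as two complements of the same submodule of $A((t))/t^l A[[t]]$. This buys you a canonical isomorphism already at the level $l$ at which the relative determinant is computed, with no appeal to projectivity or to splittings, and it shows that $g(A[[t]])/t^l A[[t]]$ is actually free, not merely projective. The paper's route is marginally more local, manipulating only $g$ and $d_g$ on lattices, but pays for it with the extra (and slightly elided) descent step from $g(L)$ to $N$. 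Both arguments are valid and of comparable length.
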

\begin{proof}
Since $g$ is continuous, there is $L =t^n A[[t]]$ with  $n \ge 0$  such that $g(L)  \subset A[[t]] $. Hence we have that $d_g |_L = g |_L$. Hence we obtain the following isomorphism:
$$
\frac{A[[t]]}{g(L)}= \frac{A[[t]]}{d_g(L)}  \stackrel{d_g^{-1}}{\longrightarrow}  \frac{A[[t]]}{L}
\stackrel{g}{\longrightarrow} \frac{g(A[[t]])}{g(L)}
$$
Since all the above $A$-modules are projective, taking some $N =t^k A[[t]] \subset g(L)$, we obtain the isomorphism of $A$-modules
$A[[t]]/N  \to g(A[[t]])/ N $, which gives the isomorphism of the corresponding top exterior powers of these projective $A$-modules.

\end{proof}

\subsection{Determinantal central extension}  \label{DCE}
\begin{prop}
Let $A$ be a commutative ring. For any $g \in \AutL(A)$ and any $n \in \dz$ we have
$$
A((t))= t^{n-1}A[t^{-1}]  \oplus g(t^n A[[t]])  \, \mbox{.}
$$
\end{prop}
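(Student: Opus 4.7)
The plan is to recast the proposition as the bijectivity of a single $A$-linear endomorphism and then split the argument along the product decomposition of $\AutL(A)$ from Proposition~\ref{Decomp}. Let $\pi_n : A((t)) \to t^n A[[t]]$ denote the projection along the obvious direct sum $A((t)) = t^{n-1}A[t^{-1}] \oplus t^n A[[t]]$. Then the claimed equality $A((t)) = t^{n-1}A[t^{-1}] \oplus g(t^n A[[t]])$ is equivalent to the statement that the composition $\pi_n \circ g|_{t^n A[[t]]} : t^n A[[t]] \to t^n A[[t]]$ is an isomorphism of $A$-modules. Using the third decomposition in~\eqref{dec-sub}, write $g = g_- \cdot g_+$ with $g_- \in \AutLmo(A)$ and $g_+ \in \AutLpo(A)$. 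Form~\eqref{f1} shows $\widetilde{g_+} = t\, u$ with $u \in A[[t]]^*$, so $g_+(A[[t]]) = A[[t]]$ and $g_+(t^n A[[t]]) = \widetilde{g_+}^{\, n} \cdot A[[t]] = t^n A[[t]]$. Thus $g(t^n A[[t]]) = g_-(t^n A[[t]])$, and it suffices to prove the proposition for $g \in \AutLmo(A)$.

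For $g \in \AutLmo(A)$ one has $\widetilde{g} = t + n_-$ with $n_- = a_{-k}t^{-k} + \ldots + a_0$, where the elements $a_{-k},\ldots,a_0$ are nilpotent. Let $J \subset A$ be the ideal they generate. Being finitely generated by nilpotents, $J$ is itself a nilpotent ideal, say $J^N = 0$. My plan is to show that the operator $T := \pi_n \circ g - \mathrm{id}$ on $t^n A[[t]]$ satisfies
\begin{equation*}
T(J^l \cdot t^n A[[t]]) \subseteq J^{l+1} \cdot t^n A[[t]] \qquad \text{for all } l \ge 0 \, \mbox{.}
\end{equation*}
Granting this, $T^N = 0$, so $\pi_n \circ g|_{t^n A[[t]]} = \mathrm{id} + T$ is invertible with explicit inverse $\sum_{k=0}^{N-1} (-1)^k T^k$, which finishes the argument.

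The main point, and the likely obstacle, is to verify that $g(f) - f \in J^{l+1} \cdot A((t))$ whenever $f \in J^l \cdot A((t))$. By $A$-linearity of $g$, this reduces to the case $f = t^j$, and the binomial expansion
\begin{equation*}
\widetilde{g}^{\, j} - t^j = (t + n_-)^j - t^j = \sum_{m \ge 1} \binom{j}{m} t^{j-m} n_-^m
\end{equation*}
has coefficients in $J$ (actually in $J^m$ in the $m$-th summand), and terminates because $n_-^m = 0$ for $m \ge N$; the case $j < 0$ follows by the same expansion applied to $\widetilde{g}^{\,-1} = t^{-1}(1 + n_- t^{-1})^{-1}$, which again becomes a finite sum by nilpotence of $J$. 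The remaining technical check is that $\sum_j f_j (\widetilde{g}^{\, j} - t^j)$ converges in $A((t))$ for arbitrary $f = \sum_j f_j t^j \in A((t))$: the bound on negative exponents produced by $\widetilde{g}^{\, j}$ (which cannot drop below $j - (N-1)(k+1)$) ensures that each coefficient of the sum involves only finitely many $j$. Apart from this bookkeeping the proof is routine.
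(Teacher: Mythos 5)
Your proof is correct, and it shares its first half with the paper's argument: the reduction to $g\in\AutLmo(A)$ via the factorization $g=g_-\cdot g_+$ of Proposition~\ref{Decomp} together with the observation that $\AutLpo(A)$ stabilizes $t^nA[[t]]$ is exactly what the paper does. The two arguments part ways in the remaining case $g\in\AutLmo(A)$. The paper finishes in one line by looking at the \emph{complement}: since $\AutLmo(A)$ is a group and each of its elements sends $t^j$ to a Laurent series supported in degrees $\le j$, one gets $g^{-1}\bigl(t^{n-1}A[t^{-1}]\bigr)=t^{n-1}A[t^{-1}]$, so applying the bijection $g^{-1}$ to the asserted decomposition turns it into the tautological $A((t))=t^{n-1}A[t^{-1}]\oplus t^nA[[t]]$. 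You instead analyze the \emph{diagonal block}, showing that $\pi_n\circ g|_{t^nA[[t]]}=\mathrm{id}+T$ with $T$ raising the $J$-adic filtration, hence nilpotent. Both are valid; the paper's route is shorter because it never has to compute, while yours buys an explicit inverse $\sum_{k}(-1)^kT^k$ and is really the same unipotence mechanism that underlies Proposition~\ref{d} and Corollary~\ref{Con} (for $n=0$ your operator $\pi_0\circ g|_{A[[t]]}$ is precisely $d_g$). One small point of hygiene: reducing the claim $g(f)-f\in J^{l+1}\cdot A((t))$ to the monomial case $f=t^j$ requires continuity (or a coefficientwise argument), not just $A$-linearity, since $f$ is an infinite series; your convergence bookkeeping at the end supplies exactly this, so the gap is only in the phrasing.
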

\begin{proof}
By Proposition~\ref{Decomp}, let $g = g_1 g_2$, where $g_1 \in \AutLmo(A)$, $g_2 \in \AutLpo(A)$. It is easy to see that
$g_2(t^n A[[t]]) = t^n A[[t]]$. Therefore we can suppose that ${g = g_1}$.
 Then from the fact that the set ${\mathcal Aut}_{-,1}^{\rm c, alg} ({\mathcal L} )(A)$
is a group it is easy to see that  $ g_1^{-1} (t^{n-1}A[t^{-1}] ) = t^{n-1}A[t^{-1}]  $. Now let us act by the element $g_1^{-1}$ on both sides of the equality from Proposition 3.4, which is to be proved. We get an obvious equality. So the original equality is also true.
\end{proof}

Taking $n=0$, we obtain at once  the following corollary (see notation in~\eqref{matr}).
\begin{cons}  \label{Con}
For any $g \in \AutL(A)$ the map $d_g$ is bijective.
\end{cons}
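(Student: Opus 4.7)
The plan is to apply the preceding proposition with $n=0$, which gives the direct sum decomposition
$$
A((t)) = t^{-1} A[t^{-1}] \oplus g(A[[t]]) \, \mbox{,}
$$
and then read off bijectivity of $d_g$ as a direct consequence of this decomposition and the fact that $g$ is an $A$-module automorphism of $A((t))$.

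First, for injectivity, I would take $x \in A[[t]]$ with $d_g(x) = 0$. By definition of $d_g$ as $\mathop{\rm pr} \cdot (g |_{A[[t]]})$, this means $g(x) \in t^{-1}A[t^{-1}]$. But also $g(x) \in g(A[[t]])$, so by the directness of the sum above, $g(x) = 0$. Since $g$ is an automorphism of $A((t))$, this forces $x = 0$.

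Second, for surjectivity, I would take an arbitrary $y \in A[[t]] \subset A((t))$ and write $y = u + v$ using the direct sum, where $u \in t^{-1}A[t^{-1}]$ and $v \in g(A[[t]])$. Write $v = g(x)$ for a unique $x \in A[[t]]$. Then $g(x) = y - u$, and applying $\mathop{\rm pr}$ (which kills $t^{-1}A[t^{-1}]$ and is the identity on $A[[t]]$) gives $d_g(x) = \mathop{\rm pr}(y) - \mathop{\rm pr}(u) = y$. Hence $d_g$ is surjective.

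The argument is essentially formal once the decomposition from Proposition~3.4 is in hand, so there is no real obstacle here; the whole content lies in the previous proposition, which is already proved. I would present the corollary's proof in just a few lines, emphasizing that both injectivity and surjectivity are immediate consequences of $A((t)) = t^{-1}A[t^{-1}] \oplus g(A[[t]])$ together with the fact that $g$ is bijective on $A((t))$.
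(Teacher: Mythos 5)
Your proof is correct and follows exactly the route the paper intends: the paper derives the corollary by "taking $n=0$" in the preceding proposition and leaves the bijectivity of $d_g$ as an immediate consequence of the decomposition $A((t)) = t^{-1}A[t^{-1}] \oplus g(A[[t]])$. Your few lines simply make explicit the injectivity and surjectivity checks that the paper omits, and both are carried out correctly.
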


We define the central extension
\begin{equation}  \label{det-cent-ext}
1 \lrto A^*  \lrto \widetilde{\AutL}(A)  \lrto \AutL(A)  \lrto  1  \, \mbox{,}
\end{equation}
where the group $\widetilde{\AutL(A)}$ consists of pairs $(g, s)$, where $g \in \AutL(A)$
and $s$ is an element of projective $A$-module  $\det(g(A[[t]])  \mid  A[[t]])$ of rank $1$ such that for any prime ideal  $P$ of the ring  $A$ we have   $s \notin P \det(g(A[[t]])  \mid  A[[t]])$.
 The group law is as follows
$$
(g_1, s_1) (g_2, s_2)= (g_1 g_2,  g_1(s_2)  \otimes s_1)  \, \mbox{.}
$$
By Proposition~\ref{d} and Corollary~\ref{Con} this central extension is well-defined.

Clearly, $\widetilde{\AutL} : A \mapsto  \widetilde{\AutL}(A)$ is a group functor that gives the central extension of group functor $\AutL$
by group functor $\GG_m$, and we call this central extension as {\em the determinantal central extension}.
(The functoriality of ${A \mapsto  \widetilde{\AutL}(A)}$ follows from the natural isomorphism
$$(g (A_1[[t]]) / h (A_1[[t]])) \otimes_{A_1} A_2   \xrightarrow{\sim}   u(g) (A_2[[t]]) / u(h) (A_2[[t]])  \mbox{,}$$
which is satisfied, since the projective $ A_1$-module $g (A_1[[t]]) / h (A_1[[t]]))$ is a direct summand of an $A_1$-module $t^lA_1[[t]]/ t^mA_1[[t]]$ for appropriate $l, m \in \dz$,
where ${u: A_1 \to A_2}$ is any homomorphism of commutative rings, which induces the homomorphism \linebreak ${\AutL(A_1) \to \AutL(A_2)}$ denoted by the same letter $u$, and elements $g, h$ are from
$ \AutL(A_1)$ such that $g (A_1[[t]]) \supset h (A_1[[t]])$.)

We will construct now explicitly a section of  the determinantal  central extension (the section not as group functors) and, hence, construct the corresponding $2$-cocycle on $\AutL$
with coefficients in $\GG_m$.

For this goal we construct another exact sequence of group functors that for any commutative ring $A$ looks as follows:
$$
1 \lrto \GL_f(A)  \lrto \widehat{\AutL}(A)  \lrto \AutL(A)  \lrto 1  \, \mbox{.}
$$

Here the group $\GL_f(A)$ consists of all elements $r \in \GL_A(A[[t]])$ such that there is an integer $n >0$, which depends on $r$,
with the property $r |_{t^n A[[t]]} = {\rm id}$. Here $\GL_A(A[[t]])$ is the group of all $A$-module automorphisms $A[[t]] \to A[[t]]$ (we do not demand that these automorphisms preserve the ring structure).
We note that $r$ is always a continuous map, since $r - {\rm id} $ is a continuous map.

The group  $\widehat{\AutL}(A)$ as a set consists of all pairs $(g, r)$, where $g \in \AutL(A)$ and $r \in \GL_A(A[[t]])$
such that there is an integer $n > 0$, which depends on $g$ and $r$, with the property $d_g |_{t^nA[[t]]} = r |_{t^nA[[t]]}$. We note that
$r$ is always a continuous map, since $d_g -r$ and $d_g$ are continuous maps. Using that $g$, $g^{-1}$ and $r$ are continuous maps, it is easy to see that the set
$\widehat{\AutL}(A)$ is a subgroup in $\AutL(A) \times \GL_A(A[[t]])$. This gives the group structure on $\widehat{\AutL}(A)$.

Now the map $\widehat{\AutL}(A)  \to \AutL(A)$ is $(g,r)  \mapsto g$.

We will construct a morphism of exact sequences of group functors that for any commutative ring $A$ looks as follows:
 $$
 \xymatrix{
 1 \ar[r] & \GL_f(A) \ar[d]  \ar[r] &  \widehat{\AutL}(A) \ar[d]^(.4){\pi}  \ar[r] & \AutL(A) \ar@{=}[d] \ar[r] & 1 \\
  1 \ar[r] & A^*   \ar[r] &  \widetilde{\AutL}(A)   \ar[r] & \AutL(A)  \ar[r] & 1
 }
 $$
where
all homomorphisms given by vertical arrows are surjective and $\pi |_{\GL_f(A)}$ equals to the map $\det $, i.e. the determinant, which is well-defined by  definition of $\GL_f(A)$ (cf. also~\cite[Prop.~1.6]{FZh}, where the similar morphism of complexes  was written when $A$ is a field).

Let $(g, r)  \in \widehat{\AutL}(A)$. Then there is an $A$-submodule $L =t^m A[[t]]  $ with $m \ge 0$
such that  $g(L)  \subset A[[t]]$ and  $r |_L = d_g |_L$.
Since $g(L) \subset A[[t]]$, we have $d_g |_L = g |_L$.  Therefore $r(L)= g(L)$. Then we have the following isomorphism
$$
\frac{A[[t]]}{g(L)}= \frac{A[[t]]}{r(L)}  \stackrel{r}{\longleftarrow}  \frac{A[[t]]}{L}
\stackrel{g^{-1}}{\longleftarrow} \frac{g(A[[t]])}{g(L)}
$$
Since all the above $A$-modules are projective, taking some $N =t^k A[[t]] \subset g(L)$, we obtain the isomorphism of $A$-modules
$g(A[[t]])/ N \to A[[t]]/N$, which gives the isomorphism of the corresponding top exterior powers of these projective $A$-modules. This gives an element  $ s \in \det(g (A[[t]])  | A[[t]])$ such that  for any prime ideal $P$ of $A$ we have  ${s \notin P\det(g (A[[t]])  | A[[t]])}$. Thus we have constructed the map $\pi$ as $\pi((g,r))= (g,s)$. This map  is a group homomorphism.

The morphism of functors $\widehat{\AutL}  \to \AutL$ has a canonical section (which is not, in general, a morphism of group functors). For a commutative ring $A$
this section is
$$
\AutL(A)  \lrto \widehat{\AutL}(A)  \quad : \quad g \longmapsto (g, d_g)  \, \mbox{.}
$$
The composition of this map with $\pi$ gives a section of the map \linebreak ${\widetilde{\AutL}(A) \to \AutL(A) }$. This section is not, in general,
a group homomorphism. In the usual way,  this gives a $2$-cocycle for the central extension~\eqref{det-cent-ext}
\begin{equation}  \label{coc-D}
D(f,g) = \det (d_f \cdot d_g  \cdot d_{fg}^{-1}  )  \, \mbox{,}
\end{equation}
where  $f,g \in \AutL(A) $,    $d_{fg}= c_f \cdot b_g + d_f  \cdot d_g$ (recall that we use  notation from formula~\eqref{matr}). We note that there is an integer $n \ge 0$ such that
$(d_f \cdot d_g \cdot d_{fg}^{-1})  |_{t^n A[[t]]}  = {\rm id}$. Therefore the determinant $\det$ in  formula~\eqref{coc-D}  is well-defined.

Thus, we have proved the following Proposition.
\begin{prop}
The determinantal central extension of group functor $\AutL$ by group functor $\GG_m$ is defined by
 a $2$-cocycle $D$ given by formula~\eqref{coc-D}.
\end{prop}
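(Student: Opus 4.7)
My plan is to read off the cocycle directly from the morphism of extensions and the canonical (non-group) section $g \mapsto (g, d_g)$ of $\widehat{\AutL}(A) \to \AutL(A)$ that was just constructed. By the general principle recalled in Proposition~\ref{Ext-funct}, once we have a set-theoretic section $\sigma$ of a central extension, the defining $2$-cocycle is $\Lambda(f,g) = \sigma(f)\sigma(g)\sigma(fg)^{-1}$. So I would set $\sigma(g) = \pi\bigl((g,d_g)\bigr)$ (the composition already displayed in the paper) and compute $\Lambda$ by lifting the computation to $\widehat{\AutL}(A)$, where the group law is componentwise inside $\AutL(A) \times \GL_A(A[[t]])$.

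The core calculation is then straightforward: in $\AutL(A) \times \GL_A(A[[t]])$ one has
\begin{equation*}
(f, d_f)\,(g, d_g)\,(fg, d_{fg})^{-1} = \bigl(e,\; d_f\, d_g\, d_{fg}^{-1}\bigr).
\end{equation*}
Before concluding, I must check two things. First, $(f,d_f)(g,d_g) = (fg, d_f d_g)$ must actually lie in the subgroup $\widehat{\AutL}(A)$, i.e.\ $d_f d_g$ must agree with $d_{fg} = c_f b_g + d_f d_g$ on some $t^n A[[t]]$; equivalently $c_f b_g$ must vanish there. Second, $d_f d_g d_{fg}^{-1}$ must lie in $\GL_f(A)$, i.e.\ restrict to the identity on some $t^n A[[t]]$; this is really the same statement. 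Once both are verified, applying $\pi$ gives $\pi((e, d_f d_g d_{fg}^{-1})) = \det(d_f d_g d_{fg}^{-1}) = D(f,g) \in A^*$, which is precisely the formula to be proved.

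The main obstacle is the finiteness claim on $c_f b_g$. The point is that $b_g \colon A[[t]] \to t^{-1}A[t^{-1}]$ is the projection onto the negative part of $g|_{A[[t]]}$, and by formula~\eqref{auto} the coefficients of $\widetilde{g}$ of non-positive degree generate a finitely generated nilpotent ideal $I \subset A$. Hence the negative part of $g(t^n) = \widetilde{g}(t)^n$ lies in $I^n((t))$, and for $n$ sufficiently large $I^n = 0$, so $b_g$ vanishes on $t^n A[[t]]$. Combined with continuity of $c_f$, this forces $c_f b_g$ to vanish on $t^n A[[t]]$ for $n \gg 0$, which simultaneously places $(fg, d_f d_g)$ in $\widehat{\AutL}(A)$ and guarantees $d_f d_g d_{fg}^{-1} \in \GL_f(A)$, legitimizing the determinant in \eqref{coc-D}. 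Functoriality in $A$ is automatic since every construction (the section $g \mapsto d_g$, the determinant, and the morphism of extensions) is defined compatibly with ring homomorphisms, so we obtain the stated cocycle on the group functor $\AutL$ with values in $\GG_m$.
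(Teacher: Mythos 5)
Your proposal is correct and takes essentially the same route as the paper, which proves the proposition by exactly this construction: the canonical section $g \mapsto (g, d_g)$ of $\widehat{\AutL}(A) \to \AutL(A)$, composition with $\pi$ (equal to $\det$ on the kernel $\GL_f(A)$), and the componentwise computation $(f,d_f)(g,d_g)(fg,d_{fg})^{-1} = (e,\, d_f d_g d_{fg}^{-1})$, together with the observation that $c_f b_g$ kills $t^nA[[t]]$ for $n \gg 0$. One small correction to your finiteness argument: the negative part of $\widetilde{g}^{\,n}$ lies only in $I^{\lceil (n+1)/(m+1)\rceil}\, t^{-1}A[t^{-1}]$ (where $t^{-m}$ is the lowest-order term of $\widetilde{g}$), not in $I^n$, though this still vanishes for $n \gg 0$ by nilpotence of $I$; more directly, continuity of $g$ alone gives $g(t^nA[[t]]) \subset A[[t]]$ for $n \gg 0$, hence $b_g|_{t^nA[[t]]} = 0$, which is the route the paper implicitly takes.
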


\begin{nt} \em
Formula~\eqref{coc-D} is an algebraic formal analog of  formula from Proposition~(6.6.4) of~\cite{PS} form the theory of smooth loop groups.
\end{nt}

\section{Corresponding cocycles on Lie algebras} \label{Sec-coc-Lie}

In Appendix A we collected statements on how to construct Lie algebra valued functors and Lie algebra $2$-cocycles from
group ind-schemes (or more generally, group functors with some conditions) and $2$-cocycles on group ind-schemes. Using these statements,
in this section we calculate the  $2$-cocycles   $\Lie \hat{B}$  and $\Lie D$
on Lie algebra valued functors
that correspond to the $2$-cocycles $\hat{B} $  and $D$ on the group { ind-scheme}
 $\AutL$
 with coefficients in $\GG_m$
from Sections~\ref{FB} and~\ref{DCE} correspondingly.

\subsection{Lie algebra of continuous derivations}  \label{Sec-der}
We calculate in this section the Lie algebra valued functor $\Lie \AutL$ of the group  ind-scheme $\AutL$.

Let $A$ be a commutative ring.
 It is easy to see (see Appendix~A.1--A.2)  that $\Lie \AutL (A) $  consists of elements $\mu \in \AutL (A[\varepsilon]/ (\varepsilon^2))$ such that the corresponding element $\widetilde{\mu} = \mu(t)$ from   $(A[\varepsilon]/(\varepsilon^2))((t))$ equals  $t + g \ve  $, where $g$ is any element from $A((t))$, see notation in Section~\ref{Autgr}.

 Hence we can identify the underlying  $A$-module of $\Lie \AutL (A) $ with  the $A$-module of all continuous $A$-derivations of the $A$-algebra
 $A((t))$. Indeed, for any ${f, g \in A((t))}$ we have in the ring $(A[\ve]/ (\ve^2))((t))$ by Taylor formula that
 $$
 f \circ (t +  g \ve)  = f + g f' \ve   \,  \mbox{.}
 $$
 Therefore the continuous $A[\ve]/ (\ve^2)$-automorphisms  $t \mapsto t + g \ve$ of the $A[\ve]/(\ve^2)$-algebra $(A[\ve]/ (\ve^2))((t))$  corresponds to continuous
 $A$-derivations $g \frac{\partial}{\partial t}$ of the $A$-algebra $A((t))$.

Now we calculate the Lie bracket in the Lie algebra $\Lie \AutL (A)$. For this goal we calculate in the ring
$(A[\ve_1, \ve_2]/ (\ve_1^2, \ve_2^2))((t))$ for $g_1, g_2  \in A((t))$:
$$
(t + g_1\ve_1 ) \circ (t + g_2 \ve_2 ) = t + g_2 \ve_2  + g_1 \ve_1  + g_2 g_1' \ve_1 \ve_2  \, \mbox{.}
$$
Hence we have that
$$
(t + g_2\ve_2 ) \circ (t + g_1 \ve_1 )   = (t + g_1\ve_1 ) \circ (t + g_2 \ve_2 ) \circ  (t + (g_1 g_2' - g_2 g_1') \ve_1 \ve_2)  \, \mbox{.}
$$

Therefore for any $\mu_1, \mu_2  \in \Lie \AutL (A) $  such that $\widetilde{\mu_i} = t + g_i \ve $ (where $i=1$ and $i=2$)  we obtain
(see formula~\eqref{ff} in Appendix A.2)    the Lie bracket ${[\mu_1, \mu_2]  \in \Lie \AutL (A)  }$  such that
$$
\widetilde{[\mu_1, \mu_2]} = t +  (g_1 g_2' -g_2 g_1') \ve_1 \ve_2   \, \mbox{.}
$$
The derivation $(g_1 g_2' -g_2 g_1') \frac{\partial}{\partial t}$ is equal to the derivation $\left[g_1 \frac{\partial}{\partial t}, \, g_2 \frac{\partial}{\partial t} \right]$, where $[\cdot, \cdot]$
is the usual commutator bracket on derivations.

Thus,  the Lie algebra $\Lie \AutL (A)$ over the ring $A$ is naturally identified with the Lie algebra of continuous $A$-derivations of the ring
 $A((t))$ with the usual Lie bracket for derivations. (We note that we also used
  formula~\eqref{tld}.)

\begin{nt} \em  \label{dence}
In the Lie $A$-algebra $\Lie \AutL (A)$ of continuous $A$-derivations of $A((t))$ we have the dense $A$-subalgebra with the basis $L_n = t^{n+1} \frac{\partial}{\partial t}$, $n \in \dz$, with the bracket ${[L_n, L_m] = (m -n) L_{n+m}}$.
\end{nt}

\subsection{$2$-cocycles  $\Lie \hat{B}$  and $\Lie D$  on Lie algebra valued functors}  \label{sec-theor1}

We note that $\Lie \GG_m  = \GG_a$, where $\GG_a(A) = A$ with the zero Lie bracket.

We calculate now explicitly the   $2$-cocycle $\Lie \hat{B}$ on Lie algebra valued functor $\Lie \AutL$
with coefficients in $\GG_a$ that corresponds to the $2$-cocycle $\hat{B}$ on the  group { ind-scheme}.

For any commutative ring $A$ we identify  the Lie algebra $\Lie \AutL (A)$ over the ring $A$ with the Lie algebra of continuous $A$-derivations of the ring
 $A((t))$.

\begin{prop}  \label{LieB} Let $A$ be a commutative ring.
For any  elements
$g_i \in A((t))$, where $i=1$ and $i=2$, we have
\begin{equation}  \label{formLieB}
\Lie \hat{B}  \left(g_1 \frac{\partial}{\partial t}, \, g_2 \frac{\partial}{\partial t} \right) = 2 \res (g_1' \cdot d g_2')  \, \mbox{,}
\end{equation}
where we recall that $g_i' = \frac{\partial g_i}{\partial t} $.
\end{prop}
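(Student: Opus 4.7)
The plan is to substitute $\widetilde{\varphi_i} = t + g_i\ve_i$ (for $i=1,2$) into formula~\eqref{FBT}, work in the ring $A[\ve_1,\ve_2]/(\ve_1^2,\ve_2^2)$, and extract the coefficient of $\ve_1\ve_2$ governing the Lie algebra cocycle. According to the general prescription of Appendix~A, and using that $\hat{B}$ is normalized ($\hat{B}(\varphi,{\rm id}) = \hat{B}({\rm id},\varphi) = 1$), the quantity $\Lie\hat{B}(g_1\partial_t, g_2\partial_t)$ is obtained as the $\ve_1\ve_2$-coefficient of the antisymmetrized expression $\hat{B}(\varphi_1,\varphi_2) - \hat{B}(\varphi_2,\varphi_1)$, read additively through $\log$ in the target $1 + A\ve_1\ve_2 \subset (A[\ve_1,\ve_2]/(\ve_1^2,\ve_2^2))^*$.

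The central computation is a short Taylor expansion. One has $\widetilde{\varphi_1}' = 1 + g_1'\ve_1$, while the chain rule together with $\ve_1^2=0$ gives
\[
\widetilde{\varphi_2}'\circ\widetilde{\varphi_1} \;=\; 1 + g_2'(t+g_1\ve_1)\,\ve_2 \;=\; 1 + g_2'\ve_2 + g_1 g_2''\,\ve_1\ve_2.
\]
Both factors lie in $(L\GG_m)^\sharp$ over the square-zero extension (their differences from $1$ are nilpotent), so the Contou-Carr\`ere symbol can be computed by~\eqref{CC-exp-log}. Crucially, $\ve_i^2=0$ truncates both $\log$ and $\exp$ series to linear polynomials, so no division by integers is needed and no $\Q$-assumption is required; alternatively, the identity to be proved is a $\Z$-polynomial identity in the coefficients of $g_1,g_2$, so checking it for $\Q$-algebras suffices. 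Expanding, $\log\widetilde{\varphi_1}' = g_1'\ve_1$ and $\log(\widetilde{\varphi_2}'\circ\widetilde{\varphi_1}) = g_2'\ve_2 + g_1 g_2''\,\ve_1\ve_2$; the product with $d\log(\cdot)$ collapses to $g_1'g_2''\,dt\cdot\ve_1\ve_2$ since every other term carries a factor $\ve_1^2$ or $\ve_2^2$. Consequently
\[
\hat{B}(\varphi_1,\varphi_2) \;=\; 1 + \res\bigl(g_1'\,dg_2'\bigr)\,\ve_1\ve_2.
\]

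To finish, I would antisymmetrize: by the same computation with the roles of $\varphi_1,\varphi_2$ swapped, one obtains
\[
\Lie\hat{B}(g_1\partial_t,g_2\partial_t) \;=\; \res(g_1'\,dg_2') - \res(g_2'\,dg_1').
\]
Integration by parts on $\widetilde{\Omega}^1_{A((t))}$---namely $\res d(g_1'g_2')=0$, equivalent to $\res(g_1''g_2'\,dt) = -\res(g_1'g_2''\,dt)$---rewrites the second residue as $-\res(g_1'\,dg_2')$, producing the factor $2$ on the right-hand side of~\eqref{formLieB}.

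The main obstacle, in my view, is purely bookkeeping: one must correctly identify the convention (to be found in Appendix~A) that $\Lie$ of a normalized group $2$-cocycle is the antisymmetrization, for that is what supplies the factor~$2$. Without antisymmetrization the naive coefficient of $\ve_1\ve_2$ in $\hat{B}(\varphi_1,\varphi_2)$ is only $\res(g_1'\,dg_2')$. Once the convention is fixed, the remainder is a short Taylor expansion combined with a single integration by parts.
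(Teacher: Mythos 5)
Your proposal is correct and follows essentially the same route as the paper: the same Taylor expansion of $\widetilde{\varphi_2}'\circ\widetilde{\varphi_1}$, the same antisymmetrization prescription from Appendix~A.3 for passing to the Lie algebra cocycle, and the same integration by parts $\res\,d(g_1'g_2')=0$ supplying the factor $2$. The only cosmetic difference is that the paper disposes of the cross term $1+g_1g_2''\ve_1\ve_2$ by a functoriality trick and invokes formula~\eqref{CC-form} for general $A$, whereas you absorb it into the truncated $\log$ and fall back on reduction to $\Q$-algebras, which is equally valid.
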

\begin{proof}
Let $\Lie \hat{B} \left(g_1\frac{\partial}{\partial t}, \, g_2 \frac{\partial}{\partial t} \right)  = b \in A$.
Then,  according to Proposition~\ref{ex-form-coc} from Appendix A.3, the element $b $ is defined in the following way.

For every $i=1$ and $i=2$ we consider  the rings $E_i = A[\ve_i]/ (\ve_i^2)$ and elements
$\alpha_i \in \AutL(E_i((t)))$ such that $\widetilde{\alpha_i} = \alpha_i(t)= t + g_i \ve_i$.  We consider  $E = A[\ve_1, \ve_2]/ (\ve_1^2, \ve_2^2)$
and $E((t))$ that contains the rings $E_i((t))$.
Then we have
$$
\hat{B}(\alpha_1, \alpha_2) \cdot \hat{B}(\alpha_2, \alpha_1)^{-1}  =  1 + b \hspace{0.2pt} \ve_1 \ve_2  \in E^*    \, \mbox{.}
$$

By formula~\eqref{FBT}   we have
$$
\hat{B}(\alpha_1, \alpha_2 )=
\CC (\widetilde{\alpha_1}', \,  \widetilde{\alpha_2}'\circ \widetilde{\alpha_1})
= \CC(1 + g_1' \ve_1,   (1 + g_2' \ve_2) \circ (t + g_1 \ve_1))  \, \mbox{.}
$$

Using the Taylor formula, it is easy to see that
$$
(1+ g_2' \ve_2) \circ (t + g_1 \ve_1)= (1 + g_2' \ve_2)(1 + g_1 g_2'' \ve_1 \ve_2)
$$

We note that from the functoriality of the Contou-Carr\`ere symbol $\CC$ we have in the ring
$E((t))$ for any $d_1, d_2 \in A((t))$  an equality $\CC(1+ d_1 \ve_1, 1 + d_2 \ve_2) = 1 + d \ve_1 \ve_2$ for some $d \in A$.
Hence, using the $A$-algebra endomorphism $E \to E$, $\ve_1 \mapsto \ve_1$, $\ve_2  \mapsto \ve_1 \ve_2$, which induces the map of
$\CC(1 + g_1' \ve_1, 1 + g_1 g_2'' \ve_2)$ to
$
\CC(1 + g_1' \ve_1, 1 + g_1 g_2'' \ve_1 \ve_2)
$, we obtain that $
\CC(1 + g_1' \ve_1, 1 + g_1 g_2'' \ve_1 \ve_2)  =1
$.
Therefore and using the bimultiplicativity of $\CC$, we have that
$$
\hat{B}(\alpha_1, \alpha_2 )  =\CC(1 + g_1' \ve_1, \, 1 + g_2' \ve_2)  = 1 + \res (g_1' \cdot  d g_2') \ve_1 \ve_2  \, \mbox{,}
$$
where the last equality follows at once from formula~\eqref{CC-exp-log} when $\Q \subset A$ and from formula~\eqref{CC-form}  in the general case.

Hence we also have $\hat{B}(\alpha_2, \alpha_1 )^{-1} = 1 - \res (g_2' \cdot  d g_1') \ve_1 \ve_2 = 1+ \res (g_1' \cdot  d g_2') \ve_1 \ve_2  $.
Therefore $b = 2 \res (g_1' \cdot  d g_2')$.
\end{proof}

\begin{nt} \em
Proposition~\ref{LieB} is an algebraic formal analog of the   statement from the theory of infinite-dimensional Lie groups  that the Lie algebra $2$-cocycle that corresponds to the Bott-Thurston group $2$-cocycle is the doubled Gelfand-Fuks $2$-cocycle on the Lie algebra of smooth vector fields on the circle, see, e.g., definition-proposition~2.4 from chapter II of~\cite{KW}.
\end{nt}

We consider an element $s$ from the Lie algebra $\Lie \AutL (A)$, which is the Lie algebra of continuous $A$-derivations of $A((t))$, as a block matrix (cf. formula~\eqref{matr})
\begin{equation}  \label{matr2}
\begin{pmatrix}
a_s & b_s  \\
c_s & d_s
\end{pmatrix}
\end{equation}
with respect to the decomposition $A((t))= t^{-1} A[t^{-1}] \oplus A[[t]]$, and where the matrix acts  on an element-column from $A((t))$ on the left.

The following proposition is an algebraic formal analog of the statement from the theory of smooth loop groups,
see~\cite[Prop.~6.6.5]{PS}.
\begin{prop}
Let $A$ be a commutative ring.
For any $s,  r$ from $\Lie \AutL (A)$ we have
\begin{equation}  \label{formLieD}
\Lie D (s, r) =  \tr (  c_r \cdot b_s  - c_s \cdot b_r  )  \, \mbox{,}
\end{equation}
where the trace of the map  from $A[[t]]$ to $A[[t]]$
is well-defined, since there is an integer $n \ge 0$ such that $b_r$ and $b_s$  restricted to $t^n A[[t]]$ equal zero.
\end{prop}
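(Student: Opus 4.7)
The strategy is to apply the general recipe from Appendix~A.3 (used already in Proposition~\ref{LieB}): for $s,r\in\Lie\AutL(A)$, let $\alpha\in\AutL(A[\ve_1]/(\ve_1^2))$ and $\beta\in\AutL(A[\ve_2]/(\ve_2^2))$ be the corresponding infinitesimal automorphisms, viewed inside $\AutL(E((t)))$ with $E=A[\ve_1,\ve_2]/(\ve_1^2,\ve_2^2)$. Then $\Lie D(s,r)\in A$ is read off from
$$
D(\alpha,\beta)\cdot D(\beta,\alpha)^{-1}=1+\Lie D(s,r)\,\ve_1\ve_2\in E^*.
$$
So the whole task is an explicit two-step computation of the block-matrix expressions $d_\alpha\cdot d_\beta\cdot d_{\alpha\beta}^{-1}$ inside $1+\ve_1\ve_2\,\End_A(A[[t]])$.

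The first step is to identify operators with block matrices. As operators on $E((t))$ one has $\alpha=\mathrm{id}+s\,\ve_1$ and $\beta=\mathrm{id}+r\,\ve_2$ (linear because $\ve_i^2=0$), and group multiplication in $\AutL$ corresponds to ordinary composition of operators, so with respect to the decomposition $E((t))=\ve\text{-coefficients of } t^{-1}A[t^{-1}]\oplus A[[t]]$ the matrices multiply as $M_{\alpha\beta}=M_\alpha M_\beta$. Writing $S=\begin{pmatrix}a_s&b_s\\ c_s&d_s\end{pmatrix}$ and $R=\begin{pmatrix}a_r&b_r\\ c_r&d_r\end{pmatrix}$, we get $M_\alpha M_\beta=I+\ve_1 S+\ve_2 R+\ve_1\ve_2 SR$, and the lower-right block is
$$
d_{\alpha\beta}=I+\ve_1 d_s+\ve_2 d_r+\ve_1\ve_2(c_s b_r+d_s d_r).
$$
On the other hand $d_\alpha d_\beta=I+\ve_1 d_s+\ve_2 d_r+\ve_1\ve_2\,d_s d_r$, so the difference between the block $(SR)_{22}$ and the product $d_s d_r$ is precisely the correction term $c_s b_r$.

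The second step is to invert $d_{\alpha\beta}$ modulo $\ve_1^2,\ve_2^2$ and multiply. Setting $Y=d_{\alpha\beta}-I$, one has $Y^3=0$ and $Y^2=\ve_1\ve_2(d_s d_r+d_r d_s)$, so $d_{\alpha\beta}^{-1}=I-\ve_1 d_s-\ve_2 d_r+\ve_1\ve_2(d_r d_s-c_s b_r)$. A short expansion, in which the purely $\ve_1$ and $\ve_2$ terms and the terms $d_s d_r,\,d_r d_s$ all cancel, gives
$$
d_\alpha\cdot d_\beta\cdot d_{\alpha\beta}^{-1}=I-\ve_1\ve_2\,c_s b_r,
$$
and therefore $D(\alpha,\beta)=\det(I-\ve_1\ve_2\,c_s b_r)=1-\ve_1\ve_2\,\tr(c_s b_r)$. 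The operator $c_s b_r$ factors through a finitely generated projective $A$-summand of $A[[t]]$ (since $b_r$ kills $t^n A[[t]]$ for $n$ large enough, by continuity of the derivation $r$), so both the trace and the use of $\det(I+\ve_1\ve_2 T)=1+\ve_1\ve_2\,\tr T$ are legitimate. The same calculation with $s,r$ swapped gives $D(\beta,\alpha)=1-\ve_1\ve_2\,\tr(c_r b_s)$, and combining the two yields
$$
\Lie D(s,r)=\tr(c_r b_s)-\tr(c_s b_r)=\tr(c_r\cdot b_s-c_s\cdot b_r),
$$
as claimed. There is no real obstacle here beyond careful bookkeeping; the content of the proof is that the $c_s b_r$ correction arises precisely from the off-diagonal blocks in the matrix product $SR$, which is why the $(d,d)$-block of $M_\alpha M_\beta$ is not simply $d_\alpha d_\beta$.
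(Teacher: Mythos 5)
Your proposal is correct and follows essentially the same route as the paper: compute $D(\mathrm{id}+s\ve_1,\mathrm{id}+r\ve_2)$ over $A[\ve_1,\ve_2]/(\ve_1^2,\ve_2^2)$, obtain $\det(\mathrm{id}-c_sb_r\ve_1\ve_2)=1-\tr(c_sb_r)\ve_1\ve_2$, antisymmetrize, and invoke Proposition~A.3; the paper simply states this as a ``direct calculation,'' whereas you carry out the block-matrix bookkeeping (correctly, including the key point that the $(2,2)$-block of $M_\alpha M_\beta$ is $c_sb_r+d_sd_r$ rather than $d_sd_r$).
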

\begin{proof}
From formula~\eqref{coc-D} by direct calculations with $D$ over the ring $A[\ve_1, \ve_2]/ (\ve_1^2, \ve_2^2)$
it is easy to see   that
$$
D({\rm id} + s \ve_1, {\rm id}+ r \ve_2) = \det({\rm id} - c_s b_r \ve_1 \ve_2 )= 1 - \tr(c_s b_r) \ve_1 \ve_2  \, \mbox{,}
$$
where $\rm id$ is the identity map.
Hence we obtain that
$$
D({\rm id} + s \ve_1, {\rm id}+ r \ve_2) \cdot D({\rm id} + r \ve_2, {\rm id}+ s \ve_1)^{-1}  =  1 + \tr ( c_r \cdot b_s  - c_s \cdot b_r  ) \ve_1 \ve_2 \, \mbox{,}
$$
Now, by Proposition~\ref{ex-form-coc} from Appendix A.3, we obtain the statement.
\end{proof}

Now we can compare $2$-cocycles $\Lie D$ and  $\Lie \hat{B}$   on the Lie algebra valued functor $\Lie \AutL$ with coefficients in $\GG_a$.

\begin{Th}  \label{theor-1}
We have the following equality of $2$-cocycles on the Lie algebra valued functor $\Lie \AutL$ with coefficients in $\GG_a$:
$$
12 \Lie D  = \Lie  \hat{B}   \, \mbox{.}
$$
\end{Th}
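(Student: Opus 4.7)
The plan is to verify the equality $12 \Lie D = \Lie \hat{B}$ by evaluating both cocycles on the topologically dense Witt-type $A$-subalgebra of $\Lie \AutL(A)$ spanned by the basis $L_n = t^{n+1}\frac{\partial}{\partial t}$, $n \in \dz$, recalled in Remark~\ref{dence}, and then extending the identity by $A$-bilinearity to the whole Lie algebra.

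First, I would compute $\Lie\hat{B}(L_n, L_m)$ directly from Proposition~\ref{LieB}. With $g_1 = t^{n+1}$ and $g_2 = t^{m+1}$, one has $g_1' = (n+1) t^n$ and $dg_2' = (m+1) m\, t^{m-1}\, dt$, so $g_1'\, dg_2' = (n+1)(m+1)m\, t^{n+m-1}\, dt$. Taking the residue yields $\Lie\hat{B}(L_n, L_m) = 2 n(n^2-1)\, \delta_{n+m,0}$.

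Next, I would compute $\Lie D(L_n, L_m)$ using the trace formula \eqref{formLieD}. Since $L_n(t^k) = k\, t^{n+k}$, a direct inspection of the block decomposition $A((t)) = t^{-1}A[t^{-1}] \oplus A[[t]]$ gives $b_{L_n} = 0$ for $n \ge 0$ and $c_{L_n} = 0$ for $n \le 0$. Consequently $\tr(c_{L_m} b_{L_n} - c_{L_n} b_{L_m})$ vanishes unless one of $n, m$ is strictly positive and the other strictly negative. Taking $n \ge 1$ and $m \le -1$, the operator $b_{L_m}$ sends $t^k \mapsto k\, t^{m+k}$ for $0 \le k \le -m-1$, and $c_{L_n}$ then sends this to the $A[[t]]$-part of $k(m+k)\, t^{n+m+k}$; the $t^k$-diagonal entry is nonzero only when $n + m = 0$, forcing $m = -n$. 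Evaluating the diagonal sum,
\[
\tr(c_{L_n} b_{L_{-n}}) \;=\; \sum_{k=0}^{n-1} k(k-n) \;=\; -\sum_{k=1}^{n-1} k(n-k) \;=\; -\frac{n(n-1)(n+1)}{6},
\]
where the last step is the classical identity $\sum_{k=1}^{n-1} k(n-k) = \frac{n^3-n}{6}$. Hence $\Lie D(L_n, L_m) = \frac{n^3 - n}{6}\, \delta_{n+m, 0}$.

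Comparing, $12 \cdot \frac{n^3 - n}{6} = 2(n^3 - n) = 2n(n^2-1)$, so $12\, \Lie D(L_n, L_m) = \Lie \hat{B}(L_n, L_m)$ for all $n, m \in \dz$. Both cocycles are $A$-bilinear and, on any pair $(g_1 \frac{\partial}{\partial t}, g_2 \frac{\partial}{\partial t})$, depend only on finitely many Laurent coefficients of $g_1$ and $g_2$ (the residue formula of Proposition~\ref{LieB} is a finite sum by the Laurent condition, and the trace in \eqref{formLieD} is the trace of a map of finite-rank type for the same reason). Therefore the identity on the dense generating set $\{L_n\}$ extends to all pairs in $\Lie\AutL(A)$, and by functoriality to all commutative rings $A$. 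I expect the main obstacle to be the careful index bookkeeping in the block-matrix trace calculation of $\Lie D(L_n, L_m)$—in particular, verifying that the selection rule forces $n+m = 0$ with the right sign, and that the combinatorial identity produces the factor $\tfrac{1}{6}$ which, against the factor $2$ from Proposition~\ref{LieB}, yields exactly the universal ratio $12$.
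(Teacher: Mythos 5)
Your proposal is correct and follows essentially the same route as the paper's proof: both reduce to the Witt elements $L_n=t^{n+1}\frac{\partial}{\partial t}$ via continuity/finite dependence of the two cocycles on the Laurent coefficients, compute $\Lie\hat{B}(L_n,L_{-n})=2(n^3-n)$ from the residue formula and $\Lie D(L_n,L_{-n})=\frac{n^3-n}{6}$ from the block-matrix trace (using $\sum_{k=1}^{n-1}k(n-k)=\frac{n^3-n}{6}$), and compare. Your signs and the selection rule $n+m=0$ agree with the paper's computation.
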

\begin{proof}
Let $A$ be a commutative ring.
From formulas~\eqref{formLieB} and~\eqref{formLieD} it is easy to see that
 $\Lie \hat{B}$ and $\Lie D$ are    continuous maps in each argument from ${A((t)) \times A((t))}$
to $A$. (Here the continuous map in each argument means that we fix one argument of the map then the resulting  map   is continuous in another argument. Besides, we note that in formula~\eqref{formLieD} both maps $\tr ( c_s \cdot b_r)$ and $\tr (c_r \cdot b_s) $ are continuous in each argument.)
 Therefore  the maps
 $\Lie \hat{B}$ and $\Lie D$
 are uniquely defined by values on pairs of elements: $L_m = t^{m+1} \frac{\partial}{\partial t}$  and  $L_n = t^{n+1} \frac{\partial}{\partial t}$.

By direct calculation with formula~\eqref{formLieB} we obtain
$$
\Lie \hat{B} \left( L_m, L_n \right) = -2 (m - m^3) \cdot \delta_{n, -m}  \, \mbox{,}
$$
where $\delta_{i,j} =1$ if $i=j$, and $\delta_{i,j} =0$ if $i \ne j$.

We write $L_n$ as an infinite matrix $({L_{n,}}_{ij})$, where $L_n (t^j) = \sum_i {L_{n,}}_{ij} t^i$. The only matrix elements that are non-equal to zero are ${L_{n,}}_{ij}= j$ when $i = j+n$. Therefore we obtain that if $m \le 0$, then
$$\tr ( c_{L_n} \cdot b_{L_m}) = \delta_{n,-m} \cdot \sum_{j=0}^n  j (j -n) =   \frac{m^3 -m}{6}  \cdot \delta_{n, -m}$$
and $ \tr(  c_{L_m} \cdot b_{L_n}) =0$  (see notation in~\eqref{matr2}). If $m \ge 0$, then
$$\tr ( c_{L_m} \cdot b_{L_n}) = \delta_{n,-m} \cdot \sum_{j=-n}^0  j (j -n) =  \frac{m - m^3}{6}   \cdot  \delta_{n, -m}$$
and $ \tr(  c_{L_n} \cdot b_{L_m}) =0$. Hence and from formula~\eqref{formLieD} we obtain
$$
12 \Lie D \left( L_m, L_n \right) = - 12 \cdot \frac{m - m^3}{6}  \cdot \delta_{n, -m} = \Lie \hat{B} \left( L_m, L_n \right)  \, \mbox{.}
$$
\end{proof}

\section{Comparison of central extensions}  \label{Sec-comp}

We will assume further in this section that all the commutative rings are $\Q$-algebras.

Correspondingly, we will assume in this section that all the ind-schemes are defined over $\Q$ and functors are functors from the category of $\Q$-algebras.
For an ind-scheme $G$ defined over $\dz$ (or for the functor, which we denote by the same letter and which is representable by this ind-scheme)    we denote by $G_{\Q}$ the ind-scheme (or the corresponding functor) obtained by the extensions of scalars to $\Q$ (the functor restricted  to the category of $\Q$-algebras).

By a group ind-scheme $G$ we can construct the corresponding Lie algebra valued functor $\Lie G$ (see Appendix A) and the corresponding Lie algebra $ \Lie G(\Q)$ over $\Q$.

\subsection{Infinitesimal formal groups}  \label{Sec-form}

By {\em an infinitesimal  formal group  over $\Q$} we mean a group ind-scheme \linebreak ${G = \mbox{``$\varinjlim\limits_{i \in I}$''} \Spec A_i}$
 such that every $A_i$ is a finite-dimensional $\Q$-algebra and
 the corresponding profinite algebra of regular functions $\oo(G)= \varprojlim_{i \in I} A_i$ is a local $\Q$-algebra with the residue field $\Q$, see more
 on this notion, e.g,  in~\cite{Die}.

Now the functor
$G \mapsto \Lie G (\Q)$ gives an equivalence of the category of infinitesimal formal groups over  $\Q$ and the category of Lie
algebras over $\Q$ (which can be infinite-dimensional over $\Q$). Moreover, this is also true over any ground field of  characteristic zero,
see  section 3.3.2 in Expose $\rm VII_B$ written by P. Gabriel in~\cite{SGA3}.

In our case we have  the natural example of infinitesimal formal group. We consider the group functor ${\ff\AutL}_{\Q}$ such that
for any $\Q$-algebra $A$ the subgroup
$
{\ff\AutL}_{\Q}(A)
$
consists of elements $\varphi$ of the group $  \AutL_{\Q}(A)$ such that ${\widetilde{\varphi} = \varphi(t) \in A((t))}$
is an element of the following kind:
$$
t + \sum_i c_i t^i  \, \mbox{,}
$$
where all elements $c_i \in A$ are nilpotent elements and they are equal zero except for a finite number of elements.
(The statement  that ${\ff\AutL}_{\Q}$ is a  group subfunctor of $  \AutL_{\Q}$ it is easy to see. Indeed, the most nontrivial is to prove that if
 $\phi \in {\ff\AutL}_{\Q}(A)$, then $\phi^{-1}  \in {\ff\AutL}_{\Q}(A)$. This follows from the proof of Proposition~\ref{Decomp}, since
 $\phi = \phi_- \phi_+ $, where $\phi_- \in \AutLm_{\Q}(A)  \subset {\ff\AutL}_{\Q}(A)  $ and  the element  $\phi_+ $ belongs to the subset
 $ \AutLp_{\Q}(A) \cap {\ff\AutL}_{\Q}(A)$ of the group  $\AutL_{\Q}(A)$ and this subset is a subgroup.)

It is easy to see that the group functor ${\ff\AutL}_{\Q}$ is represented by the following ind-scheme
\begin{equation}  \label{ind-pr}
\mbox{``$\varinjlim\limits_{\{\epsilon_i\}}$''} \Spec \Q [c_i; i \in \dz]/ I_{\{\epsilon_i\}}  \, \mbox{,}
\end{equation}
where $\Q [c_i; i \in \dz]$ is the polynomial ring over $\Q$ on a set of variables $c_i$ with $i \in \dz$, and the limit is taken over all the sequences $\{\epsilon_i\}$ with $i \in \dz$ and $\epsilon_i$ are nonnegative integers such that all but finitely many $\epsilon_i$ equal zero, the ideal $I_{\{\epsilon_i\}}$ is generated by elements $c_i^{\epsilon_i +1}$ for all $i \in \dz$. Thus, ${\ff\AutL}_{\Q}$ is an infinitesimal formal group over $\Q$.

Clearly, we have the natural homomorphism  between the group ind-schemes \linebreak ${\ff\AutL}_{\Q}  \to \AutL_{\Q}$ that is an embedding on $A$-points for any commutative $\Q$-algebra $A$.

Now it is easy to see that the Lie algebra $\Lie \ff\AutL_{\Q} (\Q)$ over $\Q$ has a basis $L_n$ with $n \in \dz$
and the Lie bracket is $[L_n, L_m] = (m-n) L_{m+n}$, where $L_n = t^{n+1} \frac{\partial}{\partial t}$ is a continuous derivation of the algebra $\Q((t))$, cf. Remark~\ref{dence}.

\subsection{Central extensions of $\AutL_{\Q}$ by ${\GG_m}_{\Q}$}  \label{Sec-centr}

For a commutative group ind-scheme $F$ and a    group ind-scheme $G$, an element from $H^2(G, F)$ is an equivalence class of  central extensions of $G$ by $F$ that allow a section, see Remark~\ref{central}.
A central extension from this class defines a group ind-scheme and it gives a central extension of corresponding Lie algebras over $\Q$. Therefore we obtain the homomorphism
from  $H^2(G, F)$ to $H^2(\Lie G (\Q)), \Lie F(\Q))$ given on $2$-cocycles by  formula from Proposition~\ref{ex-form-coc}  from Appendix~A.3.

The following theorem is an algebraic analog of the corresponding statement in the theory of infinite-dimensional Lie  groups, see Corollary~(7.5) from~\cite{Se}.

\begin{Th}  \label{Th-2-coc}
An element from $H^2(\AutL_{\Q}, {\GG_m}_{\Q})$, where $\AutL_{\Q}$ acts trivially on ${\GG_m}_{\Q}$, is uniquely defined by its image in
$H^2(\Lie \AutL (\Q), \Q )$ together with  its  restriction  to $H^2 ( \AutLp_{\Q} , {\GG_m}_{\Q})$ (see Section~\ref{more}).
\end{Th}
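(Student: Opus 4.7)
The plan is to prove the theorem as an injectivity statement: it suffices to show that a $2$-cocycle $\Lambda$ on $\AutL_\Q$ with values in ${\GG_m}_\Q$ whose image in $H^2(\Lie\AutL(\Q), \Q)$ is trivial and whose restriction to $\AutLp_\Q$ is a coboundary must itself be a coboundary. By modifying $\Lambda$ by a coboundary, I may assume from the outset that $\Lambda|_{\AutLp_\Q \times \AutLp_\Q} = 1$, so that the associated central extension $\widetilde{\AutL}_\Q \to \AutL_\Q$ (Proposition~\ref{Ext-funct}) admits a group-homomorphism splitting $\sigma_+$ over $\AutLp_\Q$. The task then becomes to extend $\sigma_+$ to a group-homomorphism splitting of the full extension $\widetilde{\AutL}_\Q$.

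The key structural input is the unique decomposition $\AutL_\Q = \AutLm_\Q \cdot \AutLp_\Q$ from Proposition~\ref{Decomp}, together with the observation that $\AutLm_\Q$ is an infinitesimal formal group over $\Q$ in the sense of Section~\ref{Sec-form}: parametrizing elements by their nilpotent coefficients $a_{-n}, \ldots, a_{-1}$ presents $\AutLm_\Q$ as an ind-limit of spectra of finite-dimensional local $\Q$-algebras with residue field $\Q$, entirely analogous to formula~\eqref{ind-pr}. Via Gabriel's equivalence between infinitesimal formal groups over $\Q$ and Lie algebras over $\Q$ (applied to the $\log$ of $2$-cocycles valued in ${\GG_m}_\Q$, which is well-defined on nilpotent elements), central extensions of $\AutLm_\Q$ by ${\GG_m}_\Q$ correspond bijectively to central extensions of $\Lie\AutLm(\Q)$ by $\Q$. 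Since $\Lie\Lambda$ is a coboundary, so is its restriction to $\Lie\AutLm(\Q)$, and therefore the restriction of $\widetilde{\AutL}_\Q$ to $\AutLm_\Q$ is a trivial central extension, yielding a group-homomorphism splitting $\sigma_-: \AutLm_\Q \to \widetilde{\AutL}_\Q$.

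Using the decomposition, I would then define a set-theoretic section $\sigma(g_- g_+) = \sigma_-(g_-)\sigma_+(g_+)$ and aim to show it is a group homomorphism; the latter would exhibit $\Lambda$ as a coboundary and finish the proof. The main obstacle is verifying the conjugation compatibility: for $g_+ \in \AutLp_\Q$ and $h \in \AutLm_\Q$, write $g_+ h g_+^{-1} = k_- k_+$ using the decomposition; one then needs $\sigma_+(g_+) \sigma_-(h) \sigma_+(g_+)^{-1} = \sigma_-(k_-)\sigma_+(k_+)$. Both sides lift the same element of $\AutL_\Q$, so they differ by a factor defining, for fixed $g_+$, a group homomorphism $\chi_{g_+}: \AutLm_\Q \to {\GG_m}_\Q$. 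My strategy to resolve this is to normalize $\sigma_-$ (by multiplying by a character of $\AutLm_\Q$, available through Gabriel's equivalence) so that its associated Lie algebra splitting matches the restriction of a preselected global Lie algebra splitting $\tilde\sigma: \Lie\AutL(\Q) \to \Lie\widetilde{\AutL}(\Q)$ that is also compatible with $\sigma_+$. Under this normalization, the Lie algebra version of $\chi_{g_+}$ vanishes by the adjoint equivariance of $\tilde\sigma$, and a second application of Gabriel's equivalence then forces $\chi_{g_+} = 1$, thus completing the argument.
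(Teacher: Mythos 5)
Your setup is right: reducing to the case where $\Lambda|_{\AutLp_{\Q}\times\AutLp_{\Q}}=1$, using the decomposition of Proposition~\ref{Decomp}, splitting the extension over the infinitesimal formal group $\AutLm_{\Q}$ via the Lie-algebra hypothesis, and observing that the only obstruction to $\sigma(g_-g_+)=\sigma_-(g_-)\sigma_+(g_+)$ being multiplicative is the conjugation compatibility $\sigma_+(g_+)\sigma_-(h)\sigma_+(g_+)^{-1}=\sigma_-(k_-)\sigma_+(k_+)$. But the step where you dispose of that obstruction has a genuine gap, and it is a circularity. You claim that the discrepancy $\chi_{g_+}(h)=\sigma_+(g_+)\sigma_-(h)\sigma_+(g_+)^{-1}\cdot\bigl(\sigma(g_+hg_+^{-1})\bigr)^{-1}$ is a group homomorphism in $h$. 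The left factor is indeed a homomorphism, but the right factor is $\sigma\circ c_{g_+}$ with $\sigma$ only a set-theoretic section, and a direct computation gives
$$
\chi_{g_+}(h_1h_2)=\chi_{g_+}(h_1)\,\chi_{g_+}(h_2)\cdot\Lambda_{\sigma}\bigl(c_{g_+}(h_1),\,c_{g_+}(h_2)\bigr)\,\mbox{,}
$$
where $\Lambda_{\sigma}$ is the $2$-cocycle attached to $\sigma$. The extra factor vanishes precisely when $\sigma$ is multiplicative on the subgroup $g_+\AutLm_{\Q}(A)g_+^{-1}$ --- which is (a special case of) the statement you are trying to prove, since that subgroup is not contained in $\AutLm_{\Q}(A)$ (the decomposition is not semidirect, and conjugates of elements of $\AutLm_{\Q}(A)$ acquire infinitely many nilpotent coefficients in positive degrees). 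So the homomorphism property of $\chi_{g_+}$ is assumed, not established. A secondary problem: even granting it, $\chi_{g_+}$ is a morphism of group functors over the ring $A$ where $g_+$ lives, so the passage from $\Lie\chi_{g_+}=0$ to $\chi_{g_+}=1$ cannot be a second application of Gabriel's equivalence, which is stated over a field of characteristic zero.

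The missing idea is the mechanism the paper uses to globalize multiplicativity. The paper works with the larger infinitesimal formal group $\ff\AutL_{\Q}$ (elements $t+\sum_i c_it^i$ with all $c_i$ nilpotent, almost all zero), splits the extension there by the Lie-algebra hypothesis, arranges the two sections to agree on $\AutLp_{\Q}\cap\ff\AutL_{\Q}$ (using that the Lie algebra spanned by $L_n$, $n\ge -1$, is perfect), and then considers the defect $\beta(g_1,g_2)=q(g_1)q(g_2)q(g_1g_2)^{-1}$ as a morphism of ind-schemes $\AutL_{\Q}\times\AutL_{\Q}\to{\GG_m}_{\Q}$. Since $\beta$ restricts to the constant $1$ on $\ff\AutL_{\Q}\times\ff\AutL_{\Q}$ and the restriction map $\gamma^*$ on rings of regular functions is injective (an explicit check from formulas~\eqref{ind-sch} and~\eqref{ind-pr}), $\beta\equiv 1$. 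This ``formal density'' argument is what replaces your pointwise conjugation computation, and it is essential: restricting only to $\AutLm_{\Q}\times\AutLm_{\Q}$ would not give an injective $\gamma^*$, which is why the full formal group $\ff\AutL_{\Q}$, and not just $\AutLm_{\Q}$, must enter the proof. To repair your argument you would need either this density step or an independent proof that $\Lambda_{\sigma}$ vanishes on $g_+\AutLm_{\Q}g_+^{-1}$, which amounts to the same thing.
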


\begin{proof}
Let we have a central extension of group functors
\begin{equation}  \label{CE}
1 \lrto {\GG_m}_{\Q}  \lrto  H  \stackrel{\pi}{\lrto}  \AutL_{\Q}  \lrto  1
\end{equation}
such that there is a section $p$ of morphism $\pi$ (and, in general, this section is not a morphism of group functors). Then it is enough to prove that
if the corresponding central extension of Lie algebras over $\Q$ is isomorphic to the trivial central extension and the restriction of  central extension~\eqref{CE}
to the group ind-scheme  $\AutLp_{\Q}$ is isomorphic to the trivial central extension, then   central extension~\eqref{CE} is isomorphic to the trivial central extension. We will prove this statement in several steps.

{\em Step $1$.}
Central extension~\ref{CE} and a section $p$
 give the $2$-cocycle
 $$K \; : \;  \AutL_{\Q}  \times \AutL_{\Q}   \to {\GG_m}_{\Q}   \, \mbox{,}$$
 which is a morphism of ind-schemes. By changing the section
 $p$ to the new section ${p'=p \cdot p(e)^{-1}}$, where $e$ is the identity element  of the group ind-scheme  $ \AutL_{\Q}$, we see that $p'(e)$
 is the identity element of the group ind-scheme $H$
 and therefore the new $2$-cocycle $K'$ constructed by $p'$ satisfies $K'(e,e)=1$, where $1$ is the identity element of the group scheme $\GG_m$.

 We consider the restriction of central extension~\eqref{CE} to the group ind-scheme $\ff\AutL_{\Q}$:
 \begin{equation}  \label{Centr2}
 1 \lrto {\GG_m}_{\Q} \lrto \pi^{-1}(\ff\AutL_{\Q})  \stackrel{\tau}{\lrto} \ff\AutL_{\Q}  \lrto 1 \, \mbox{.}
 \end{equation}
 From the condition $K'(e,e)=1$ and by the definition  of  $\ff\AutL_{\Q}$,   we obtain that for a commutative $\Q$-algebra $A'$ such that
 ${\rm Nil}(A') =0$ the $2$-cocycle $K'$ restricted to
 $${\ff\AutL_{\Q}(A')  \times \ff\AutL_{\Q}(A') \, =  \, e \times e}$$
  equals $1 \in {A'}^*$.
 Therefore, by considering the homomorphism $A \to A/ {\rm Nil} A$, we obtain that the $2$-cocycle $K'$ restricted to ${\ff\AutL_{\Q}  \times \ff\AutL_{\Q}}$ takes values in the group ind-scheme
 ${\widehat{{\GG_m}_{\Q}}}$, where  ${\widehat{{\GG_m}_{\Q}}(A) = 1 + {\rm Nil}(A) \subset A^* }$
 for any commutative \linebreak $\Q$-algebra $A$.
 Thus, ${\widehat{{\GG_m}_{\Q}}
  =  1 + \widehat{{\GG_a}_{\Q}} }$, where the group ind-scheme \linebreak
 ${\widehat{{\GG_a}_{\Q}} = \mbox{``$\varinjlim\limits_{n \ge 0}$''} \Spec \Q[t]/ t^n}$.

 Therefore central extension~\eqref{Centr2} comes from another central extension
 \begin{equation}  \label{Centr3}
 1 \lrto \widehat{{\GG_m}_{\Q}} \lrto T \stackrel{\kappa}{\lrto} \ff\AutL_{\Q}  \lrto 1 \, \mbox{,}
 \end{equation}
   by means of the natural morphism $\widehat{{\GG_m}_{\Q}} \to {\GG_m}_{\Q}$. Since there is (in general, non-group) section of $\kappa$, we have that   $T \simeq \widehat{{\GG_m}_{\Q}} \times \ff\AutL_{\Q} $ as an ind-scheme. Since $\widehat{{\GG_m}_{\Q}}$,  $\ff\AutL_{\Q}$ and $T$ are   infinitesimal  formal groups  over $\Q$, the isomorphism class of
  central extension~\eqref{Centr3} is uniquely defined by the  isomorphism class of corresponding central Lie algebra extension, i.e. by
  the image of
  the element  from  $H^2(\Lie \AutL (\Q), \Q) $. By our condition, the last element equals zero. Therefore, $ T \simeq \widehat{{\GG_m}_{\Q}} \times \ff\AutL_{\Q}$ as a group ind-scheme (where the last decomposition of group ind-schemes may differ from the decomposition of ind-schemes written just above).
  Hence there is a morphism of group ind-schemes
  $$s \, :  \, \ff\AutL_{\Q}  \lrto \pi^{-1}(\ff\AutL_{\Q})$$ that is a group section of $\tau$.

  {\em Step $2$.}
  By our condition, central extension~\eqref{CE} restricted to $\AutLp_{\Q}$ is split. This means that there is a group section of morphism $\pi$ over $\AutLp_{\Q}$:
  $$
  r \; : \; \AutLp_{\Q}  \lrto \pi^{-1}(\AutLp_{\Q})  \, \mbox{.}
  $$

  By sections $r$ and $s$ we construct now a section $q$ of morphism $\pi$.
  By Proposition~\ref{Decomp}, for any commutative $\Q$-algebra $A$ and any element $g \in \AutL(A)$ there is a unique decomposition
  $g = g_{+} \cdot g_{-} $, where $g_+ \in \AutLp(A)$ and $g_{-} \in \AutLm(A)$. We define
  $$q(g)= r (g_+) \cdot s(g_-)  \in H(A) \, \mbox{,}$$
  where we use that $g_- \in \ff \AutL_{\Q}(A)$.

  We will prove that $q$ restricted to $\ff \AutL$ coincides with $s$. Suppose that $g$ is from  $\ff \AutL_{\Q}(A)$.
  Then we have that $g_-$ and $g_+ = g \cdot g_{-}^{-1}$
  are from    $\ff \AutL_{\Q}(A)$. Since $s$ is a group morphism, it is enough to prove that $r =s$
 on the group ind-scheme $V$, where
   $$ V{ (A ) = \AutLp_{\Q}(A) \,  \cap  \, \ff\AutL_{\Q}(A)}$$
  for
 any   commutative $\Q$-algebra $A$  and the intersection is taken inside $\AutL_{\Q}(A)$. We consider the group morphism
 $$
 r/s \; : \; V \lrto {\GG_m}_{\Q}  \, \mbox{.}
 $$
 As in Step $1$, by considering the homomorphism $A \to A/ {\rm Nil} A$, we obtain that $r/s$ takes values in $\widehat{{\GG_m}_{\Q}}$, i.e. we have
 $$
 r/s \; : \; V \lrto \widehat{{\GG_m}_{\Q}}  \, \mbox{.}
 $$

It is easy to see that $V$ is an infinitesimal formal group (to represent $V$ it is enough to take in formula~\eqref{ind-pr} only indices with $i \ge 0$). Therefore the group morphism $r/s$ is uniquely defined by the corresponding homomorphism of Lie algebras over $\Q$:
$$
\Lie (r/s) \; : \; \Lie V (\Q) \lrto \Q = \GG_a(\Q)=\Lie \widehat{{\GG_m}_{\Q}}(\Q) \, \mbox{.}
$$
It is easy to see that the Lie algebra $\Lie V (\Q)$ has a basis $L_n$, where $n \ge -1$, over $\Q$, and the Lie bracket is
$[L_n, L_m]= (m-n)L_{n+m}$. Therefore we have that
$$[\Lie V (\Q), \, \Lie V (\Q)] = \Lie V (\Q) \mbox{.} $$
Since $\GG_a$ is an Abelian Lie algebra, we obtain that $\Lie (r/s) =0$. Therefore $r/s = 1$. Hence $r =s$ on $V$, and we obtain
 that $q$ restricted to $\ff \AutL$ coincides with~$s$.

{\em Step 3.} We prove now that the section $q$ is a group section, i.e. it is a morphism of group ind-schemes.

We consider a morphism of ind-schemes:
$$
\beta \; : \; \AutL_{\Q}  \times \AutL_{\Q}  \lrto {\GG_m}_{\Q}   \, \mbox{,}
$$
where for any commutative $\Q$-algebra $A$  and any elements  $g_1$ and $g_2$  from $\AutL(A)$,
 by definition,
 $$
 \beta(g_1 \times g_2) = q(g_1) \cdot q(g_2)   \cdot q(g_1g_2)^{-1}  \in A^*  \, \mbox{.}
 $$

We have the natural morphism
$$ U =\ff \AutL_{\Q}  \times \ff \AutL_{\Q}  \stackrel{\gamma}{\lrto}  \AutL_{\Q}  \times \AutL_{\Q} \, \mbox{.} $$
We note that the composition of morphisms   $\beta \cdot \gamma$  is a constant morphism from $U$ to ${\GG_m}_{\Q}$ that is  equal to $1$, because $q$ restricted to
$\ff \AutL$ coincides with~$s$, and $s$ is a group morphism.

The composition of morphisms $\beta \cdot \gamma$  corresponds to the composition of  homomorphisms of $\Q$-algebras  of regular functions
$$
\Q[x, x^{-1}]  \stackrel{\beta^*}{\lrto}  \oo \left(\AutL_{\Q}  \times \AutL_{\Q} \right)  \stackrel{\gamma^*}{\lrto}  \oo(U) \, \mbox{.}
$$
Now we have $\gamma^* \beta^* = \gamma^* {{\mathbf 1}^*}$, where $\mathbf{1}$ is the constant morphism that is equal to $1$. Therefore
$\beta^* = {\mathbf 1}^*$ and
$\beta = {\mathbf 1}$,
because $\gamma^*$ is an embedding. The last statement follows from the explicit forms of $\Q$-algebras of regular functions $\oo(\AutL_{\Q})$ and
$\oo(\ff \AutL_{\Q})$, see formula~\eqref{ind-sch}, where we have to change the ground ring $\dz$ to the ground field $\Q$, and formula~\eqref{ind-pr}. Now $\gamma^*$ is the embedding of the  $\Q$-algebra of mixture of  polynomials, Laurent polynomials and formal power series in infinite number of variables to the $\Q$-algebra of formal power series in infinite number of variables, where in notation of formulas~\eqref{ind-sch} and~\eqref{ind-pr} we have that $a_i \mapsto c_i$, where $i \ne 1$, and $a_1 \mapsto 1 + c_1$.

Thus, since $\beta = {\mathbf 1}$, we obtain that $q$ is a group section. Therefore central extension~\eqref{CE}
is isomorphic to the trivial central extension.

\end{proof}

\subsection{Comparison of two central extensions given by $\hat{B}$ and $D$}   \label{Sec-main}

As corollary of Theorem~\ref{Th-2-coc} we obtain the following theorem
\begin{Th}  \label{theor-3}
In the group $H^2(\AutL_{\Q}, {\GG_m}_{\Q})$ we have
$$
D^{12}= \hat{B}    \, \mbox{,}
$$
where $\hat{B}$ is the formal Bott-Thurston cocycle and $2$-cocycle $D$ defines the determinantal central extension.
\end{Th}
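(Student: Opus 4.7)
The plan is to apply Theorem~\ref{Th-2-coc}: since both $D$ and $\hat{B}$ arise from honest central extensions of group ind-schemes (the determinantal extension~\eqref{det-cent-ext} and the formal Bott-Thurston extension, respectively), the trivial-action hypothesis is in force, and it suffices to verify two things: that $D^{12}$ and $\hat{B}$ have the same image in $H^2(\Lie \AutL(\Q), \Q)$, and that they have the same restriction in $H^2(\AutLp_{\Q}, {\GG_m}_{\Q})$.

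The Lie algebra check is immediate. The explicit formula from Proposition~\ref{ex-form-coc} for the $2$-cocycle induced on the Lie algebra is such that raising a $\GG_m$-valued group cocycle $K$ pointwise to the $12$th power multiplies $\Lie K$ by $12$ (because $(1 + x \ve_1 \ve_2)^{12} = 1 + 12 x \ve_1 \ve_2$ in the truncated ring). Theorem~\ref{theor-1} then gives
$$
\Lie(D^{12}) = 12\, \Lie D = \Lie \hat{B},
$$
as an equality of $2$-cocycles, not merely of cohomology classes.

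For the restriction to $\AutLp_{\Q}$, I would show that both $D$ and $\hat{B}$ restrict to the identically-$1$ cocycle. For any $\varphi \in \AutLp(A)$, the nilpotence of $\widetilde{\varphi}(0)$ forces $\varphi(A[[t]]) \subseteq A[[t]]$, so $c_\varphi = 0$ in the block notation of~\eqref{matr}. For $f, g \in \AutLp(A)$ the identity $d_{fg} = c_f b_g + d_f d_g$ then collapses to $d_{fg} = d_f d_g$, whence $D(f,g) = \det({\rm id}) = 1$. On the other side, for $\varphi_1, \varphi_2 \in \AutLp(A)$ both arguments $\widetilde{\varphi_1}'$ and $\widetilde{\varphi_2}' \circ \widetilde{\varphi_1}$ of $\CC$ in~\eqref{FBT} lie in $A[[t]]^*$ (the constant terms are invertible because $a_1 \in A^*$, and no truly negative powers of $t$ appear); inspection of formula~\eqref{CC-form} then shows that on $A[[t]]^* \times A[[t]]^*$ one has $\nu = 0$ on each argument and all factors indexed by negative powers of $t$ vanish, so $\CC$ is identically $1$. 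Hence $D^{12}|_{\AutLp_{\Q}} = \hat{B}|_{\AutLp_{\Q}} = 1$ on the nose.

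Both hypotheses of Theorem~\ref{Th-2-coc} being verified, we conclude $D^{12} = \hat{B}$ in $H^2(\AutL_{\Q}, {\GG_m}_{\Q})$. The genuine work lies upstream in Theorems~\ref{theor-1} and~\ref{Th-2-coc}; given those, the only potentially delicate point here is confirming that both cocycles restrict literally to the identity on $\AutLp_{\Q}$, which the above matrix-block and $\CC$-formula computations dispatch.
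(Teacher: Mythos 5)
Your proof is correct and follows essentially the same route as the paper: equality of the induced Lie-algebra cocycles via Theorem~\ref{theor-1}, triviality of both restrictions to $\AutLp_{\Q}$, and then Theorem~\ref{Th-2-coc}. (One cosmetic slip: $\varphi(A[[t]])\subseteq A[[t]]$ means $b_\varphi=0$, not $c_\varphi=0$, in the block notation of~\eqref{matr}, but the cross term $c_f\cdot b_g$ vanishes either way, so your conclusion $d_{fg}=d_f\cdot d_g$ stands.)
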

\begin{proof}
By Theorem~\eqref{theor-1}  we have that $12 \Lie D = \Lie \hat{B} $. Hence we obtain that this equality  is true after the application to the Lie algebra $\Lie \ff \AutL_{\Q}(\Q) $.
By formulas~\eqref{FBT}, \eqref{CC-form}  (or, simpler, for $\Q$-algebras, by~\eqref{CC-exp-log} ) and by~\eqref{coc-D}, $2$-cocycles $\hat{B}$ and $D$ restricted to $\AutLp \times \AutLp$ are the constant morphisms that are equal to $1$.
Now we apply Theorem~\ref{Th-2-coc}.
\end{proof}

\begin{nt} \label{rem-Q} \em
There is  an interesting question. Is the statement of Theorem~\ref{theor-3} still true in the group $H^2(\AutL, {\GG_m})$,
i.e. when we consider any commutative rings $A$, not only $\Q$-algebras?
\end{nt}

\begin{nt} \label{Quest-2} \em
There is another interesting question. In the proof of Theorem~\ref{theor-3} (which is based on Theorem~\ref{Th-2-coc})
we obtained only that $2$-cocycles $\hat{B}$ and $D$ equal modulo the group of  $2$-coboundaries. Is it true that $\hat{B} = D$ as morphisms of group ind-schemes defined over~$\Q$?

We note that if this is true, then $\hat{B} =D$ for any commutative rings $A$, which are not necessarily $\mathbb Q$-algebras, because these $2$-cocycles are morphisms from the ind-scheme $\AutL \times \AutL$, which is the inductive limit of flat over $\dz$ schemes. This would also give the positive answer for the question in Remark~\ref{rem-Q}.
\end{nt}

\begin{nt} \em
In~\cite[pages~331-332]{Se} it is claimed an analog of Theorem~\ref{theor-3} for central extensions of the group ${\rm Diff} \, S^1$. This analog is not true in~\cite{Se}.
Namely, G.~Segal compares the Bott-Thurston $2$-cocycle (which is written a little bit in another way than in Section~\ref{BT}, since usually one supposes that the group ${\rm Diff} \, S^1$ acts on $S^1$ on the left) and the $2$-cocycle coming from the determinantal central extension of ${\rm Diff} \, S^1$, see also~\cite[\S~6.8]{PS}. For this goal he uses an infinite-dimensional Lie group  analog of Theorem~\ref{Th-2-coc}. G.~Segal explicitly calculates that the corresponding  $2$-cocycles on the Lie algebra of vector fields ${\rm Vect} \, S^1$ are proportional and find  the proportionality  coefficient. Then, by construction, the determinantal central extension restricted to the subgroup ${\rm PSL}(2, \dr)$, which is the group of holomorphic automorphisms of the unit disk in $\dc$, is trivial.
But the Bott-Thurston cocycle restricted to ${\rm PSL}(2, \dr)$ gives a non-trivial central extension, see Appendix
written by R.~Brooks in~\cite{B2} and page~229 in \S~4.5.1 of~\cite{GR}. This contradicts to what is written in~\cite{Se}!

\end{nt}

\section{Ringed spaces, gerbes and part of formal Riemann-Roch theorem}  \label{sec-ringed}

\subsection{$\oo((t))$-spaces}  \label{sec-spaces}

In this section  we give the implication of Theorem~\ref{theor-3} for the part of formal Riemann-Roch theorem  (see Theorem~\ref{theor-4}) for some ringed space
via equality of some cohomology classes in  the  \v{C}ech cohomology group $\check{H}^2(S, \oo_S^*)$   for a separated scheme $S$ over $\Q$.

For any scheme $V$ by $\oo_V((t))$ we denote the sheaf of $\oo_V$-algebras on $V$  associated with the presheaf
$U \mapsto \oo_V(U) ((t))$, where $U \subset V$ is an open subset. Since any affine scheme is quasi-compact, for any open affine subscheme
$W \subset V$ we have ${\oo_V((t)) (W) = \oo_V(W)((t))}$.

For any open cover $S = \bigcup_{i \in I} U_i $, denote $U_{i_0 \ldots i_k}= U_{i_0} \cap \ldots \cap U_{i_k}$, where $i_0, \ldots, i_k \in I$.

\begin{defin}  \label{def2}
Let $S$ be any scheme. By $\oo((t))$-space $\Theta$ over $S $ we call  a ringed space $(S, \M)$, where $\M$
is the sheaf of $\oo_S$-algebras on $S$, together with the equivalence class of the  following data.
There is an open cover $S = \bigcup_{i \in I} U_i $  such that for any $i \in I$
there is an isomorphism
\begin{equation}  \label{good-cov}
\phi_i \; : \; \M |_{U_i} \lrto \oo_{U_i} ((t))
\end{equation}
of sheaves of $\oo_{U_i}$-algebras
such that for any $i, j \in I$ and any open affine subscheme ${V \subset U_{ij}}$ the (transition)  automorphism
$ \phi_{ij}= \phi_i \phi_j^{-1}$ of the sheaf $\oo_{U_{ij}} ((t))$ on $U_{ij}$
induces the continuous automorphism of $\oo_S(V)$-algebra $ \oo_S(V) ((t))$.
Two such data are equivalent if their union is again such a data.
\end{defin}

\begin{nt} \em
Locally on $S$ an  $\oo((t))$-space corresponds to the punctured formal neighbourhood of a section of a  smooth morphism to $U$ of relative dimension $1$, where an open subset $U \subset S$
(see~\cite[Corollary 16.9.9, Theorem 17.12.1]{EGA4}).
\end{nt}

Now we have the Picard group
$${\rm Pic} ((S, \M))=  \check{H}^1(S, \M^*) = H^1(S, \M^*)$$
of isomorphism classes of
locally free
 sheaves of $\M$-modules of rank $1$ on the topological space $S$. (Here by $\check{H}^*(\cdot, \cdot)$ we denote the \v{C}ech cohomology.)
 For any locally free
 sheaf $\ff$ of $\M$-modules of rank $1$ on the topological space $S$ by $c_1(\ff)$
 we denote the class of this sheaf in ${\rm Pic} ((S, \M))$.

Let $\ff$ and $\g$ be two locally free
 sheaves of $\M$-modules of rank $1$ on the topological space $S$. We consider an open cover $S = \bigcup_{i \in I} U_i$
of $S$ such that $\ff |_{U_i} \simeq  \g |_{U_i} \simeq \M|_{U_i}$ for any $i \in I$.

For any $i, j \in I$ let $f_{ij}$  and $g_{ij}$ be elements from $\M^*(U_{ij})$
that give \v{C}ech $1$-cocycles for the sheaves $\ff$ and $\g$ correspondingly.

We have the \v{C}ech $2$-cocycle for the sheaf $\M^* \otimes_{\mathbb Z} \M^*$ on $S$ given for any $i,j,k \in I$ by the element
\begin{equation} \label{cup-Cech}
f_{ij} \otimes g_{jk}
\in \mathop{\M^* \otimes_{\mathbb Z} \M^*} (U_{i j k})  \, \mbox{,}
\end{equation}
where we consider the images of elements  $f_{ij}$ and  $g_{jk}$ in $\M^* (U_{ijk})$.

By  $K_2^M(\M)$ we denote the sheaf on $S$ associated with the presheaf ${U \mapsto K_2^M(\M(U)))}$.

Now we have the composition of the following maps
\begin{equation}  \label{K2-seq}
 H^1(S, \M^*) \otimes_{\mathbb Z} H^1(S, \M^*)  \lrto \check{H}^2(S,\M^* \otimes_{\mathbb Z} \M^*) \lrto \check{H}^2(S, K_2^M(\M))  \hookrightarrow  H^2(S, K_2^M(\M))  \, \mbox{,}
\end{equation}
where the first map is the $\cup$-product given on \v{C}ech cocycles by  formula~\eqref{cup-Cech}, and the second map is induced be the natural homomorphism of sheaves
$ \M^* \otimes_{\mathbb Z} \M^*  \lrto K_2^M(\M)$.

\begin{nt} \em
On a smooth algebraic surface $X$ over a field $k$, by the results of S.~Bloch and K.~Kato (see~\cite{Bl} and~\cite{Ka}) we have
$$
H^2(X, K_2(\oo_X)) = H^2(X, K_2^M(\oo_X)) = {\rm CH}^2(X)  \, \mbox{.}
$$
Then the formula like  formula~\eqref{K2-seq}  gives the intersection of divisors on $X$ (see~\cite[Remark~2.14]{Bl}):
$$
{\rm Pic} X   \otimes_{\mathbb Z} {\rm Pic} X \lrto {\rm CH}^2(X)  \, \mbox{.}
$$
\end{nt}

\subsection{Explicit \v{C}ech $2$-cocycles}  \label{sec-coc}

For any $\oo((t))$-space $\Theta = (S, \M)$ over $S$ we have the natural homomorphism ${\M^* \otimes_{\Z}  \M^*  \to \oo_S^*}$  of sheaves of Abelian groups  on $S$
induced by the Contou-Carr\`{e}re symbol $\rm CC$ in the following way. For any open $U_i \subset S$ and isomorphism~$\phi_i$ from~\eqref{good-cov},
we consider the composition of homomorphisms of sheaves of Abelian groups
$$ \M^* \otimes_{\Z} \M^* |_{U_i}  \lrto  \oo_{U_i}((t))^* \otimes_{\Z} \oo_{U_i}((t))^*  \lrto   K_2^M(\oo_{U_i} ((t))) \lrto \oo_{U_i}^*  \, \mbox{,} $$
 where the the first homomorphism  is induced by $\phi_i$, and the composition of the second and the third homomorphisms is induced by the Contou-Carr\`{e}re symbol $\rm CC$.
 These maps are glued together   correctly on $S$, since the Contou-Carr\`{e}re symbol is invariant
under the continuous automorphisms of the $\oo_S(U_{ij})$-algebra  $\oo_S(U_{ij}) ((t))$. The constructed homomorphism of sheaves induces the homomorphism $\partial$ given as composition of maps
$$
\partial \; : \;
\check{H}^2(S, \M^* \otimes_{\Z} \M^*) \lrto
\check{H}^2(S, K_2^M(\M))  \lrto \check{H}^2(S, \oo_S^*)  \, \mbox{, }
$$
which does not depend on the choice of the open cover $\{ U_i \}$ and the isomorphisms $\phi_i$ from the equivalence class of data.

\medskip

For any $\oo((t))$-space $\Theta = (S, \M)$ over $S$ we consider the sheaf $\Omega^1_{\M / \oo_S}$ of relative differential forms of $\Theta$ over $S$.
We will define  the quotient sheaf $\widetilde{\Omega}^1_{\Theta} = \Omega^1_{\M / \oo_S} / {\mathcal N}$ that will be a locally free
 sheaves of $\M$-modules of rank $1$ on the topological space $S$.
For any open $U_i \subset S$ and isomorphism~$\phi_i$ from~\eqref{good-cov} we define ${\mathcal N} |_{U_i}$ as the kernel of the composition of the following maps
$$
{\Omega}^1_{\M |_{U_i} / \oo_{U_i}}  \lrto {\Omega}^1_{\oo(U_i)((t)) / \oo_{U_i}}  \lrto
\widetilde{\Omega}^1_{\oo(U_i) ((t))}   \, \mbox{,}
$$
where the first map is induced by $\phi_i$ and the second map comes from formula~\eqref{form-ker}. The sheaves ${\mathcal N} |_{U_i}$ on $U_i$ are glued together to the sheaf ${\mathcal N}$ on $S$, which does not depend on the choice of the open cover $\{ U_i \}$ and the isomorphisms $\phi_i$
from the equivalence class of data.

For a $\oo((t))$-space $\Theta$ over $S$ we will be interested in the following element
\begin{equation}  \label{expl}
\partial \, (c_1(\widetilde{\Omega}^1_{\Theta}) \cup c_1(\widetilde{\Omega}^1_{\Theta}) )  \, \in \, \check{H}^2(S, \oo_S^*) \, \mbox{,}
\end{equation}
whose cohomology class depends only on $\Theta$.

\begin{nt}  \em  \label{dif-coc}
For a fixed open cover $\{ U_i\}$ and a data as  in~\eqref{good-cov},  we constructed the  explicit the \v{C}ech $2$-cocycle in formula~\eqref{expl}, since for any $U_i$ there is the natural section $d(\phi_i^{-1}(t))$
of  $\widetilde{\Omega}^1_{\Theta} |_{U_i}$, and this  gives the natural \v{C}ech  $1$-cocycle for $\widetilde{\Omega}^1_{\Theta}$. Besides,  the $\mbox{$\cup$-product}$ is given by formula~\eqref{cup-Cech}.
\end{nt}

\bigskip

We suppose now that $S$ is a separated scheme. Then the intersection of two open affine subschemes in $S$ is again an open affine subscheme.

For a $\oo((t))$-space $\Theta$ over $S$ we will  construct a natural determinantal $\oo_S^*$-gerbe ${\mathcal Det}(\Theta)$, which is a  locally connected sheaf of
$\oo_S^*$-groupoids with the descent properties (see  more on gerbes, for example, in~\cite[\S~1]{KV} and references therein,   and see also the similar   construction of  $\oo_S^*$-gerbe in~\cite[\S~3]{KV}).

First, we construct a  locally connected sheaf of
$\oo_S^*$-groupoids $\widetilde{{\mathcal Det}}(\Theta)$.
By definition, for an open $U \subset S$ the objects of the category $\widetilde{{\mathcal Det}}(\Theta) (U)$
are $\oo_U$-modules subsheaves $\RR$ of $\M |_U$ such that for any $i \in I $ (see formula~\eqref{good-cov}) and any  affine open subscheme $V \subset U_i \cap U$ we have
that $\phi_i (\RR (V)) $ coincides with the image of $\oo_S(V)[[t]]$ under some $\oo_S(V)$-module homeomorphism of $\oo_S(V)((t))$.
Now for any two objects $\RR_1$ and $\RR_2$ of  $\widetilde{{\mathcal Det}}(\Theta) (U)$ we define the $\oo_U^*$-torsor as the $\oo_U^*$-torsor that corresponds
to the line bundle given by the projective $\oo_S(V)$-module  ${ \det (\phi_i(\RR_1(V)) \mid  \phi_i(\RR_2(V)))}$ for any open affine  subscheme $V \subset U_i \cap U$ and any $i \in I$.
The set of all sections over $U$ of the constructed $\oo_U^*$-torsor is $\Hom (\RR_1, \RR_2) $ in the category $\widetilde{{\mathcal Det}}(\Theta) (U)$.

Thus we obtained a  locally connected sheaf of $\oo_S^*$-groupoids $\widetilde{{\mathcal Det}}(\Theta)$, which is a pre-stack. The corresponding associated stack is  ${\mathcal Det}(\Theta)$, which is  an $\oo_S^*$-gerbe. This gerbe does not depend on the choice of the covering $\{ U_i \}$ and the isomorphisms $\phi_i$
from the equivalence class of data.

By the gerbe ${\mathcal Det}(\Theta)$ we will construct its class in $\check{H}^2(S, \oo_S^*)$. Fix a data in~\eqref{good-cov}
such that all $U_i$ are affine open subschemes. For any $i \in I$ we denote ${\RR_i = \phi_i^{-1}(\oo_S(U_i)[[t]])}$, which is the object
from the category ${\mathcal Det}(\Theta) (U_i)$. By Proposition~\ref{d} and Corollary~\ref{Con}, for any $i, j \in I$ there is an element
$u_{ij }  \in \Hom (\RR_j |_{U_{i j}}, \RR_i |_{U_{i j}}) $. Now for any $i,j, k \in I$ we define an element from
$\oo_S^* (U_{i j k})$ by
\begin{equation}  \label{gerbe-2-cocycle}
h_{ijk} = u_{ik}^{-1} u_{ij} u_{jk}  \in \Hom (\RR_k |_{U_{i j k}}, \RR_k |_{U_{i j k}})  \, \mbox{.}
\end{equation}
This gives a \v{C}ech $2$-cocycle, and its class $[{\mathcal Det}(\Theta)]$ in  $\check{H}^2(S, \oo_S^*)$ depends only on $\Theta$ and does not depend on all the other choices  we have made (see the explanation in general case, e.g., in~\cite[\S~5.2]{Bry}).

\begin{nt} \label{specific} \em
We can take specific elements
$$
u_{ij}  \in \Hom (\RR_j |_{U_{i j}}, \RR_i |_{U_{i j}}) \simeq \det (\phi_{ij} (\oo_S(U_{ij})[[t]]) \mid \oo_S(U_{ij})[[t]])
$$
in formula~\eqref{gerbe-2-cocycle} that we used to obtain formula~\eqref{coc-D} and also used the dual to these elements in   the proof of Proposition~\ref{d}. These elements $u_{ij }$ depends on the choice of a data  in~\eqref{good-cov}.
\end{nt}

\medskip

\begin{Th}  \label{theor-4}
Let $\Theta$ be a $\oo((t))$-space over a separated scheme $S$ over $\Q$. Then in $\check{H}^2(S, \oo_S^*)$ we have an equality
$$
[{\mathcal Det}(\Theta)]^{12}  = \partial \, (c_1(\widetilde{\Omega}^1_{\Theta}) \cup c_1(\widetilde{\Omega}^1_{\Theta}) )    \, \mbox{.}
$$
(We use the multiplicative notation for the group law in $\check{H}^2(S, \oo_S^*)$.)
\end{Th}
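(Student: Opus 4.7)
The plan is to reduce the theorem to Theorem~\ref{theor-3} by interpreting both sides as \v{C}ech $2$-cocycles built from the transition automorphisms of $\Theta$. Fix an affine open cover $S = \bigcup_{i \in I} U_i$ together with trivializations $\phi_i$ as in Definition~\ref{def2}; on each double intersection $U_{ij}$ the composition $\phi_{ij} = \phi_i \phi_j^{-1}$ yields an element of $\AutL(\oo_S(U_{ij}))$, and on $U_{ijk}$ these satisfy $\phi_{ik} = \phi_{ij}\phi_{jk}$. Our target is to recognize the \v{C}ech $2$-cocycle representing $[{\mathcal Det}(\Theta)]^{12}$ as $D(\phi_{ij},\phi_{jk})^{12}$, and the \v{C}ech $2$-cocycle representing $\partial(c_1(\widetilde{\Omega}^1_{\Theta})\cup c_1(\widetilde{\Omega}^1_{\Theta}))$ as $\hat{B}(\phi_{ij},\phi_{jk})$.

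For the determinantal gerbe side, I would use Remark~\ref{specific}: take $\RR_i = \phi_i^{-1}(\oo_S(U_i)[[t]])$ and choose the morphisms $u_{ij}$ in $\Hom(\RR_j|_{U_{ij}},\RR_i|_{U_{ij}})$ provided by the natural section of $\widetilde{\AutL} \to \AutL$ that was used to define $D$ in Section~\ref{DCE}. Then formula~\eqref{gerbe-2-cocycle} computes $h_{ijk}$ exactly as $D(\phi_{ij},\phi_{jk})$, so $[{\mathcal Det}(\Theta)]$ is represented on the cover $\{U_i\}$ by the \v{C}ech $2$-cocycle $D(\phi_{ij},\phi_{jk})$, and hence $[{\mathcal Det}(\Theta)]^{12}$ by $D(\phi_{ij},\phi_{jk})^{12}$.

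For the cup-product side, observe that $\omega_i := d(\phi_i^{-1}(t))$ gives a natural local trivialization of $\widetilde{\Omega}^1_{\Theta}$ over $U_i$ (Remark~\ref{dif-coc}). Writing $\omega_j = \tau_{ij}\omega_i$ and applying $\phi_i$ shows $\phi_i(\tau_{ij}) = \widetilde{\phi_{ij}}' = \tau(\phi_{ij})$. The cup-product formula~\eqref{cup-Cech} then gives the $2$-cocycle $\tau_{ij}\otimes \tau_{jk}$ on $U_{ijk}$; transporting to $\oo_S(U_{ijk})((t))^*$ via $\phi_i$, the cocycle identity $\phi_{ik} = \phi_{ij}\phi_{jk}$ combined with the cocycle property $\tau(\phi_{ij}\phi_{jk}) = \tau(\phi_{ij})\cdot \phi_{ij}(\tau(\phi_{jk}))$ proved in Section~\ref{FB} yields $\phi_i(\tau_{jk}) = \phi_{ij}(\tau(\phi_{jk}))$. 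Applying the Contou-Carr\`ere symbol, the resulting \v{C}ech $2$-cocycle for $\partial(c_1\cup c_1)$ is precisely
\begin{equation*}
\CC\bigl(\tau(\phi_{ij}),\,\phi_{ij}(\tau(\phi_{jk}))\bigr) = \hat{B}(\phi_{ij},\phi_{jk}),
\end{equation*}
by the very definition~\eqref{FBT} of $\hat{B}$.

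With both sides identified in this way, I invoke Theorem~\ref{theor-3}: the equality $D^{12} = \hat{B}$ in $H^2(\AutL_{\Q},{\GG_m}_{\Q})$ means there exists a morphism of functors $\lambda \in \Hom(\AutL_{\Q},{\GG_m}_{\Q})$ such that $D^{12}\cdot \hat{B}^{-1} = \delta\lambda$ at the level of $2$-cochains on the group functor. Evaluating this universal $1$-cochain on the family $\{\phi_{ij}\}$ produces a \v{C}ech $1$-cochain $\lambda(\phi_{ij}) \in \oo_S(U_{ij})^*$ whose \v{C}ech coboundary, by the group-cocycle identity applied on $U_{ijk}$ and the relation $\phi_{ik}=\phi_{ij}\phi_{jk}$, is exactly $D(\phi_{ij},\phi_{jk})^{12}\cdot \hat{B}(\phi_{ij},\phi_{jk})^{-1}$. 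Hence the two \v{C}ech $2$-cocycles differ by a coboundary, yielding the required equality in $\check{H}^2(S,\oo_S^*)$. The only real subtlety---which is the main point one must check carefully---is the bookkeeping in the second step: matching the \v{C}ech cup-product formula~\eqref{cup-Cech} (which is a priori symmetric in the two cocycles) with the asymmetric formula for $\hat{B}$, and handling the sheafification correctly so that the choice of trivialization $\phi_i$ (versus $\phi_j$) used to express both $\tau_{ij}$ and $\tau_{jk}$ as elements of $\oo_S(U_{ijk})((t))^*$ produces the same Contou-Carr\`ere value modulo the invariance of $\CC$ under the $\AutL$-action.
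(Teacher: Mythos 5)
Your proposal is correct and follows essentially the same route as the paper's proof: identify the gerbe class with $D(\phi_{ij},\phi_{jk})$ via Remark~\ref{specific}, identify the cup-product side with $\hat{B}(\phi_{ij},\phi_{jk})$ via the $1$-cocycle $\tau$ and the sections $d(\phi_i^{-1}(t))$, and then transport the cohomological equality $D^{12}=\hat{B}$ of Theorem~\ref{theor-3} to \v{C}ech cohomology using the relation $\phi_{ik}=\phi_{ij}\phi_{jk}$. The only cosmetic difference is that the paper packages your final step as a cochain map $P$ from complex~\eqref{cochain-functor} to the \v{C}ech complex, whereas you evaluate the coboundary $\delta\lambda$ on the transition automorphisms directly; these are the same argument.
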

\begin{proof}
Fix a  data in~\eqref{good-cov}
such that all $U_i$ are affine open subschemes. This data defines the transition automorphisms ${\phi_{ij} : \oo_S(U_{ij})((t))  \lrto \oo_S(U_{ij})((t))  }$
for any ${i, j \in I}$.  Now by these transition automorphisms we will construct the cochain map $P$ from the complex~\eqref{cochain-functor} when
 $F = \AutL_{\Q}$  and  $G = {\GG_m}_{\Q}$ to the \v{C}ech complex for the sheaf $\oo_S^*$ with respect to the cover $\{ U_i \}$. (We note that according to Remark~\ref{over_R} we put $C^0(\AutL_{\Q}, {\GG_m}_{\Q}) = \Q^*$ in complex~\eqref{cochain-functor}). For any $i_0, \ldots, i_k \in I$
 and any ${c \in C^k(\AutL_{\Q}, {\GG_m}_{\Q})}$ we define
 $$
 P(c) = c (\phi_{i_0 i_1} |_{U_{i_0 \ldots i_k}}  , \phi_{i_1 i_2}  |_{U_{i_0 \ldots i_k}}  , \ldots , \phi_{i_{k-1} i_k} |_{U_{i_0 \ldots i_k}} ) \,  \in \, \oo_S^*(U_{i_0 \ldots i_k} )
 $$
 when $k \ge 1$. For $k=0$ we put the obvious identity map. It is easy to see that the map $P$ is a cochain  map of complexes. Now directly from the construction we obtain that $P$ maps the $2$-cocycle $D$ to the \v{C}ech $2$-cocycle obtained from $\oo_S^*$-gerbe ${\mathcal Det}(\Theta)$ by formula~\eqref{gerbe-2-cocycle}, where  we  take specific $u_{ij}$  as in Remark~\ref{specific}.

Now for any $1$-cocycle $f$ from $C^1(\AutL_{\Q}, \, {L \GG_m}_{\Q})$ and any $2$-cocycle $g$ from $C^2(\AutL_{\Q}, \, {L \GG_m}_{\Q}
 \otimes {L \GG_m}_{\Q})$ we have that
 \begin{equation}  \label{map-1-coc}
 U_{ij} \longmapsto \phi_i^{-1}(f(\phi_{ij})) \, \mbox{,} \qquad U_{ijk} \longmapsto  \phi_i^{-1}(g (\phi_{ij}, \phi_{jk}))
 \end{equation}
 are \v{C}ech  $1$- and $2$-cocycles
 for the sheaves $\M^*$ and $\M^* \otimes_{\dz} \M^*$ on $S$ correspondingly. Moreover, from explicit formulas we have that these maps of cocycles commute with the $\cup$-products.

Under the first map in formula~\eqref{map-1-coc}   the $1$-cocycle $\tau$ (see formula~\eqref{form-tau})  goes to  the \v{C}ech $1$-cocycle that gives the sheaf $\widetilde{\Omega}^1_{\Theta}$
  when we take $d (\phi_i^{-1}(t))$ as local sections for this sheaf. Hence and from explicit formula~\eqref{FBT} for the $2$-cocycle $\hat{B}$ we obtain that $P (\hat{B})$ is the \v{C}ech $2$-cocycle
  given by formula~\eqref{expl}  (which is written for \v{C}ech  cocycles by Remark~\ref{dif-coc}).

  Now we apply Theorem~\ref{theor-3} to finish the proof of the theorem.

\end{proof}

\begin{nt}  \em
By constructions, the left hand side and the right hand side of formula from Theorem~\ref{theor-4}  can be considered as  \v{C}ech $2$-cocycles when we fix the cover by affine open subschemes $S = \bigcup_{i \in I} U_i$ and a data as in~\eqref{good-cov}.
Then formula from Theorem~\ref{theor-4}  will be true for \v{C}ech $2$-cocycles  and for any separated $S$-scheme (not only for a scheme over $\Q$) if the answer for the question in Remark~\ref{Quest-2} will be positive.
\end{nt}

\begin{nt}  \em
It follows from Remark~\ref{Nilp}  that if   $S$ is  a reduced scheme, then
the \v{C}ech $2$-cocycle $\partial \, (c_1(\widetilde{\Omega}^1_{\Theta}) \cup c_1(\widetilde{\Omega}^1_{\Theta})$ constructed by a data~\eqref{good-cov}  is trivial (see
Remark~~\ref{dif-coc}).

\end{nt}

\begin{nt}  \em
Let $\pi \, :  \, X \to S$ be a smooth morphism of relative dimension $1$ between separated schemes.
Let $i : S \to X$ be a section of $\pi$. Then $\pi$ is a separated morphism  and $D = i(S)$ is a closed subscheme of $X$. Moreover, by~\cite[Corollary 16.9.9, Theorem 17.12.1]{EGA4}, $D$  is a Cartier divisor  on $X$.
We consider the sheaf
$$\M = \varinjlim_{n \in \mathbb{Z}} \, \varprojlim_{m <n } \oo_X(nD)/ \oo_X(mD)$$
of rings on $D$.  This sheaf corresponds to the punctured formal neighbourhood of $D$ on $X$. Since $S \simeq D$, we consider $\M$ as a sheaf on $S$.  Now $\Theta= (S, \M)$ is an $\oo((t))$-space over~$S$.
But we can take a data in~\eqref{good-cov} for $\Theta$ such that  both explicit \v{C}ech $2$-cocycles   on $S$ that correspond to classes $[{\mathcal Det}(\Theta)]$
and ${\partial \, (c_1(\widetilde{\Omega}^1_{\Theta}) \cup c_1(\widetilde{\Omega}^1_{\Theta}) )}$ will be trivial. The reason is that $\M$
contains the  subsheaf
$$ \varprojlim_{m <0 } \oo_X/ \oo_X(mD) $$
that is locally isomorphic on $S$ to $\oo_S[[t]]$.
This defines some data as in~\eqref{good-cov}.
Therefore, by construction, $[{\mathcal Det}(\Theta)]$ will be trivial. Besides, $\partial \, (c_1(\widetilde{\Omega}^1_{\Theta}) \cup c_1(\widetilde{\Omega}^1_{\Theta}) )$ will be trivial, since the transition automorphisms $\phi_{ij}$ in Definition~\ref{def2} will
come from the continuous automorphisms of the algebra of Taylor series over a ring, and then in the construction of $\partial$ the Contou-Carr\`ere symbol $\CC$ will be trivial.
\end{nt}

\section*{Appendix A. Lie algebra valued functors constructed from group functors}
In this Appendix we collected some statements on how to construct the Lie algebra valued functor from a group functor with some conditions  and how to construct the $2$-cocycle  on the Lie algebra valued functor from a corresponding $2$-cocycle on a group functor.

 Above in the paper, we applied  these statements  to the group functors represented by ind-schemes.

We fix a commutative ring $R$. All functors  which we will consider will be from the category of commutative {$R$-algebras}.
By $A$ we  denote an arbitrary commutative $R$-algebra.

In sections~A.1 and A.2 below we mainly follow, but with some additions, work~\cite[Expose II]{SGA3} written by M.~Demazure.

\setcounter{equation}{0}
\renewcommand{\theequation}{A.\arabic{equation}}
\setcounter{nt}{0}
\renewcommand{\thent}{A.\arabic{equation}}
\setcounter{prop}{0}
\renewcommand{\theprop}{A.\arabic{equation}}

\subsection*{A.1 \; Tangent spaces functors}

Let $G$ be a  (covariant) functor. Let $x \in G(R)$. Then the tangent space functor $TG_x$ of $G$ at $x$ is defined as
$$
TG_{x}(A) = \rho^{-1}(x) \, \mbox{,}
$$
where $\rho \, : \, G(A[\varepsilon]/ \varepsilon^2)  \to G(A)$ is the natural map, and we consider the image of $x$ in~$G(A)$.

For any free $A$-module $V$ of finite rank we consider the commutative
 $A$-algebra $I_V$ that is isomorphic to $ A \oplus V$ as an $A$-module and  the multiplication in $I_V$ is defined by the facts that the first direct summand
  $A$ is  $A \cdot 1$ and  the second direct summand $V$ is an ideal of the algebra $I_V$ with $V^2 = (0)$.
  For any $x \in G(R)$ we denote
$$
G(I_V)_{x} = \varrho^{-1}(x)  \, \mbox{,}
$$
where $\varrho \, : \,   G(I_V)   \to G(A)  $ is the natural map.   We note that $G(I_A)_x = TG_{x}(A)$.

We suppose that $G$ satisfies the following condition for any free $A$-modules $V_1$ and $V_2$ of finite ranks
\begin{equation}  \label{cond1}
G(I_{V_1 \oplus V_2})_{x}  \xrightarrow{\sim} G(I_{V_1})_x  \times G(I_{V_2})_x  \, \mbox{,}
\end{equation}
where the map is induced by the natural homomorphisms $I_{V_1 \oplus V_2}  \to I_{V_i}$.

In this case  $TG_x$
has a natural structure of ${\mathbb A}^1_R$-module, where ${\mathbb A}^1_R(A) = A$ (see Appendix to Lecture~4 in~\cite{Mum}
 when $A =R$ is a field, and see~\cite[Expose II, Def.~3.5, Prop.~3.6]{SGA3}   in the general case that has the same form).
 For any fixed  $\xi_1, \xi_2 \in G(I_A)_x$ and any  $\alpha, \beta \in A$, the result $\alpha \xi_1 + \beta \xi_2$
 comes from the following diagram
$$
G(I_A)_x \times G(I_A)_x  \xleftarrow{\sim}  G (I_{A \oplus A})_x \xrightarrow{\langle \alpha, \beta \rangle}  G(I_A)_x \, \mbox{,}
$$
where  the map $\langle \alpha, \beta  \rangle $ is induced by the $A$-module homomorphism
$(\gamma, \delta) \mapsto (\alpha \gamma + \beta \delta)$ from $A \oplus A$ to $A$. The image of element $\xi_1 \times \xi_2$ is equal, by definition, to $\alpha \xi_1 + \beta \xi_2$.

\begin{nt}  \em
By {\em an ind-scheme} we mean an ind-object of the category of schemes such that all transition maps in the ind-object are closed embeddings of schemes.
If a functor $G$ is represented by an ind-scheme over $R$, then condition~\eqref{cond1} is satisfied, since it is satisfied for schemes. Moreover, in this case $TG_{x}$ is the inductive limit of  tangent space functors to corresponding  schemes, and the ${\mathbb A}^1_R$-module structure on $TG_x$
induced by this inductive limit coincides with the ${\mathbb A}^1_R$-module structure described above.
\end{nt}

Now we will consider only group functors and we will suppose that they satisfy condition~\eqref{cond1}.

We note  that for a group functor  $G$, it is enough to check   condition~\eqref{cond1} only for $x=e$, where $e \in G(R)$ is the identity element.
Besides, the group structure on $G$ induces the group structure on $TG_e$, and this group structure coincides with the group structure coming from the structure of ${\mathbb A}^1_R$-module defined above, see Corollary~3 from Proposition 3.9 in~\cite[Expose II]{SGA3}.

Now according to  Definition~4.6 from~\cite[Expose II]{SGA3}, we say that {\em a group functor $G$ is good}, if additionally to condition~\eqref{cond1}
the functor $G$ satisfies the following condition
\begin{equation}  \label{cond2}
TG_e (A) \otimes_A I_V  \xrightarrow{\sim}  TG_e (I_V)  \,  \mbox{,}
\end{equation}
where $A$ is any commutative $R$-algebra and $V$ is any free $A$-module of finite rank. The map is induced by the natural embedding $A \hookrightarrow I_V$
and the $I_V$-module structure on~$TG_e (I_V)$.

\begin{nt}  \em
If a group functor $G$ is represented by a group ind-scheme, then $G$ is good, because a functor  represented by a scheme
satisfies condition~\eqref{cond1} and condition~\eqref{cond2} that can be formulated for any  functors satisfying condition~\eqref{cond1} .
\end{nt}

\subsection*{A.2 \; Lie bracket}

For a good group functor $G$ we denote $\Lie G = TG_e$.

Now we consider only good group functors. For good group functors it is possible to define the Lie bracket on $\Lie G$ in the same way as it is done before Proposition~4.8
 in~\cite[Expose II]{SGA3}. We have the following commutative diagram with exact columns and rows.
$$
\xymatrix{
 \Lie G (A) \ar@{^{(}->}[r]   \ar@{^{(}->}[d]  & \Lie G (A[\ve_1]/ (\ve_1^2))  \ar@{->>}[r]  \ar@{^{(}->}[d]  & \Lie G (A) \ar@{^{(}->}[d] \\
 \Lie G (A[\ve_2]/ (\ve^2)) \ar@{^{(}->}[r]  \ar@{->>}[d] &  G (A[\ve_1, \ve_2]/ (\ve_1^2, \ve_2^2) )  \ar@{->>}[d]_(0.4){\alpha} \ar@{->>}[r]^(0.55){\beta} & G(A[\ve_2]/ (\ve_2^2))  \ar@{->>}[d] \\
 \Lie G (A)  \ar@{^{(}->}[r] &   G (A[\ve_1]/ (\ve_1^2))  \ar@{->>}[r]  & G(A)
 }
$$
Here $\Lie G (A) \subset G(A[\ve_1 \ve_2]/ (\ve_1^2 \ve_2^2))$ in the left upper corner is canonically isomorphic to $\Ker \alpha \cap \Ker \beta$.

Now we take an element $f_1 \in \Lie G (A)  \subset  G (A[\ve_1]/ (\ve_1^2)) $ in the left bottom corner of the diagram
and an element $f_2 \in \Lie G (A)  \subset G (A[\ve_2]/ (\ve_2^2)) $ in the right upper corner of the diagram.
Using that all columns and rows of the diagram have canonical group splittings, we obtain that $f_1, f_2 \in G(A[\ve_1, \ve_2] / (\ve_1^2, \ve_2^2))$.
By definition, {\em the Lie bracket }
\begin{equation}  \label{ff}
[f_1, f_2] = f_1 f_2 f_1^{-1}  f_2^{-1}  \in  G (A[\ve_1, \ve_2] / (\ve_1^2, \ve_2^2))
\end{equation}
is the image of the element from $\Lie G (A)$ in the upper left corner under the embedding induced by the embedding of rings
$A [\ve_1 \ve_2] / (\ve_1^2 \ve_2^2)   \hookrightarrow A [\ve_1, \ve_2]/ (\ve_1^2, \ve_2^2)$.

The described bracket
$$[\cdot, \cdot] \; : \; \Lie G (A) \times \Lie G (A)  \lrto \Lie G (A)$$
is functorial with respect to $A$,
$A$-bilinear   and satisfies the Jacobi identity (see formulas (5) and (6)  after Lemma~5.11 in~\cite[\S~5]{Mi} for commutators in a group).

By construction, we have that $[f_1, f_2] + [f_2, f_1]= 0$ for any $f_1, f_2 \in \Lie G (A)$.

By {\em an ind-affine ind-scheme} we mean an ind-scheme  ${M= \mbox{``$\varinjlim\limits_{i \in I}$''} \Spec A_i}$. By \linebreak
${\oo (M)= \varprojlim\limits_{i \in I}  A_i}$ we denote the corresponding topological algebra of regular functions on $M$ (with the topology of projective limit, taking the discrete topology on each $A_i$).

If $G$ is represented by an ind-affine ind-scheme, then
$[f,f]=0$ for any $f \in \Lie G (A)$, because in this case the $A$-algebra $\Lie G (A)$ is isomorphic to the Lie $A$-algebra of all continuous left-translation-invariant
$A$-derivations of the topological $A$-algebra $\oo (G_A)$ of  regular functions on the ind-scheme $G_A$. (See Proposition~3.13, Theorem 4.1.4 and \S~4.11
in~\cite[Expose II]{SGA3} for the case of right-translation-invariant
$A$-derivations, and the $A$-algebra $\Lie G (A)$ is anti-isomorphic to the Lie $A$-algebra of such continuous derivations.)

Thus, in case when $G$ is represented by an ind-affine ind-scheme, the bracket $[ \cdot, \cdot ]$ on $\Lie G$ defines the  Lie $A$-algebra structure on the $A$-module
$\Lie G (A)$ for any commutative {$R$-algebra~$A$}.

\begin{nt} \em
It is easy to see also that $[f_1, f_2]$ does not depend on the choice of liftings of $f_1, f_2$ to $G (A[\ve_1, \ve_2] / (\ve_1^2, \ve_2^2))$.
\end{nt}

\subsection*{A.3  \; $2$-cocycles}

We consider a central extension of group functors
$$
1 \lrto F \lrto \widetilde{G} \stackrel{\pi}{\lrto} G \lrto 1
$$
such that there is a section $\sigma$ of $\pi$, but we don't demand that $\sigma$ is a group section. We suppose that $F$ and $G$ are represented by group ind-affine ind-schemes.
Then   $\widetilde{G}$ is also represented by a group ind-affine ind-scheme (where we use that there is the section~$\sigma$). The section $\sigma$  gives the $2$-cocycle $\Lambda$ on $G$ with values in $F$.

We obtain the corresponding central extension  of Lie algebra valued functors
$$
0 \lrto \Lie F \lrto \Lie \widetilde{G}  \lrto \Lie G \lrto 0 \, \mbox{,}
$$
and $\sigma$ induces the section $ \Lie G  \lrto \Lie \widetilde{G}$. In the usual way (analogously to the case of central extensions of group functors in the proof of Proposition~\ref{Ext-funct}), this section gives the Lie algebra  $2$-cocycle
$$\Lie \Lambda  \;  :   \;  \Lie G \times \Lie G \lrto \Lie F \, \mbox{,}$$
which can be explicitly obtained from the cocycle $\Lambda$  in the following way.

Let $h_1, h_2$ be from $\Lie G (A)$, where $A$ is any commutative $R$-algebra. We consider elements $h_i$ (where $i=1$ and $i=2$)
as elements of $ G ( A[\ve_1, \ve_2]/ (\ve_1^2, \ve_2^2) )$ through the embeddings
$$
h_i \in \Lie G (A) \subset G(A[\ve_i]/(\ve_i^2) )   \subset G (A[\ve_1, \ve_2]/ (\ve_1^2, \ve_2^2) )  \, \mbox{.}
$$

Then we have that
$$
\Upsilon (h_1, h_2) = \Lambda (h_1, h_2) \cdot \Lambda (h_2, h_1)^{-1}
$$
is an element from the Lie $A$-algebra
$$\Lie F(A)  \subset F (A[\ve_1 \ve_2]/ (\ve_1^2 \ve_2^2))  \subset F(A[\ve_1, \ve_2]/ (\ve_1^2, \ve_2^2)) \mbox{.} $$
This follows at once from the diagram from Section A.2 written for the functor  $F$ and from the identity  $\Lambda(1, y)= \Lambda(y,1)= \Lambda(1,1)$, which follows from the cocycle identity for~$\Lambda$.

The proof of the following proposition is similar to the proof of Proposition 2.6.1 from~\cite{GR} given in the  case of central extension of Lie groups.
In our case we have to work over the ring $A[\ve_1, \ve_2]/ (\ve_1^2, \ve_2^2)$. Besides, analogs of one parameter subgroups are
obtained from the functors over subrings $A[\ve_i]/ (\ve_i^2)$, and an analog of the mixed second-order partial derivative is the functor over the subring
$A[\ve_1 \ve_2]/(\ve_1^2 \ve_2^2)$.

\begin{prop}  \label{ex-form-coc}
We have that $\Lie \Lambda (h_1, h_2) = \Upsilon (h_2, h_2)$ for any $h_1, h_2$  from $\Lie G (A)$.
\end{prop}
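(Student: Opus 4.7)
The plan is to expand the group commutator $\sigma(h_1)\sigma(h_2)\sigma(h_1)^{-1}\sigma(h_2)^{-1}$ in $\widetilde{G}(C)$, where $C=A[\ve_1,\ve_2]/(\ve_1^2,\ve_2^2)$, by means of the group $2$-cocycle identity for $\Lambda$, then compare the result with the relation
$$
[\widetilde\sigma(h_1),\widetilde\sigma(h_2)]_{\widetilde G} \;=\; \Lie\Lambda(h_1,h_2)\cdot\widetilde\sigma([h_1,h_2]_{\Lie G})
$$
that defines $\Lie\Lambda$ as the cocycle of the Lie algebra central extension, and finally show that the discrepancy between the two sides is forced to vanish by the normalization of $\Lambda$ together with the bi-infinitesimal nature of the arguments. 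We may first arrange $\sigma(e)=1$ by rescaling $\sigma$ by the constant $\sigma(e)^{-1}\in F(R)$; this leaves both $\Upsilon$ and $\Lie\Lambda$ unchanged and forces $\Lambda(e,g)=\Lambda(g,e)=1$ for all $g$.

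Set $k=h_1h_2h_1^{-1}h_2^{-1}\in\Lie G(A)\subset G(C)$, so that $k=[h_1,h_2]_{\Lie G}$ by Section~A.2. Two applications of $\sigma(g_1)\sigma(g_2)=\Lambda(g_1,g_2)\sigma(g_1g_2)$, combined with centrality of $F$ in $\widetilde G$, give
$$
\sigma(h_1)\sigma(h_2)\sigma(h_1)^{-1}\sigma(h_2)^{-1} \;=\; \Upsilon(h_1,h_2)\cdot\sigma(h_1h_2)\sigma(h_2h_1)^{-1}.
$$
Using $h_1h_2=k\cdot h_2h_1$ and the cocycle formula once more, $\sigma(h_1h_2)=\Lambda(k,h_2h_1)^{-1}\,\widetilde\sigma(k)\,\sigma(h_2h_1)$, and therefore $\sigma(h_1h_2)\sigma(h_2h_1)^{-1}=\Lambda(k,h_2h_1)^{-1}\widetilde\sigma(k)$. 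Combining this with the defining relation for $\Lie\Lambda$ reduces the proposition to the identity $\Lambda(k,h_2h_1)=1$ in $F(C)$.

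To prove this remaining identity, we exploit representability of $F$ and $G$ by ind-affine ind-schemes. Passing to a finite level that accommodates both $k$ and $h_2h_1$, let $R[G]$ and $R[F]$ be the coordinate rings with augmentation ideals $I\subset R[G]$ and $J\subset R[F]$ at the identity sections. The normalization $\Lambda(e,g)=\Lambda(g,e)=1$, applied with universal $g$, forces the comorphism $\Lambda^{*}\colon R[F]\to R[G]\otimes_R R[G]$ to satisfy $\Lambda^{*}(J)\subset I\otimes_R I$ (using the splitting $R[G]=R\oplus I$). The point $k$ factors through $A[\ve_1\ve_2]/(\ve_1^2\ve_2^2)\subset C$ and reduces to $e$ modulo $\ve_1\ve_2$, so the corresponding ring homomorphism $\phi_k\colon R[G]\to C$ satisfies $\phi_k(I)\subset\ve_1\ve_2 A$; the point $h_2h_1$ reduces to $e$ under $C\to A$, so $\phi_{h_2h_1}(I)\subset\ve_1 A+\ve_2 A+\ve_1\ve_2 A$. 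In $C$ one has $(\ve_1\ve_2 A)\cdot(\ve_1 A+\ve_2 A+\ve_1\ve_2 A)=0$, so $(\phi_k\otimes\phi_{h_2h_1})(I\otimes_R I)=0$; applied to $\Lambda^{*}(J)$ this yields $\Lambda(k,h_2h_1)=1$.

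The main obstacle is precisely this last vanishing, which is the ring-theoretic incarnation, over $C$, of the classical Lie-group fact that a normalized group $2$-cocycle pairs trivially against a ``second-order infinitesimal'' (direction $\ve_1\ve_2$) in one slot and a ``first-order infinitesimal'' (directions $\ve_1,\ve_2$) in the other, as in Proposition~2.6.1 of \cite{GR}. Once this is in place, the remainder of the argument is formal: applications of the group $2$-cocycle identity, centrality of $F$ in $\widetilde G$, and the definitional relation between $\Lie\Lambda$ and the Lie bracket in $\Lie\widetilde G$.
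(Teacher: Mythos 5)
Your proof is correct. The paper itself gives no written argument for this proposition --- it only points to the analogous Lie-group statement (Proposition~2.6.1 of~\cite{GR}) and indicates that one should work over $A[\ve_1,\ve_2]/(\ve_1^2,\ve_2^2)$, with the subrings $A[\ve_i]/(\ve_i^2)$ playing the role of one-parameter subgroups and $A[\ve_1\ve_2]/(\ve_1^2\ve_2^2)$ the role of the mixed second-order derivative. Your expansion of the commutator $\sigma(h_1)\sigma(h_2)\sigma(h_1)^{-1}\sigma(h_2)^{-1}$ is exactly that adaptation, and the reduction to $\Lambda(k,h_2h_1)=1$ with $k=h_1h_2h_1^{-1}h_2^{-1}$ is clean. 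The one genuinely nontrivial step, which the paper's reference does not make explicit, is that vanishing, and your augmentation-ideal argument handles it correctly: the normalization $\Lambda(e,\cdot)=\Lambda(\cdot,e)=1$ forces $\Lambda^{*}(J)\subset I\otimes_R I$ via the splitting $R[G]=R\oplus I$, while the two evaluation maps send $I$ into $\ve_1\ve_2A$ and into $\ve_1A+\ve_2A+\ve_1\ve_2A$ respectively, whose product is zero in $C$; passing to a finite level of the ind-affine ind-scheme to make sense of the coordinate rings is legitimate since the relevant points and the identity all factor through a common term of the directed system. Two small remarks: the statement as printed contains a typo ($\Upsilon(h_2,h_2)$ should read $\Upsilon(h_1,h_2)$), and your proof establishes the intended identity; and the $A$-linearity of the induced section $\Lie\sigma$, which is needed for $\Lie\Lambda$ to be an honest Lie-algebra $2$-cocycle, holds automatically because the $\A^1_R$-module structure on tangent space functors is defined functorially in the test ring, so any morphism of functors preserving the identity is linear on tangent spaces.
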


\vspace{0.3cm}

\noindent Steklov Mathematical Institute of Russsian Academy of Sciences, 8 Gubkina St., Moscow 119991, Russia,
 {\em and}

 \noindent National Research University Higher School of Economics, Laboratory of Mirror Symmetry,  6 Usacheva str., Moscow 119048, Russia,
{\em and}

\noindent National University of Science and Technology ``MISiS'',  Leninsky Prospekt 4, Moscow  119049, Russia

\noindent {\it E-mail:}  ${d}_{-} osipov@mi{-}ras.ru$

\end{document}